\documentclass[12pt,reqno]{amsart}
\textwidth=14.5cm  \oddsidemargin=0.5cm
\usepackage{graphicx}
\usepackage{psfrag}
\usepackage{pxfonts}
\usepackage{mathrsfs}
\usepackage{color}
\usepackage{amsmath,amssymb}
%\usepackage[pdftex]{hyperref}

%%%%%%%%%%%%%%%%%%%%%%%%%%%%%%%%%%%%%%%%%%%%%%%%%%%%%%%%%%%%

\numberwithin{equation}{section}
\newcommand{\diff}{\operatorname{Diff}}

\newcommand{\diam}{\operatorname{diam}}

\theoremstyle{plain}
\newtheorem{maintheorem}{Theorem}

%%%%%%%%%%%%%%%%%%%%%%%%%%%%%%%%%%%%%%%%%%%%%%%%%%%
%%%%%%%%%%%%%%%%%%%%%%%%%%%%%%%%%%%%%%%%%%%%%%%%%%%%%%%%%%%%%%%%%%%%%%%%%%%%%%%%%%%%
%%%%%%%%%%%%%%%%%%%%%%%%%%%%%%%%%%%%%%%%%%%%%%%%%%%%%%%%%%%%%%%%%%%%%%%%%%%%%%%%%%%%

\newcommand{\R}{\mathbb{R}}
\newcommand{\N}{\mathbb{N}}
\newcommand{\Z}{\mathbb{Z}}

\newcommand{\T}{\mathbb{T}}

\newcommand{\vol}{\mathrm{Vol}}
\newcommand{\jac}{\mathrm{Jac}}
\newcommand{\dime}{\mathrm{dim}}

\newtheorem{theorem}{Theorem}[section]

\newtheorem{corollary}[theorem]{Corollary}
\newtheorem{proposition}[theorem]{Proposition}
\newtheorem{lemma}[theorem]{Lemma}
\newtheorem{definition}[theorem]{Definition}

\theoremstyle{remark}
\newtheorem{remark}[theorem]{Remark}

%%%%%%%%%%%%%%%%%%%%%%%%%%%%%%%%%%%%%%%%%%%%%%%%%%%%%%%%%

\begin{document}

\thanks{}

\author{J. Santana C Costa}
\address{DEMAT-UFMA S\~{a}o Lu\'{i}s-SP, Brazil.}
\email{jsc.costa@ufma.br}

\author{F. Micena}
\address{
  IMC-UNIFEI Itajub\'{a}-MG, Brazil.}
\email{fpmicena@gmail.com}

%\author{A. Tahzibi}
%\address{Departamento de Matem\'atica,
%  ICMC-USP S\~{a}o Carlos-SP, Brazil.}
%\email{tahzibi@icmc.usp.br}

\renewcommand{\subjclassname}{\textup{2000} Mathematics Subject Classification}

\date{\today}

\setcounter{tocdepth}{2}

\title{Pathological center foliation with dimension greater than one}
\maketitle
\begin{abstract}
%In this paper, first we relate absolute continuity of the center foliation of a DA-diffeomorphism of the torus with the sum of its center Lyapunov exponents and  the corresponding sum of its linearization. In fact
 In this paper we are considering partially hyperbolic diffeomorphims of the torus, with $dim(E^c) > 1.$ We prove, under some conditions, that if the all center Lyapunov exponents of the linearization $A,$ of  a \mbox{DA-diffeomorphism} $f,$ are positive and the center foliation of $f$ is absolutely continuous, then  the sum of the center Lyapunov exponents of $f$ is bounded by the sum of the center Lyapunov exponents of $A.$ After, we construct a $C^1-$open class of volume preserving \mbox{DA-diffeomorphisms}, far from Anosov diffeomorphisms, with non compact pathological two dimensional center foliation. Indeed, each $f$ in this open set satisfies the  previously established hypothesis, but the sum of the center Lyapunov exponents of $f$ is greater than the corresponding sum with respect to  its linearization. It allows to conclude that the center foliation of $f$ is non absolutely continuous. We still build an example of a DA-diffeomorphism, such that the disintegration of volume along the two dimensional, non compact center foliation is neither Lebesgue nor atomic.
\end{abstract}

%----------------------------------------------------------
\section{Introduction}
%----------------------------------------------------------

A diffeomorphism $f:M\rightarrow M$ of a compact closed smooth manifold is partially hyperbolic if
the tangent bundle splits into three invariant sub bundles $TM=E^s\oplus E^c\oplus E^u$ such that $E^s$ is contracting, $E^u$ is expanding and $E^c$ has an
 intermediate behavior, that is, not as contracting as $E^s$ and nor as expanding as $E^u$. If $E^c=\{0\}$, then $f$ is called
 uniformly hyperbolic or Anosov diffeomorphism. We denote by $PH^r_{\omega}(M),$ the set of all $C^r-$partially hyperbolic diffeomorphism preserving the $\omega-$form. Here we study partially hyperbolic diffeomorphisms on torus ${\T}^d$ homotopic
to a linear Anosov which are known as Derived from Anosov (DA). By \cite{HPS} and \cite{Brpe} for partially hyperbolic diffeomorphisms,
there is foliations $W^s$ and $W^u$ tangent to $E^s$ and $E^u$, respectively, but the distribution $E^c$ may not be integrable, for instance see section 6.1 of \cite{pesin2004lectures}.
If $E^c$ is  one dimensional, then it is integrable, but not necessarily uniquely integrable (see \cite{HHU2}). In \cite{B} shows that  for (absolute) partially hyperbolic diffeomorphisms if the foliations $W^s$ and $W^u$ has a geometrical condition (quasi-isometric), then $E^c$ is uniquely integrable, that is, there
is a foliation $W^c$ tangent to $E^c$. Our results relate absolute continuity  of the center foliation $W^c$ and the Lyapunov exponents.

The Lyapunov exponents play an important role in the ergodic theory and dynamical systems. They are useful tool of the Pesin theory, in the study of
entropy, equilibrium states among others. The existence of these exponents is guaranteed by celebrated Osceledec's Theorem \cite{OS}.
In general the Lyapunov exponents not vary continuously with $x\in M$ or with the dynamics in the ambient
$\diff^1(M)$.
We show, under some conditions, that the partially hyperbolic diffeomorphsms in the same homotopic class has its center Lyapunov exponents bounded by center exponents of the its linearization, more precisely:

\begin{maintheorem}\label{Teo E}
Let  $f:{\T}^d\rightarrow {\T}^d$ be a volume preserving  DA diffeomorphism and consider $A$ its linearization  such that:
\begin{enumerate}
  \item $\dime E^c_f =\dime E^c_A=d_c.$
  \item $E^c_A=E^c_1\oplus E^c_2\oplus\cdots \oplus E^c_{d_c},$ such that each $E^c_i, i =1, \ldots, d_c$ is a unidimensional eigenspace of $A$ with the corresponding Lyapunov exponent, $\lambda_i^c(A)>0,$ for any $i = 1, 2, \ldots, d_c.$
  \item $f$ is dynamically coherent and in the universal cover $\angle(T_x\widetilde{W}^c_f, (\widetilde{E}^c_A)^{\bot})>\alpha>0$ for any $x\in{\R}^d.$
\end{enumerate}
If the center foliation $W^c_f$ is absolutely continuous, then
 $\displaystyle\sum_{i=1}^{d_c}\lambda_i^c(f,x)\leq\displaystyle \sum_{i=1}^{d_c}\lambda_i^c(A)$  for $m-$ almost everywhere
 $x\in {\T}^d$.
\end{maintheorem}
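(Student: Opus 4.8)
The plan is to compare, leaf by leaf, the growth of center plaques of $f$ against the center Jacobian of its linearization $A$, using the Franks--Manning semiconjugacy together with hypothesis (3), and then to convert the resulting leafwise inequality into an $m$-almost-everywhere estimate by means of the Lebesgue differentiation theorem and the absolute continuity hypothesis. Throughout $\vol^{c}$ denotes the Riemannian volume induced on the leaves of $W^{c}_{f}$, and I use the standard identity $\sum_{i=1}^{d_{c}}\lambda^{c}_{i}(f,x)=\lim_{n}\tfrac1n\log\jac\big(Df^{n}|_{E^{c}_{f}}\big)(x)$, valid for $m$-a.e.\ $x$. Concretely, first I would recall that, $f$ being DA, there is a surjective semiconjugacy $\widetilde H\colon\R^{d}\to\R^{d}$ with $\widetilde H\circ\widetilde f=A\circ\widetilde H$ and $R:=\sup_{\R^{d}}\|\widetilde H-\mathrm{id}\|<\infty$, descending to a map $H\colon\T^{d}\to\T^{d}$ homotopic to the identity; by dynamical coherence and the DA structure $\widetilde H$ carries each leaf of $\widetilde W^{c}_{f}$ into an affine translate of $E^{c}_{A}$ (the corresponding leaf of $\widetilde W^{c}_{A}$), so every center leaf of $\widetilde f$ lies in the $R$-neighborhood of such a plane. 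Let $\pi\colon\R^{d}\to E^{c}_{A}$ be the orthogonal projection; by (3) its restriction to any center leaf is a local diffeomorphism whose Jacobian lies in $[(\sin\alpha)^{d_{c}},1]$, and, the leaf being contained in an $R$-neighborhood of an $E^{c}_{A}$-plane, $\pi$ has fibers of cardinality bounded by some $N=N(\alpha,R)$ (two points of a fiber differ by a vector of $(E^{c}_{A})^{\perp}$ of length $\le 2R$, yet are separated in the leaf by a definite amount). Hence, for any bounded center plaque $D\subset\widetilde W^{c}_{f}(x)$ with $\vol^{c}(D)>0$ and every $n$,
\[
\vol^{c}(\widetilde f^{\,n}D)\ \le\ \frac{N}{(\sin\alpha)^{d_{c}}}\,\vol_{E^{c}_{A}}\big(\pi(\widetilde f^{\,n}D)\big).
\]

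The core step is to bound the right-hand side. From $\widetilde H\circ\widetilde f^{\,n}=A^{n}\circ\widetilde H$ and $\widetilde H(D)\subset\widetilde H(x)+E^{c}_{A}$ (a set of radius $r_{0}:=\diam D+2R$ inside that plane) one gets $\widetilde H(\widetilde f^{\,n}D)=A^{n}\widetilde H(D)\subset A^{n}\widetilde H(x)+A^{n}B_{E^{c}_{A}}(0,r_{0})$; since $A(E^{c}_{A})=E^{c}_{A}$ and $\pi$ is the identity on $E^{c}_{A}$, the set $\pi(\widetilde H\widetilde f^{\,n}D)$ lies in a translate of the ellipsoid $A^{n}B_{E^{c}_{A}}(0,r_{0})$, and since $\|\widetilde H-\mathrm{id}\|\le R$ the set $\pi(\widetilde f^{\,n}D)$ lies in the $R$-neighborhood of that translate. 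By hypothesis (2), $A|_{E^{c}_{A}}$ is (real) diagonalizable with all eigenvalue moduli at least $e^{\lambda_{\min}}$, where $\lambda_{\min}:=\min_{i}\lambda^{c}_{i}(A)>0$, so the smallest semi-axis of $A^{n}B_{E^{c}_{A}}(0,r_{0})$ is at least $c\,e^{n\lambda_{\min}}\to\infty$; passing to the $R$-neighborhood therefore enlarges Lebesgue measure on $E^{c}_{A}$ by a factor tending to $1$, and $\vol_{E^{c}_{A}}(\pi(\widetilde f^{\,n}D))\le(1+o(1))\,\omega_{d_{c}}r_{0}^{d_{c}}\,|\det(A|_{E^{c}_{A}})|^{n}$. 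Combined with the previous display,
\[
\limsup_{n\to\infty}\frac1n\log\vol^{c}(\widetilde f^{\,n}D)\ \le\ \log|\det(A|_{E^{c}_{A}})|\ =\ \sum_{i=1}^{d_{c}}\lambda^{c}_{i}(A).
\]
I expect this to be the main obstacle: everything hinges on controlling the distortion introduced by the bounded-distance (but only Hölder) semiconjugacy, and it is precisely the positivity of the center exponents of $A$ (hypothesis (2)) that forces this distortion to be subexponential, while the uniform transversality (3) is what turns $\pi$ into an honest, bounded-multiplicity coordinate along center leaves.

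Finally I would descend to the pointwise bound. Change of variables gives $\vol^{c}(\widetilde f^{\,n}D)=\int_{D}\jac(Df^{n}|_{E^{c}})\,d\vol^{c}$, so by Jensen's inequality for the concave function $\log$,
\[
\frac1{\vol^{c}(D)}\int_{D}\frac1n\log\jac(Df^{n}|_{E^{c}})\,d\vol^{c}\ \le\ \frac1n\log\frac{\vol^{c}(\widetilde f^{\,n}D)}{\vol^{c}(D)}.
\]
The integrands are uniformly bounded, $-d_{c}\log\|Df^{-1}\|_{\infty}\le\frac1n\log\jac(Df^{n}|_{E^{c}})\le d_{c}\log\|Df\|_{\infty}$, and by Oseledets they converge $m$-a.e.\ to $\sum_{i}\lambda^{c}_{i}(f,\cdot)$; here the absolute continuity of $W^{c}_{f}$ is used, since it makes the disintegration of $m$ along center leaves equivalent to $\vol^{c}$, so the convergence holds $\vol^{c}$-a.e.\ on $\vol^{c}$-a.e.\ leaf. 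Dominated convergence together with the $\limsup$ bound then yields, for every bounded plaque $D$ in a $\vol^{c}$-a.e.\ center leaf,
\[
\frac1{\vol^{c}(D)}\int_{D}\sum_{i=1}^{d_{c}}\lambda^{c}_{i}(f,y)\,d\vol^{c}(y)\ \le\ \sum_{i=1}^{d_{c}}\lambda^{c}_{i}(A).
\]
Letting $D$ shrink to a point and invoking the Lebesgue differentiation theorem on the (Riemannian) center leaf gives $\sum_{i}\lambda^{c}_{i}(f,y)\le\sum_{i}\lambda^{c}_{i}(A)$ for $\vol^{c}$-a.e.\ $y$ on $\vol^{c}$-a.e.\ leaf, and a last appeal to the absolute continuity of $W^{c}_{f}$ promotes this to $m$-a.e.\ $y\in\T^{d}$; note that no ergodicity of $f$ is needed.
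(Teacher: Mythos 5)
Your overall strategy — bound the leafwise volume growth of center plaques under $\tilde f$ by the center Jacobian of $A$, then transfer this to an $m$-a.e.\ pointwise inequality via absolute continuity — is the same as the paper's. The paper instead runs a contradiction argument (building sets $Z_{q,N}$ where the center Jacobian of $\tilde f^n$ exceeds $(1+\tfrac1q)^n e^{n\sum\lambda^c_i(A)}$, then intersecting a positive-measure piece of a leaf); your direct Jensen-plus-Lebesgue-differentiation finish is a perfectly valid alternative, and uses the absolute-continuity hypothesis in the same way.

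The real issue is in your ``core step.'' You assert that ``by dynamical coherence and the DA structure $\widetilde H$ carries each leaf of $\widetilde W^c_f$ into an affine translate of $E^c_A$,'' and the entire exponential-rate bound rests on this, since it is what guarantees that $A^n\widetilde H(D)$ lives inside a $d_c$-plane and therefore grows only at the rate $|\det(A|_{E^c_A})|^n$. This claim is not an obvious consequence of hypotheses (1)--(3). It is in fact exactly the content of a deep theorem (Fisher--Potrie--Sambarino, Theorem~\ref{Teo PFS} of the paper) that the authors only invoke under the \emph{additional} hypothesis that $f$ is isotopic to $A$ through partially hyperbolic diffeomorphisms — a hypothesis Theorem~\ref{Teo E} does not assume. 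Without it, the Franks--Manning semiconjugacy is only known to satisfy $\|\widetilde H-\mathrm{id}\|\le R$, which by Proposition~\ref{Prop Hamm 3} gives $\widetilde W^c_f(x)\subset B_{R_c}(x+E^c_A)$; but an $R_c$-tube around a center plane, pushed forward by $A^n$, inflates at the \emph{unstable} rate of $A$ in the $(E^c_A)^\perp$ directions, and this destroys the estimate. The paper sidesteps the semiconjugacy entirely: it uses Hammerlindl's Corollary~\ref{Cor Hamm 2} (the asymptotic alignment of far-apart points on a center leaf with $E^c_A$, which only requires quasi-isometry of $W^c_f$, itself a consequence of (3) via Brin's Proposition~\ref{B1}) together with the uniform bound $\|\tilde f-\tilde A\|<K$, applied inductively in Claim~3 of Proposition~\ref{Prop vol center}, to show directly that $\tilde f^n(\pi^{-1}(R))\subset\pi^{-1}\big((1+\varepsilon)^n\tilde A^n(R)\big)$ for large enough cubes $R$. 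That inductive argument is the technical heart of the paper and the part your proof is missing. (Two smaller remarks: the bounded-multiplicity claim for $\pi$ on a leaf implicitly uses quasi-isometry of $\widetilde W^c_f$, which you should invoke explicitly via Brin; and your ellipsoid-neighborhood estimate correctly uses hypothesis (2), $\lambda^c_i(A)>0$, to make the $R$-neighborhood negligible — that part is fine.)
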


Absolutely continuous of foliations is a very related property with Lyapunov exponents. We understand it by absolutely continuous the behavior of
foliation relative to disintegration of the volume in the foliation. By Fubini's Theorem we know
that  $C^1$-foliations are absolutely continuous, but this property is not valid for $C^0$-foliation.
The non absolutely continuous foliation have
been referred to as "Fubini's Nightmare" or "Fubini Foiled", Katok construct  examples of foliation with this property (see \cite{Mi}),
for more definitions and discussions
about absolutely continuous see \cite{PVW}. In  \cite{A}, \cite{AS} it is shown that the stable and unstable
foliations of $C^2-$Anosov diffeomorphisms are absolutely continuous and this property is fundamental in the
proof of the ergodicity of the volume preserving $C^2-$Anosov diffeomorphisms, \cite{A}. We also know that for partially hyperbolic $C^2$ diffeomorphisms the stable and unstable
foliations are absolutely continuous \cite{Brpe}, but in general, we do not know if the center foliation
(when there is) is absolutely continuous. One of the first to study the behavior of the center foliation
was R. Ma\~n\'e, in a letter (unpublished) to M. Shub,
they relate the absolutely continuous of compact center foliations in which the Lyapunov exponents are non zero.
These ideas were very useful in the study on absolutely continuous of compact center foliations
(see \cite{RW} and \cite{HP}). The non absolutely
continuous of the non compact center foliations is also very common, in \cite{Go} show there are open sets in $PH^r_m({\T}^3), r \geq 2,$
of diffeomorphisms with one dimensional non compact center foliation and non absolutely continuous. We build an open set $U\subset PH^r_m({\T}^4),$ \mbox{$r \geq 2$},
of diffeomorphisms with non compact two dimensional center foliation and non absolutely continuous.

Denote by $DA^r_m(\mathbb{T}^d)$ the set of all  $C^r$ and $m$ preserving DA diffeomorphism of $\mathbb{T}^d$ and $\mathcal{A}(\mathbb{T}^d)$
the set of all Anosov diffeomorphism of $\mathbb{T}^d.$

%By using Theorem B, we can find $C^1$ open sets of $C^r, r\geq 2$ volume preserving partially hyperbolic diffeomorphisms of $\mathbb{T}^4$ with robustly bi-dimensional pathological center foliation.

\begin{maintheorem} There is a $C^1-$open set $U \subset DA^r_m(\mathbb{T}^4) \setminus \overline{\mathcal{A}(\mathbb{T}^4)}, $
with $r \geq 2,$ such that each $f \in U$ has the same diagonalizable
linearization $A,$ $\dim E^c_f = 2,$  satisfying the hypothesis $1), 2) $ and $3) $  of Theorem \ref{Teo E}, but
$\sum_{i = 1}^{2} \lambda^c_i(f, x) >  \sum_{i = 1}^{2} \lambda^c_i(A),$  for $m$ almost everywhere
$x \in \mathbb{T}^4.$ Particularly $ W^c_f$ is non absolutely continuous for every $f \in U.$
\end{maintheorem}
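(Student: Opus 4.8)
The last sentence of the statement is a consequence of Theorem~\ref{Teo E}: every $f\in U$ will satisfy $1),2),3)$, so if $W^c_f$ were absolutely continuous we would get $\sum_i\lambda^c_i(f,x)\le\sum_i\lambda^c_i(A)$ for $m$-a.e. $x$, contradicting the strict inequality. Thus the content is the construction of $U$. The plan is to start from a diagonalizable hyperbolic $A\in SL(4,\Z)$ with simple \emph{real} eigenvalues $0<\mu_s<1<\mu_1\le\mu_2<\mu_u$, $\mu_s\mu_1\mu_2\mu_u=1$, and a large gap $\mu_u\gg\mu_2$; its eigenspaces give the absolute partially hyperbolic splitting $T\T^4=E^s_A\oplus E^c_A\oplus E^u_A$, $\dim E^c_A=2$, $\lambda^c_i(A)=\log\mu_i>0$. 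Fix the flat metric in which $A$ is self-adjoint (eigenvalues $=$ singular values; this changes neither the exponents nor the partially hyperbolic structure) and two distinct fixed points $p,q$ of $A$. The candidate is $f_0=\psi_2\circ\psi_1\circ A$, where $\psi_1,\psi_2$ are volume-preserving, equal to the identity outside balls $B(p,\eps)$ and $B(q,\eta)$ with $\eta\ll\eps$, and — the structural constraint that makes the rest work — both act only within the $E^c_A\oplus E^u_A$ eigencoordinates, leaving the $E^s_A$-coordinate untouched. Then $Df_0$ preserves the linear bundle $E^{cu}_A$, so $W^{cu}_{f_0}=W^{cu}_A$ is linear, $f_0$ is dynamically coherent, $E^c_{f_0}(x)\subseteq E^{cu}_A$ for all $x$, and since $E^c_{f_0}$ is uniformly transverse to $E^u_{f_0}\approx E^u_A$ (domination) and has no $E^s_A$-component, $E^c_{f_0}(x)$ stays uniformly transverse to $E^s_A\oplus E^u_A=(\widetilde E^c_A)^\bot$ in the chosen metric; i.e. hypothesis $3)$ holds for free.

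The division of labor is as follows. The map $\psi_1$ is taken $C^1$-small and designed so that inside $B(p,\eps)$ it compresses the $E^u_A$-direction by a factor $e^{-\delta}$ with $\delta>0$ small, pushing the compensating expansion ($\det D\psi_1=1$) onto $E^c_A$. The map $\psi_2$ is allowed to be $C^1$-large but has tiny support $B(q,\eta)$, and is chosen so that one center multiplier of $Df_0$ at $q$ becomes smaller than $1$ in modulus while the dominated splitting $E^s\prec E^c\prec E^u$ and the absoluteness stay intact (for an appropriate choice of the $\mu_i$ one can also keep the center bunching condition); this forces $Df_0$ at $q$ to have stable index $2$, whereas every periodic point of any Anosov with linearization $A$ has stable index $1$, and since that index is $C^1$-locally constant, $f_0$ is $C^1$-far from $\mathcal A(\T^4)$. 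Because $\eta\ll\eps$, $\psi_2$ perturbs all integral quantities only by $O(\eta^4)$.

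Granting the construction, the exponent inequality comes from conservativity. For $m$-a.e. $x$ one has $\lambda^s(f_0,x)+\sum_i\lambda^c_i(f_0,x)+\lambda^u(f_0,x)=\lim\tfrac1n\log|\jac(Df_0^n)(x)|=0$, and integrating (the strong bundles being one-dimensional) gives $\int\sum_i\lambda^c_i(f_0,\cdot)\,dm=-\int\log\|Df_0|_{E^s_{f_0}}\|\,dm-\int\log\|Df_0|_{E^u_{f_0}}\|\,dm$, so it suffices to show $\int\log\|Df_0|_{E^s_{f_0}}\|\,dm+\int\log\|Df_0|_{E^u_{f_0}}\|\,dm<\log\mu_s+\log\mu_u$. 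For the unstable term, off $B(p,\eps)\cup B(q,\eta)$ one has $\|Df_0|_{E^u_{f_0}}\|=\|A|_{E^u_{f_0}}\|\le\mu_u$ (as $E^u_A$ is the most expanded direction of $A$ in the adapted metric), inside $B(p,\eps)$ the compression makes it $\le e^{-\delta}\mu_u$ on a set of measure $\gtrsim\eps^4$, and inside $B(q,\eta)$ the excess is $O(\eta^4)$; hence $\int\log\|Df_0|_{E^u_{f_0}}\|\,dm\le\log\mu_u-c_1\delta\eps^4+O(\eta^4)$. For the stable term, the point of keeping the $E^s_A$-coordinate fixed is that the $E^s_A$-component of $Df_0(x)v$ equals $\mu_s$ times that of $v$ for every $v$; since $E^s_A$ is a nondegenerate minimum of $v\mapsto\log\|Av\|$ in the adapted metric, this forces $\log\|Df_0|_{E^s_{f_0}}(x)\|-\log\mu_s$ to be \emph{quadratic} in the perturbation size, giving $\int\log\|Df_0|_{E^s_{f_0}}\|\,dm\le\log\mu_s+c_2\delta^2\eps^4+O(\eta^4)$. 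Choosing first $\delta$ and then $\eta$ small enough, the linear gain $c_1\delta\eps^4$ beats the quadratic loss $c_2\delta^2\eps^4$ and the $O(\eta^4)$ corrections, so $\int\sum_i\lambda^c_i(f_0,\cdot)\,dm>\log\mu_1+\log\mu_2=\sum_i\lambda^c_i(A)$. I expect this first-order-versus-second-order bookkeeping — carried out while keeping $E^s_{f_0},E^c_{f_0},E^u_{f_0}$ and the quasi-isometry of the strong foliations under control — to be the main obstacle.

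Finally, robustness and the a.e. statement. The map $f\mapsto-\int\log\|Df|_{E^s_f}\|\,dm-\int\log\|Df|_{E^u_f}\|\,dm$ is $C^1$-continuous on the open set of partially hyperbolic diffeomorphisms (the one-dimensional strong bundles vary continuously with $f$, the integrands being continuous and bounded) and equals $\int\sum_i\lambda^c_i(f,\cdot)\,dm$ by conservativity, so $\{\,\int\sum_i\lambda^c_i(f,\cdot)\,dm>\sum_i\lambda^c_i(A)\,\}$ is $C^1$-open. Intersecting it with the $C^1$-open conditions ``partially hyperbolic with $\dim E^c=2$'', ``absolute, with quasi-isometric strong foliations (hence dynamically coherent, by \cite{B}) and the cone form of $3)$'', ``has a hyperbolic periodic point of stable index $2$ (hence $C^1$-far from $\mathcal A(\T^4)$)'', and ``accessible and center bunched'', and perturbing $f_0$ by an arbitrarily small $C^1$ amount inside all of these to make it accessible, one obtains a nonempty $C^1$-open $U\subset DA^r_m(\T^4)\setminus\overline{\mathcal A(\T^4)}$. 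For $f\in U$, ergodicity with respect to $m$ (Burns--Wilkinson) turns the integral inequality into $\sum_i\lambda^c_i(f,x)>\sum_i\lambda^c_i(A)$ for $m$-a.e. $x$, and since $f$ satisfies $1),2),3)$, Theorem~\ref{Teo E} forces $W^c_f$ to be non absolutely continuous. The scheme follows the one-dimensional construction of \cite{Go}, the novelty being the two-dimensional, non-compact center and the conservative handling of the center expansion.
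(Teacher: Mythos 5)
Your proposal takes a genuinely different route, and the idea of making both perturbations preserve the $E^{cu}_A$-coordinate — so that $W^{cu}_{f_0}=W^{cu}_A$ is linear, hypothesis $3)$ holds for free, and the stable exponent is controlled by the quotient cocycle on $T\mathbb{T}^4/E^{cu}_A$ (in fact you then get $\lambda^s(f_0,x)\equiv\log\mu_s$ \emph{exactly}, since that quotient cocycle is the constant $\mu_s$ and $E^s_{f_0}$ is uniformly transverse to $E^{cu}_A$, which is stronger than the quadratic bound you brace for) — is cleaner than what the paper does, which is to establish quasi-isometry and dynamical coherence through cone estimates (its Appendix, Lemma~\ref{lema Adp Ponce-Tahzibi}) and Brin's theorem, and to invoke Baraviera--Bonatti \cite{BB} as a black box for the exponent inequality. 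Your appeal to Burns--Wilkinson for ergodicity is also different from the paper's, which instead uses the second invariant splitting $F^s\oplus F^c\oplus F^u$ with $\dim F^c=1$ and stable ergodicity from \cite{HHU}.

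There is, however, a genuine gap in the index-flipping step. You allow $\psi_2$ to be ``$C^1$-large with tiny support'' and assert that the (absolute) dominated splitting, the partial hyperbolicity, and center bunching ``stay intact.'' This does not follow: the invariant cone families that certify partial hyperbolicity are only preserved under $C^1$-\emph{small} perturbations — this is precisely the content of Lemma~\ref{lema Adp Ponce-Tahzibi} — and shrinking the support does not help at the points where $D\psi_2$ is far from the identity, since the cone condition is pointwise. The paper's entire choice of the family $A_n$ is engineered to avoid this: the smaller center eigenvalue $\beta^c_{1,n}=1+\alpha_n$ tends to $1^+$, so pushing it below $1$ (to $1-\alpha_n$) is a $C^1$-perturbation of size $O(\alpha_n)$, which the conservative Franks-type Lemma~\ref{LemBDP} realizes with a volume-preserving diffeomorphism $C^1$-close to the identity, and the cone lemma then applies. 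To repair your argument you would have to impose, analogously, that $\mu_1\to1^+$ — and exhibit integer $4\times4$ matrices with that property together with the required gap $\mu_u\gg\mu_2$, a point you leave implicit — so that $\psi_2$ can be taken $C^1$-small after all. A secondary soft spot is the $\psi_1$ step: a volume-preserving bump supported in $B(p,\eps)$ that ``compresses $E^u$ and compensates on $E^c$'' inside the ball must re-expand $E^u$ near the boundary of the ball, so $\int\log\|Df_0|_{E^u_{f_0}}\|\,dm<\log\mu_u$ is not a free lunch; this is exactly the first-order-versus-second-order analysis carried out in \cite{BB} (with a rotation of the $cu$-plane rather than a compression), and citing it, as the paper does, is the efficient move.
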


This Theorem is related to a result of \cite{SX} which shows the existence of open sets in $PH^r_m({\T}^d)$ of diffeomorphisms near a linear automorphism with non absolutely continuous one dimensional center foliation. In \cite{M16}, the second autor display a open set in $PH^r_m({\T}^3)$ of diffeomorphisms far from linear Anosov with non absolutely continuous one dimensional center foliation. Our results are a generalized version of results of \cite{M16}.

A natural question is how is disintegration of the volume form along the foliations in these cases? This is, how are conditional
measures of non absolutely continuous foliations?
In  \cite{PT} and \cite{PTV} built examples of partially hyperbolic diffeomorphisms of ${\T}^3$ wherein
the disintegration of the center foliation is atomic (totally contrary to the Lebesgue measure).
In \cite{AVW} it is shown for perturbation of time-one of geodesic flow, the disintegration along the center foliation
is or Lebesgue or atomic. In \cite{V} and \cite{YV} it is shown which the disintegration of the one dimensional center foliation
of partially hyperbolic diffeomorphisms of ${\T}^3$ can be neither Lebesgue nor atomic (in contrast to the dichotomy of \cite{AVW}).
We construct an example of the partially hyperbolic diffeomorphism with two dimensional center foliation whose disintegration
is neither Lebesgue nor atomic.

\begin{maintheorem}\label{thmB}
There is a DA diffeomorphism $f:{\T}^4\rightarrow{\T}^4$ volume preserving with non compact two dimensional center foliation such that the disintegration of volume along the center leaves is neither Lebesgue nor atomic.
\end{maintheorem}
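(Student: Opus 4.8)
The plan is to build $f$ as a suitable skew-type perturbation of a linear Anosov automorphism $A$ of $\mathbb{T}^4$ whose center bundle $E^c_A$ is two dimensional, split as $E^c_1\oplus E^c_2$ into two eigendirections with positive center exponents, and whose stable and unstable bundles are one dimensional each. First I would fix $A$ diagonalizable with eigenvalues $0<\mu_s<1<\mu_1\le\mu_2<\mu_u$, so that $d_c=2$ and the center foliation of $A$ is the linear foliation by 2-planes. Following the DA-construction philosophy of \cite{M16} (and of \cite{V}, \cite{YV} in the one-dimensional case), I would modify $A$ inside a small neighbourhood of a fixed point $p$ so that $f$ remains partially hyperbolic, dynamically coherent, with $E^c_f$ a small cone-perturbation of $E^c_A$ (hence condition 3 of Theorem \ref{Teo E} holds with some $\alpha>0$), and so that $f=A$ outside that neighbourhood. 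The center foliation $W^c_f$ is then obtained from $W^c_A$ by the leaf-conjugacy coming from structural stability of the partially hyperbolic splitting; its leaves are non-compact planes.

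The heart of the construction is to arrange the center dynamics so that the disintegration of volume along $W^c_f$ is \emph{neither} Lebesgue \emph{nor} atomic. I would do this by making the perturbation act, in center-leaf coordinates, like a product: on the $E^c_1$-direction behave as in the V\'asquez--Yang--Viana examples \cite{V}, \cite{YV}, i.e.\ introduce a region where the center exponent along $E^c_1$ is strictly increased, forcing the conditional measures on 1-dimensional center-slices in that direction to be non-atomic but singular with respect to arclength on a positive-measure set; and on the $E^c_2$-direction keep $f$ essentially linear (equal to $A$), so that conditional measures along that direction are equivalent to Lebesgue. Fubini applied to the two one-dimensional sub-foliations of $W^c_f$ then shows the two-dimensional conditionals are a (local) product of a singular-but-non-atomic measure with a Lebesgue measure, hence neither absolutely continuous nor atomic. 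To get singularity I would invoke the exponent obstruction of Theorem \ref{Teo E}: arranging $\sum_i\lambda^c_i(f,x)>\sum_i\lambda^c_i(A)$ on a positive-volume set rules out absolute continuity of $W^c_f$; and to rule out atomicity I would show the center holonomies of $f$ are absolutely continuous enough (e.g.\ the leaves carry no periodic compact sub-structure and the perturbation is $C^2$-small, so by the Ma\~n\'e--Shub-type argument the conditionals cannot charge points) — concretely, atomic conditionals would force the number of atoms per leaf to be a finite invariant, incompatible with the mixing of $f$ on center leaves produced by the non-trivial $E^c_1$-dynamics.

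The key technical steps, in order, are: (i) fix $A$ and the DA-perturbation $f$, verifying partial hyperbolicity, dynamical coherence, and the angle condition 3 — this is routine via cone-field estimates as in \cite{M16}; (ii) identify $W^c_f$ and its two invariant one-dimensional sub-foliations $W^c_{f,1}$, $W^c_{f,2}$ tangent to the (perturbed) $E^c_1$, $E^c_2$, using that these sub-bundles are themselves integrable by a graph-transform argument since they are dominated inside $E^c_f$; (iii) transport the one-dimensional results of \cite{YV} to the $W^c_{f,1}$ direction to get non-Lebesgue, non-atomic conditionals there, while the $W^c_{f,2}$ direction inherits Lebesgue conditionals from $A$; (iv) assemble via a Rokhlin–Fubini argument to describe the 2-dimensional conditionals as a local product and conclude.

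The main obstacle I anticipate is step (ii)–(iii): ensuring that the perturbation can be made to respect the splitting $E^c_1\oplus E^c_2$ so that $W^c_{f,1}$ genuinely exists as an $f$-invariant foliation \emph{inside} the center leaves, and that the quantitative estimates of \cite{YV} (which give a precise non-atomic, non-Lebesgue disintegration in dimension one) survive being embedded as a sub-foliation of a two-dimensional center foliation rather than sitting in an ambient $\mathbb{T}^3$. Handling the interaction (holonomy) between the two center sub-foliations — showing it does not destroy the product structure of the conditionals — is where the real work lies; I would control it by keeping $f$ equal to $A$ in the $E^c_2$-direction so that the $W^c_{f,2}$-holonomies are affine, hence automatically preserve the class of the $W^c_{f,1}$-conditionals.
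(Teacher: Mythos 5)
Your proposal and the paper's proof take fundamentally different routes, and your route has gaps that the paper's approach avoids entirely.

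The paper's key idea, which is missing from your plan, is to take $f$ to be an \emph{Anosov} diffeomorphism, obtained from a diagonalizable linear Anosov $A$ of $\mathbb{T}^4$ (with one stable and three unstable eigenvalues, two of them being the center ones) by a Baraviera--Bonatti volume-preserving perturbation $\phi^{cu}$ supported in the center-unstable direction and fixing $E^s_A$. Since $f$ is Anosov it is topologically conjugate to $A$ by a homeomorphism $h$, and because $\phi^{cu}$ preserves $E^s_A$ one has $\lambda^s(f)=\lambda^s(A)$. Pesin's formula then gives $h_m(f)=h_m(f^{-1})=-\lambda^s(f)=-\lambda^s(A)=h_{Top}(A)=h_{Top}(f)$, so the volume $m$ is the unique measure of maximal entropy of $f$, forcing $h_{\ast}m=m$. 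Non-absolute continuity (hence not Lebesgue) follows from Theorem~\ref{Teo E} since $\lambda^c_1(f)+\lambda^c_2(f)>\lambda^c_1(A)+\lambda^c_2(A)$. Non-atomicity then falls out for free: by Theorem~\ref{Teo PFS}, $W^c_f(x)=h^{-1}(W^c_A(h(x)))$, so the $h$-pushforward of the disintegration of $m$ along $W^c_f$ is the disintegration of $m$ along $W^c_A$, which is Lebesgue; if the former were atomic the latter would be too, a contradiction.

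Your plan instead relies on (ii)--(iv): splitting $E^c_f=E^c_{f,1}\oplus E^c_{f,2}$ into one-dimensional $f$-invariant sub-bundles with integral sub-foliations inside the center leaves, importing the Viana--Yang one-dimensional singular-but-non-atomic disintegration into the $E^c_{f,1}$ direction, keeping the $E^c_{f,2}$ dynamics affine, and assembling the two-dimensional conditionals as a product by Fubini/Rokhlin. Each of these steps is a genuine problem. First, after a DA perturbation there is no reason the invariant sub-bundles of $E^c_f$ exist or vary continuously enough to integrate to sub-foliations of the center leaves; domination inside the center alone does not give integrability, and a perturbation ``only in the $E^c_1$ direction'' does not keep $E^c_2$ invariant. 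Second, even granting both sub-foliations, conditional measures along a two-dimensional foliation do \emph{not} factor as a product of conditional measures along two transverse one-dimensional sub-foliations unless the holonomies between them enjoy a strong compatibility; you acknowledge this but the proposed fix --- keeping $f=A$ on the $E^c_2$ factor so holonomies are affine --- is not something the perturbation can guarantee once the invariant bundles move. Third, the final argument that atomic conditionals are ``incompatible with mixing on center leaves'' is a heuristic, not a proof; the paper does not need any such argument because the push-forward under the measure-preserving conjugacy $h$ directly transports the disintegration to the linear model. In short, the decisive observation you are missing is that one can (and should) take $f$ Anosov, so that a topological conjugacy exists and, thanks to the MME argument, is automatically volume-preserving; everything else in the paper's proof follows from that in a few lines.
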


%----------------------------------------------------------
\section{Preliminaries}
%----------------------------------------------------------

Let $M$ be a $C^{\infty}$ Riemannian closed (compact, connected and boundaryless)  manifold.
A $C^1-$diffeomorphism $f: M \rightarrow M$ is called a partially hyperbolic diffeomorphism if the tangent bundle $TM$ admits a $Df$ invariant tangent decomposition $TM =  E^s \oplus E^c \oplus E^u$ such that all unitary vectors $v^{\sigma} \in E^{\sigma }_x, \sigma \in \{s,c,u\}$ for every $x \in M$ satisfy:

$$ ||D_x f v^s || < ||D_x f v^c || < ||D_x f v^u ||,$$
moreover

$$||D_x f v^s || < 1 \;\mbox{and}\; ||D_x f v^u || > 1 $$

We say that a partially hyperbolic diffeomorphism $f$ is an absolute partially hyperbolic diffeomorphism if

$$||D_x f v^s || < ||D_y f v^c || < ||D_z f v^u ||$$
for every $x,y,z \in M$ and $v^s, v^c, v^u$ are unitary vectors in $E^s_x, E^c_y, E^u_z$ respectively.

From now on, in this paper, when we require partial hyperbolicity, we mean absolute partially hyperbolicity and
all diffeomorphisms considered are at least $C^1.$

%In partially hyperbolic context it is well known that the sub-bundles $E^s, E^u,$ respectively the stable and unstable sub-bunbles are uniquely integrable to invariant foliations $ W^s,  W^u$ respectively (see \cite{HPS}). The sub-bundle $E^c $ is not ne-\\cessarily uniquely integrable to a invariant foliation $ W^c,$ in fact, in \cite{HHU2} the authors provide examples of (non absolute) partially hyperbolic diffeomorphisms, such that $E^c $ is not  uniquely integrable.

\begin{definition} A partially hyperbolic diffeomorphism $f: M \rightarrow M$ is called dynamically coherent if $E^{cs} := E^c \oplus E^s$
and $E^{cu} := E^c \oplus E^u$  are uniquely integrable to invariant foliations $ W^{cs}$ and  $ W^{cu},$ respectively the
center stable and center unstable foliations. Particularly $E^c$ is uniquely integrable to the center foliation $ W^{c},$
which is obtained by the intersection $ W^{cs} \cap  W^{cu}.$
\end{definition}

%We go to denote by $PH^r_m(\mathbb{T}^d)$ the set of all $C^r-$partially hyperbolic diffeomorphisms of the torus ${\T}^d$
%which preserve the volume form $m.$

%When $M = \mathbb{T}^3,$ Brin,Burago and Ivanov, in \cite{BBI}, shown that:
%
%\begin{theorem}
%\cite{BBI} All partially hyperbolic diffeomorphisms $f: \mathbb{T}^3 \rightarrow \mathbb{T}^3 $ are dynamically coherent.
%\end{theorem}

Every diffeomorphism of the torus $\mathbb{T}^d$ induces an automorphism of the fundamental group and there exists a unique linear
diffeomorphism $f_{\ast}$ which induces the same automorphism on $\pi_1(\mathbb{T}^d).$ The diffeomorphism $f_{\ast}$ is called linearization of $f.$
%In this paper we study relations between the center Lyapunov exponent of $f$ and the center Lyapunov exponents of $f_{\ast}$ assuming the  absolute continuity of the center foliation  $ W^c_f.$  The relations found will allow us to construct a new open class of diffeomorphisms  in $PH_m(\mathbb{T}^d),$ such the center foliation is pathological, i.e,  non absolutely continuous for every partially hyperbolic diffeomorphism in this open set of $PH_m(\mathbb{T}^d).$

\begin{definition} Let $f: \mathbb{T}^d \rightarrow \mathbb{T}^d $ be a partially hyperbolic diffeomorphism, $f$ is called a
derived from Anosov (DA) diffeomorphism if its linearization $f_{\ast}: \mathbb{T}^d \rightarrow \mathbb{T}^d $ is a linear Anosov automorphism.
\end{definition}

\begin{theorem}[\cite{franks1969anosov}, \cite{manning1974there}]
Let $f: \mathbb{T}^d \rightarrow \mathbb{T}^d $ be a derived from Anosov diffeomorphism, then $f$ is  semi-conjugate to $A$, that is,
there exists a continuous surjective
function $h:{\T}^d\rightarrow{\T}^d$ such that $h\circ f=A\circ h$.
\end{theorem}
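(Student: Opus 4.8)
\medskip
\noindent\emph{Plan of proof.}
Write $A=f_{\ast}$ for the linearization, regarded both as a hyperbolic linear automorphism of $\T^d$ and as the linear map $A\colon\R^d\to\R^d$; note that $A$ and $A^{-1}$ lie in $GL_d(\Z)$, hence preserve the lattice $\Z^d$. Fix a lift $\tilde f\colon\R^d\to\R^d$ of $f$ and take $A$ itself as the lift of its linear part. Since $f$ and $A$ induce the same automorphism of $\pi_1(\T^d)=\Z^d$, one has $\tilde f(x+n)=\tilde f(x)+An$ for all $x\in\R^d$ and $n\in\Z^d$; consequently $\tilde f^{-1}(x+n)=\tilde f^{-1}(x)+A^{-1}n$, and the displacement $\phi:=\tilde f-A$ is $\Z^d$-periodic, in particular bounded. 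The plan is to produce a lift $\tilde h=\mathrm{id}+u$ of the desired $h$, with $u\colon\R^d\to\R^d$ bounded continuous and $\Z^d$-periodic, solving $\tilde h\circ\tilde f=A\circ\tilde h$. Expanding this identity and cancelling the common linear term $x\mapsto Ax$ turns it into the cohomological equation
\begin{equation}\label{eq:FMcoh}
A\,u-u\circ\tilde f=\phi .
\end{equation}

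The main step is to solve \eqref{eq:FMcoh} by a fixed-point argument in the Banach space $C_b(\R^d,\R^d)$ of bounded continuous maps with the sup norm. Decompose $\R^d=E^s_A\oplus E^u_A$ into the linear stable and unstable subspaces of $A$, pick an adapted norm so that $a:=\|A|_{E^s_A}\|<1$ and $b:=\|A^{-1}|_{E^u_A}\|<1$, and split $u=u^s+u^u$, $\phi=\phi^s+\phi^u$ accordingly; since $A$ respects the splitting, \eqref{eq:FMcoh} decouples into $A|_{E^s}u^s-u^s\circ\tilde f=\phi^s$ and $A|_{E^u}u^u-u^u\circ\tilde f=\phi^u$. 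The stable block is solved ``backwards'', as the fixed point of $T^s(v)=\big(A|_{E^s}v-\phi^s\big)\circ\tilde f^{-1}$, a contraction of $C_b(\R^d,E^s_A)$ with constant $a$ (precomposition with the bijection $\tilde f^{-1}$ is a sup-norm isometry); the unstable block is solved ``forwards'', as the fixed point of $T^u(v)=A^{-1}|_{E^u}\big(v\circ\tilde f+\phi^u\big)$, a contraction with constant $b$. Because $\phi$ is $\Z^d$-periodic and $A$, $A^{-1}$ preserve $\Z^d$, each operator $T^{\sigma}$ maps the closed subspace of $\Z^d$-periodic maps into itself; the Banach fixed-point theorem therefore yields a unique bounded continuous solution $u=u^s+u^u$ of \eqref{eq:FMcoh}, which is automatically $\Z^d$-periodic.

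With $u$ at hand, $\tilde h:=\mathrm{id}+u$ satisfies $\tilde h(x+n)=\tilde h(x)+n$ and hence descends to a continuous map $h\colon\T^d\to\T^d$; reducing $\tilde h\circ\tilde f=A\circ\tilde h$ modulo $\Z^d$ gives $h\circ f=A\circ h$. For surjectivity, observe that the straight-line homotopy $\tilde h_t=\mathrm{id}+tu$ is $\Z^d$-equivariant, so it descends to a homotopy from $h$ to $\mathrm{id}_{\T^d}$; thus $h$ has degree one and is onto (equivalently, $\tilde h$ is a proper continuous self-map of $\R^d$ of degree one). The only point demanding care is the bookkeeping of the two blocks of \eqref{eq:FMcoh}: the stable part contracts under precomposition with $\tilde f^{-1}$, whereas the unstable part contracts only after applying $A^{-1}|_{E^u}$ and postcomposing with $\tilde f$, and swapping these would ruin the estimates; verifying that $T^s$ and $T^u$ preserve periodicity --- which is exactly where $A\in GL_d(\Z)$ is used --- is the other place to be attentive. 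Everything else (boundedness of $\phi$, completeness of the periodic subspace, and the degree count) is routine.
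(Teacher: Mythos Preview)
The paper does not supply its own proof of this statement; it simply records it as a classical result of Franks and Manning and cites the original references. Your argument is the standard Franks fixed-point construction of the semi-conjugacy and is correct as written: the cohomological equation \eqref{eq:FMcoh} is set up properly, the stable/unstable decoupling and the two contractions $T^s$, $T^u$ are exactly right, and the verification that periodicity is preserved (using $A\in GL_d(\Z)$, hence $A^{-1}n\in\Z^d$) is precisely what makes the solution descend to $\T^d$. The degree-one homotopy for surjectivity is also fine. There is nothing further to compare against in the paper itself.
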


%\textcolor{red}{Resultado do Franks e do Ures}

%\textcolor{red}{ By \cite{H}, given $f: \mathbb{T}^3 \rightarrow \mathbb{T}^3 $ be a partially hyperbolic diffeomorphism, then $f_{\ast}$ is also partially hyperbolic, moreover there is a homeomorphism $h: \mathbb{T}^3 \rightarrow \mathbb{T}^3 $ that carries center leaves of $f_{\ast}$ to corresponding center leaves of
%$f,$ it is,
%$$ h( W^c_{f_{\ast}}(x)) =  W^c_f(h(x)),$$
%where  $ W^c_g(y)$ is the center leaf of $g$ through $y.$
%Particularly the center foliation of a DA diffeomorphism of the $\mathbb{T}^3$ is non compact.}

\subsection{Lyapunov Exponents}

Lyapunov exponents are important constants and measure the asymptotic behavior of dynamics in tangent space level. Let $f: M \rightarrow M$ be a measure preserving diffeomorphism. Then by  Oseledec's Theorem, for almost every $x \in M $ and any $v \in T_x M $ the following limit exists:

$$\lim_{n \rightarrow +\infty} \frac{1}{n} \log ||Df^n(x) \cdot v ||$$
and it is equal to one of the Lyapunov exponents of $f.$% For a volume preserving partially hyperbolic of $\mathbb{T}^3,$ which is the main object of the study in this paper, we get a full Lebesgue measure subset  $\mathcal{R}$   such that for each $x \in \mathcal{R}:$
%
%
%
%$$\lambda_f(x, v) := \lim_{n \rightarrow +\infty} \frac{1}{n} \log ||Df^n(x) \cdot v^{\sigma} || = \lambda^{\sigma}_f(x),$$
%where $\sigma \in \{s,c,u\}$ and $v^{\sigma} \in E^{\sigma}\setminus \{0\}.$

%A result of Hammerlindl-Ures states an important dichotomy between ergodicity and the center Lyapunov exponent equal to zero.
%
%
%\begin{theorem} \cite{HU} \label{teoHU} Suppose that $f: \mathbb{T}^3 \rightarrow \mathbb{T}^3 $ is a conservative $C^2-$DA diffeomorphism. If $f$ is not ergodic, then  the center Lyapunov exponent is zero almost everywhere.
%\end{theorem}

\subsection{Absolute Continuity}
%It is known that the foliations $ W^s$ and $ W^u$ of a conservative $C^{1+\alpha}$ diffeomorphism $f: M \rightarrow M$ satisfies a property called absolute continuity. Absolute continuity is a fundamental tool in the Hopf argument in the proof of ergodicity of $C^{1+\alpha}$ conservative Anosov diffeomorphisms.
Roughly speaking a foliation $ W$ of $M$ is absolutely continuous if satisfies: Given a set $Z \subset M,$ such that $Z$ intersects the leaf $ W(x)$ on a zero measure  set of the leaf, with $x$ along a full Lebesgue set of $M,$ then $Z$ is a zero measure set of $M.$
More precisely we write:

\begin{definition} We say that a foliation $ W$ of $M$ is absolutely continuous if given any $ W-$foliated box $B$ and a Lebesgue measurable set $Z,$ such that $Leb_{ W(x)\cap B} ( W(x)\cap Z) = 0,$ for $m_B-$ almost everywhere $x \in B,$ then $m_B(Z) = 0.$ Here $m_B $ denotes the Lebesgue measure on $B$ and $Leb_{ W(x)\cap B} $ is the Lebesgue measure of the submanifold $ W(x)$ restricted to $B.$
\end{definition}

It means that if $P$ is such that $m_B(P) > 0,$ then there are a measurable subset $B' \subset B,$ such that $m_B(B') > 0$ and  $Leb_{ W(x)\cap B}( W(x) \cap Z) > 0$ for every $x \in B'.$

The study of absolute continuity of the center foliation started with Ma\~{n}\'{e},  that noted a interesting relation between absolute continuity and the center Lyapunov exponent.
The Ma\~{n}\'{e}'s argument can be explained as the following theorem:

\begin{theorem} Let $f: M \rightarrow M$ be a  partially hyperbolic, dynamically coherent such that $\dim(E^c) = 1$ and $ W^c$ is a compact foliation. Suppose $f$ preserves a volume form $m$ on $M,$  and the set of $x \in M$ such that   $\lambda^c_f(x) > 0$ has positive volume.  Then $ W^c$ is non absolutely continuous.
\end{theorem}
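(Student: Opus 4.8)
The plan is to argue by contradiction: assume $W^c$ is absolutely continuous and extract from the equivariance of the volume disintegration a cohomological equation for the center Jacobian that is incompatible with a positive-volume set of strictly positive center exponents. Since $\dim E^c=1$ and $W^c$ is compact, every center leaf $W^c(x)$ is a circle; write $\ell(x)$ for its length and $\mathrm{Leb}_{W^c(x)}$ for the induced leafwise Lebesgue measure. Disintegrating $m$ along $W^c$ over the compact leaf space $\mathcal{L}=M/W^c$ produces conditional probability measures $\{m^c_x\}$ on the leaves together with a quotient measure $\hat m$ on $\mathcal{L}$ with $m=\int m^c_x\,d\hat m$. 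Because $f$ preserves $m$ and carries center leaves to center leaves, it induces an $\hat m$-preserving transformation of $\mathcal{L}$ and one has $f_*m^c_x=m^c_{fx}$. Absolute continuity then forces $m^c_x\ll\mathrm{Leb}_{W^c(x)}$ for $\hat m$-a.e.\ $x$ (the two classical formulations of absolute continuity agree here, cf.\ \cite{PVW}); I would set $\rho_x:=dm^c_x/d\mathrm{Leb}_{W^c(x)}>0$ and define $\Psi(y):=\log\rho_x(y)$ for $y\in W^c(x)$, a measurable function on $M$.

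The core step is to feed the equivariance $f_*m^c_x=m^c_{fx}$ into the leafwise change of variables. Since $f\colon W^c(x)\to W^c(fx)$ distorts leafwise Lebesgue measure by the factor $\|Df|_{E^c_y}\|$ (the genuine leafwise Jacobian, as $\dim E^c=1$), comparing the two densities at $y$ and $fy$ gives
\[
\log\|Df|_{E^c_y}\|=\Psi(y)-\Psi(fy)\qquad\text{for }m\text{-a.e. }y,
\]
and telescoping along an orbit yields $\log\|Df^n|_{E^c_y}\|=\Psi(y)-\Psi(f^ny)$, hence $\lambda^c_f(y)=\lim_n\frac1n\bigl(\Psi(y)-\Psi(f^ny)\bigr)$. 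Once $\Psi\in L^1(m)$, Birkhoff's theorem applied to $|\Psi|$ gives $\frac1n\Psi(f^ny)\to0$ for $m$-a.e.\ $y$, so $\lambda^c_f=0$ $m$-a.e., contradicting the hypothesis that $\{\lambda^c_f>0\}$ has positive volume. Equivalently, one can argue geometrically: absolute continuity furnishes a set of center leaves of positive $\hat m$-measure on which $\{\lambda^c_f>0\}$ has positive leafwise Lebesgue measure, and for such a leaf $\gamma$ one gets $\ell(f^n\gamma)=\int_\gamma\|Df^n|_{E^c_y}\|\,d\mathrm{Leb}_\gamma(y)\to\infty$ by Fatou (since $\|Df^n|_{E^c_y}\|\to\infty$ on that set), which clashes with Poincar\'e recurrence, because $f^{n_j}x\to x$ for $m$-a.e.\ $x$ while the leaf-length function stays bounded near a recurrent point.

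The point I expect to be delicate is precisely the geometric control that makes either formulation legitimate: integrability of $\Psi=\log\rho_x$ in the first version, boundedness of the leaf-length function in the second. Both rest on the compactness of $W^c$ via the structure theory of compact foliations — local boundedness of leaf volume on a full-measure ``good set'', together with the uniform compactness of center leaves available in the present absolute, dynamically coherent setting — which is what bounds leaf lengths and keeps the conditional densities under control. These are exactly the estimates underlying Ma\~n\'e's original remark and its later treatments in \cite{RW} and \cite{HP}; granting them, the cohomological equation above closes the argument.
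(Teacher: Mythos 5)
Your second, ``geometric'' version is in spirit the paper's proof (Ma\~n\'e's argument: positive center exponent on a positive-measure set, absolute continuity to land that set with positive leaf measure on a single leaf, Poincar\'e recurrence, growth of leaf length under iteration). But the way you close the argument contains a gap: you assert that ``the leaf-length function stays bounded near a recurrent point.'' For compact foliations the leaf-volume function is only lower semi-continuous; upper semi-continuity, which is what that step needs, is a genuine additional property (it fails for Sullivan-type compact foliations with unbounded leaf volume). Nothing in ``partially hyperbolic, dynamically coherent, $\dim E^c=1$, $W^c$ compact'' hands you that for free, and invoking ``the structure theory of compact foliations'' and ``uniform compactness available in the present setting'' does not supply it either. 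The paper avoids this entirely by stratifying: it defines
\[
\Lambda_{k,l,n}=\bigl\{x:\,\|Df^j|_{E^c}\|\ge e^{j/k}\text{ for all }j\ge l,\ |W^c(x)|<n\bigr\},
\]
fixes one stratum $\Lambda_{k_0,l_0,n_0}$ of positive measure, applies absolute continuity to get a leaf $W^c(x)$ meeting it in positive leaf measure with $x$ recurrent \emph{to the stratum}, and then reads off $|W^c(f^{n_k}x)|<n_0$ directly from membership in $\Lambda_{k_0,l_0,n_0}$. No continuity of leaf volume is used anywhere. If you adopt that stratification (including the leaf-size bound as part of the set you recur to) your second argument becomes essentially the paper's proof.

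Your first version (the cohomological equation $\log\|Df|_{E^c_y}\|=\Psi(y)-\Psi(fy)$ with $\Psi=\log\rho$) is a genuinely different route, closer in flavor to Ruelle--Wilkinson / Hirayama--Pesin than to the present paper. The equivariance $f_*m^c_x=m^c_{fx}$ and the resulting equation are correctly derived, but the argument is not complete as written: you need $\Psi\in L^1(m)$ before Birkhoff applies to $\tfrac1n\Psi\circ f^n$, and you flag this yourself. Establishing that integrability typically requires exactly the kind of uniform control on conditional densities / leaf volumes that the Ma\~n\'e stratification is designed to bypass, so as presented this version does not self-contain. It also uses more than the paper's definition of absolute continuity: you need to pass from the ``Fubini-type'' definition to absolute continuity of the conditionals $m^c_x\ll\mathrm{Leb}_{W^c(x)}$, which is true but needs a measurable-selection argument that is not spelled out. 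The paper's proof never touches conditional measures; it works directly from the definition, which is cleaner.
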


\begin{proof} Denote by $P$ the set of $x \in M$ such that   $\lambda^c_f(x) > 0.$ Consider the set
\mbox{$\Lambda_{k,l, n } =\{ x \in P | \; ||Df^j(x)|E^c|| \geq e^{\frac{j}{k}},\; \mbox{for every} \;  j \geq l , \; \mbox{and} \; | W^c(x)| < n   \} ,$} here $| W^c(x)|$ denotes the size of the center leaf $ W^c(x)$ through $x. $

We have $P = \displaystyle\bigcup_{k,l, n \in \mathbb{N}} \Lambda_{k,l, n }, $ in particular there are $k_0, l_0, n_0$ such that \mbox{$m(\Lambda_{k_0,l_0, n_0 }) > 0.$}  Supposing  that $ W^c$ is an absolutely continuous foliation, there is a center leaf $ W^c(x), $ such that it intersects  $\Lambda_{k_0,l_0, n_0 }$ on a positive Lebesgue measure set of the leaf. By Poincar\'{e}-recurrence Theorem, the  point $x $ can be chosen a recurrent point, particularly there is a subsequence $n_k$ such that $f^{n_k}(x) \in \Lambda_{k_0,l_0, n_0 }, $ and it implies that the size $| W^c({f^{n_k}(x)})| < n_0.$

On the other hand, we denote
$\alpha = Leb_{ W^c(x)}( W^c(x) \cap \Lambda_{k_0,l_0, n_0 } ),$
so if $j \geq k_0$ we have $| W^c(f^j(x))| \geq \alpha\cdot e^{\frac{j}{k_0}} \rightarrow +\infty$ when $j \rightarrow +\infty, $ and it contradicts $| W^c({f^{n_k}(x)})| < n_0$ for a subsequence $n_k.$
\end{proof}

Consequently all one dimensional compact and absolutely continuous center foliation implies that $\lambda^c_f(x) = 0,$ for $m-$ almost everywhere $x \in M.$

We make a generalized version of the Ma\~n\'e's argument, comparing volumes, in the proof of the Theorem \ref{Teo E}.

\begin{remark} Katok exhibits an example of a volume preserving partially hyperbolic diffeomorphism $f: \mathbb{T}^3 \rightarrow \mathbb{T}^3 $ such that $ W^c$ is compact, non absolutely continuous and $\lambda^c_f(x)=0$ for $m-$ a.e. $x \in \mathbb{T}^3 . $ See \cite{HaPe} and citations therein.
\end{remark}

\subsection{Quasi-isometry}

An important tool that we need is quasi-isometry of foliations.

\begin{definition}
A foliation $W$ of a closed manifold $M$ is called quasi-isometric if there is a
constant $Q > 0$, such that in the universal cover $\widetilde{M}$ we have:
$$
d_{\widetilde{W}}(x,y)\leq Q d_{\widetilde{M}}(x,y) +Q
$$
for every $x, y$ points in the same lifted leaf $\widetilde{W}$,
where  $\widetilde{W}$ denotes the lift of $W$ on $\widetilde{M}$.
\end{definition}

Here $d_{\widetilde{W}}$ denotes the Riemannian metric on $\widetilde{W}$ and $d_{\widetilde{M}}$ is a
Riemannian metric of the ambient $\widetilde{M}$.

Every diffeomorphism of the torus ${\T}^d$ induces an automorphism of the fundamental group
and there exists a unique linear diffeomorphism $f_{\ast}$ which induces the same automorphism
on $\pi_1({\T}^d)$. The diffeomorphism $f_{\ast}$ is called linearization of $f$. When $f$ is DA diffeomorphism, then its
linearization is the linear Anosov automorphism. Below we have a well-known result.

\begin{proposition}\label{Prop Hamm 1}
	Let $f:{\T}^d\rightarrow {\T}^d$ be a partially hyperbolic diffeomorphism with linearization $A$,
	then for each $n\in{\Z}$ and $C>1$ there is an $M>0$ such that  for all $x,y\in {\R}^d$ and
	$$
	||x-y||>M\,\,\, \Rightarrow\,\,\,\frac{1}{C}<\frac{||\tilde{f}^n(x)-\tilde{f}^n(y)||}{||\tilde{A}^n(x)-\tilde{A}^n(y)||}<C.
	$$.
%	De forma mais geral, para cada $n\in {\Z}$, $C>1$ e qualquer aplicação linear $\pi:{\R}^d\rightarrow{\R}^d$,
%	existe um $M$ tal que para $x,y\in {\R}^d$, com $||x-y||>M$
%	$$
%	\frac{1}{C}<\frac{||\pi(f^n(x)-f^n(y))||}{||\pi(A^n(x)-A^n(y))||}<C.
%	$$
\end{proposition}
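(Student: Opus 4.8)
The plan is to use that $f^n$ is homotopic to the linear automorphism $A^n$, so that on $\mathbb{R}^d$ the difference $\tilde f^n-\tilde A^n$ is $\mathbb{Z}^d$-periodic and hence bounded; combined with an elementary linear lower bound for $\|\tilde A^n(x)-\tilde A^n(y)\|$, this forces the two quantities to be comparable once $\|x-y\|$ is large. It suffices to treat $n\geq 1$: for $n\leq -1$ one replaces $f$ by $f^{-1}$ (still partially hyperbolic, with linearization $A^{-1}$), and for $n=0$ there is nothing to prove.

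First I would fix $n$ and write $\tilde f^n(x)=\tilde A^n(x)+\phi_n(x)$ for $x\in\mathbb{R}^d$. Since $f^n$ and the linear map $A^n$ induce the same automorphism of $\pi_1(\mathbb{T}^d)\cong\mathbb{Z}^d$, we have $\tilde f^n(x+v)=\tilde f^n(x)+A^n v$ for every $v\in\mathbb{Z}^d$, and the analogous identity holds trivially for $\tilde A^n$; subtracting gives $\phi_n(x+v)=\phi_n(x)$. Thus $\phi_n$ descends to a continuous map on $\mathbb{T}^d$ and is therefore bounded, say $\|\phi_n(x)\|\leq K_n$ for all $x$. In particular
$$\bigl\|\,(\tilde f^n(x)-\tilde f^n(y))-(\tilde A^n(x)-\tilde A^n(y))\,\bigr\|=\|\phi_n(x)-\phi_n(y)\|\leq 2K_n \qquad (x,y\in\mathbb{R}^d).$$

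Next I would bound $\tilde A^n$ from below: since $A\in GL(d,\mathbb{Z})$ the operator $A^{-n}$ is bounded, so with $c_n:=\|A^{-n}\|^{-1}>0$ one gets $\|\tilde A^n(x)-\tilde A^n(y)\|=\|A^n(x-y)\|\geq c_n\|x-y\|$. Hence, for $x\neq y$,
$$\left|\frac{\|\tilde f^n(x)-\tilde f^n(y)\|}{\|\tilde A^n(x)-\tilde A^n(y)\|}-1\right|\leq\frac{2K_n}{\|\tilde A^n(x)-\tilde A^n(y)\|}\leq\frac{2K_n}{c_n\,\|x-y\|}.$$
Given $C>1$, put $\delta:=\min\{1-C^{-1},\,C-1\}>0$ and choose $M:=2K_n/(c_n\delta)$. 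If $\|x-y\|>M$ then the right-hand side is $<\delta$, so the ratio lies in $(1-\delta,1+\delta)\subset(C^{-1},C)$, which is exactly the assertion; note $M$ depends only on $n$ and $C$, not on $x,y$.

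There is essentially no serious obstacle here; the one point worth stating carefully is the $\mathbb{Z}^d$-periodicity of $\phi_n$, which is where ``homotopic to a linear map'' is used and which turns boundedness into a consequence of compactness of $\mathbb{T}^d$. I would also remark that hyperbolicity of $A$ plays no role in this particular proposition — only invertibility of the integer matrix $A$ is needed — although in the applications $A$ will of course be Anosov.
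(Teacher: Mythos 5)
Your proof is correct and complete. The paper states this proposition without proof, citing it as ``a well-known result'' (it is essentially Hammerlindl's estimate); your argument — decompose $\tilde f^n=\tilde A^n+\phi_n$ with $\phi_n$ being $\Z^d$-periodic and hence bounded, then combine the uniform bound $\|\phi_n(x)-\phi_n(y)\|\leq 2K_n$ with the linear lower bound $\|A^n(x-y)\|\geq \|A^{-n}\|^{-1}\|x-y\|$ — is exactly the standard way to prove it, and your reduction of $n\leq -1$ to $n\geq 1$ and your observation that only invertibility of $A$ over $\Z$ is used are both sound.
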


For a subset $X\subset{\R}^d$ and $R>0$, led $B_R(X)$ denote the neighbourhood
$$
B_R(X)=\{y\in{\R}^d;\,\,||x-y||<R\,\,\mbox{for\,\,some}\,\,x\in X\}.
$$
\begin{proposition}[\cite{H}]\label{Prop Hamm 3}
Let $f:{\T}^d\rightarrow {\T}^d$ be a partially hyperbolic diffeomorphism dynamically coherent with linearization $A$,
then there is a constant $R_c$ such that for all $x\in{\R}^d$,
\begin{itemize}
   \item $\widetilde{W}^{cs}_f(x)\subset B_{R_c}(\widetilde{W}^{cs}_A(x))$,
   \item $\widetilde{W}^{cu}_f(x)\subset B_{R_c}(\widetilde{W}^{cu}_A(x))$,
   \item $\widetilde{W}^{c}_f(x)\subset B_{R_c}(\widetilde{W}^{c}_A(x))$.
\end{itemize}

\end{proposition}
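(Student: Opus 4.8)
\noindent The plan is to go through the Franks--Manning semiconjugacy $h\circ f=A\circ h$ together with the fact that for the \emph{linear} map $A$ each center, center-stable and center-unstable leaf is an affine subspace: $\widetilde{W}^{\sigma}_A(x)=x+E^{\sigma}_A$ for $\sigma\in\{c,cs,cu\}$, where $\R^d=E^s_A\oplus E^c_A\oplus E^u_A$ is the $A$-invariant splitting whose dimensions match those of $f$. By the spectral compatibility of a dynamically coherent absolute DA diffeomorphism with its linearization (see \cite{H}) this splitting may be taken so that the least expansion rate $\mu_u>1$ of $A$ along $E^u_A$ is strictly larger than $\nu:=\sup_{x\in\R^d}\|D_xf|_{E^{cs}_f}\|$, and, symmetrically, the least expansion rate of $A^{-1}$ along $E^s_A$ is strictly larger than $\nu':=\sup_{x\in\R^d}\|D_xf^{-1}|_{E^{cu}_f}\|$. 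Choose lifts with $\widetilde{h}\circ\widetilde{f}=\widetilde{A}\circ\widetilde{h}$ (so $\widetilde{A}$ is affine and $\widetilde{A}^{\,n}(p)-\widetilde{A}^{\,n}(q)=A^n(p-q)$); since the semiconjugacy is homotopic to the identity, $K:=\sup_{z\in\R^d}\|\widetilde{h}(z)-z\|<\infty$. It is enough to establish the first two bullets, because $W^c_f(x)\subset W^{cs}_f(x)\cap W^{cu}_f(x)$ gives $\widetilde{W}^c_f(x)\subset\widetilde{W}^{cs}_f(x)\cap\widetilde{W}^{cu}_f(x)$, and the third bullet will then drop out of a linear-algebra estimate.

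For the center-stable inclusion, fix $y\in\widetilde{W}^{cs}_f(x)$ and put $D:=d_{\widetilde{W}^{cs}_f}(x,y)$. Since $\widetilde{f}$ carries center-stable leaves to center-stable leaves and is $\nu$-Lipschitz along them, $d_{\widetilde{W}^{cs}_f}(\widetilde{f}^{\,n}x,\widetilde{f}^{\,n}y)\le\nu^n D$, hence
\[
\|\widetilde{f}^{\,n}y-\widetilde{f}^{\,n}x\|\le d_{\widetilde{W}^{cs}_f}(\widetilde{f}^{\,n}x,\widetilde{f}^{\,n}y)\le\nu^n D\qquad(n\ge 0).
\]
Applying $\widetilde h$ and using $\widetilde h\circ\widetilde f^{\,n}=\widetilde A^{\,n}\circ\widetilde h$,
\[
\|A^n\bigl(\widetilde h(y)-\widetilde h(x)\bigr)\|=\|\widetilde h(\widetilde f^{\,n}y)-\widetilde h(\widetilde f^{\,n}x)\|\le\|\widetilde f^{\,n}y-\widetilde f^{\,n}x\|+2K\le\nu^n D+2K.
\]
Writing $w^u$ for the $E^u_A$-component of $\widetilde h(y)-\widetilde h(x)$ and working in a norm adapted to the splitting of $A$, we get $\mu_u^{\,n}\|w^u\|\le\|A^n(\widetilde h(y)-\widetilde h(x))\|\le\nu^n D+2K$; since $\mu_u>\max\{1,\nu\}$, letting $n\to\infty$ forces $w^u=0$. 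Thus $\widetilde h(y)\in\widetilde h(x)+E^{cs}_A$, so $\widetilde h(y)$ lies on the affine subspace $x+E^{cs}_A$ shifted by the vector $\widetilde h(x)-x$, whence $d\bigl(\widetilde h(y),\widetilde W^{cs}_A(x)\bigr)\le\|\widetilde h(x)-x\|\le K$; consequently
\[
d\bigl(y,\widetilde W^{cs}_A(x)\bigr)\le\|y-\widetilde h(y)\|+d\bigl(\widetilde h(y),\widetilde W^{cs}_A(x)\bigr)\le 2K,
\]
i.e. $\widetilde W^{cs}_f(x)\subset B_{2K}(\widetilde W^{cs}_A(x))$.

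The center-unstable inclusion is obtained by running the very same computation for $f^{-1}$, noting that $\widetilde W^{cu}_f=\widetilde W^{cs}_{f^{-1}}$, that $A^{-1}$ is the linearization of $f^{-1}$ with $E^u_{A^{-1}}=E^s_A$ and $E^{cs}_{A^{-1}}=E^{cu}_A$, that the relevant rate inequality is exactly $\nu'<$ (least expansion of $A^{-1}$ along $E^s_A$), and that $\widetilde h\circ\widetilde f^{-1}=\widetilde A^{-1}\circ\widetilde h$; this yields $\widetilde W^{cu}_f(x)\subset B_{2K}(\widetilde W^{cu}_A(x))$. Finally, if $y\in\widetilde W^c_f(x)$ then $y\in\widetilde W^{cs}_f(x)\cap\widetilde W^{cu}_f(x)$, so in the adapted norm $d(y,x+E^{cs}_A)\le 2K$ bounds the $E^u_A$-component of $y-x$ by $2K$ while $d(y,x+E^{cu}_A)\le 2K$ bounds its $E^s_A$-component by $2K$; hence $d(y,x+E^c_A)\le 4K$, and taking $R_c:=4K$ (rescaled by the bounded distortion between the adapted norm and the ambient one) proves the third bullet. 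The only non-elementary ingredient is the spectral compatibility invoked in the first paragraph --- that the $A$-invariant splitting realizing $\widetilde W^{cs}_A$, $\widetilde W^{cu}_A$, $\widetilde W^c_A$ can be chosen so that $A$ expands $E^u_A$ strictly faster than $f$ expands $E^{cs}_f$ (and dually for $A^{-1}$); I expect verifying this rate gap to be the main point, while the rest only uses the bounded-distance semiconjugacy, the trivial bound $\|\widetilde f^{\,n}y-\widetilde f^{\,n}x\|\le d_{\widetilde W^{cs}_f}(\widetilde f^{\,n}x,\widetilde f^{\,n}y)$, and the affineness of the linear leaves (quasi-isometry of the $f$-foliations is not needed).
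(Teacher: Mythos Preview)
The paper does not prove this proposition; it is quoted from Hammerlindl \cite{H} and used as a black box. So there is no ``paper's proof'' to compare against, only Hammerlindl's original argument.

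Your argument is correct and clean. The rate gap you call ``spectral compatibility'' is indeed a theorem in \cite{H}: for an absolute partially hyperbolic $f$ with global constants $\mu_2<\lambda_3$ (so that $\|Df|_{E^{cs}_f}\|\le\mu_2$ and $m(Df|_{E^u_f})\ge\lambda_3$ in an adapted metric), Hammerlindl shows that the linearization $A$ has no eigenvalue of modulus in $(\mu_2,\lambda_3)$ and that the eigenspaces with modulus $\ge\lambda_3$ span a subspace of dimension exactly $\dim E^u_f$. Taking $E^u_A$ to be that subspace gives $\mu_u(A)\ge\lambda_3>\mu_2\ge\nu$, which is precisely the inequality you need. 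So your citation of \cite{H} at that step is legitimate and not circular: in Hammerlindl the spectral gap is established first (via the large-scale comparison of Proposition~\ref{Prop Hamm 1}), and the leaf inclusions are derived afterwards.

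Two remarks on how your route differs from Hammerlindl's. First, you go through the Franks--Manning semiconjugacy $\widetilde h$, which exists only because $A$ is Anosov; the proposition as stated (and as proved in \cite{H}) does not assume $A$ hyperbolic, only that $f$ is absolutely partially hyperbolic and dynamically coherent. Hammerlindl's own proof avoids $\widetilde h$ and works directly with Proposition~\ref{Prop Hamm 1}, comparing $\|\widetilde f^{\,n}y-\widetilde f^{\,n}x\|$ to $\|A^n(y-x)\|$ at large scale; this buys him the non-Anosov linearization case. For the present paper, where $f$ is always DA, your shortcut through $\widetilde h$ is perfectly adequate and arguably more transparent. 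Second, your final step for the center leaf (bounding the $E^s_A$- and $E^u_A$-components of $y-x$ separately and adding) is exactly right; the constant $4K$ up to norm equivalence is fine.
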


\begin{corollary}[\cite{H}]\label{Cor Hamm 2}
 If $||x-y||\rightarrow\infty$ where $y\in \widetilde{W}^{c}_f(x)$
	then $\frac{x-y}{||x-y||}\rightarrow \widetilde{E}^{c}_A(x)$ uniformly.
		More precisely, for $\varepsilon>0$ there exists $M>0$ such that if $x\in{\R}^d$, $y\in \widetilde{W}^c_f(x)$ and $||x-y||>M$, then
	$$
	||\pi_A^{c\perp}(x-y)||<\varepsilon||\pi^{c}_A(x-y)||.
	$$
	where $\pi_A^{c}$ is the orthogonal projection in the subspace $E^c_A$  and $\pi_A^{c\perp}$ is the projection in the orthogonal
	subspace $E^{c\perp}_A.$
\end{corollary}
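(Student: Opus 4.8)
\emph{Proof proposal for Corollary~\ref{Cor Hamm 2}.}
The plan is to derive the statement directly from the bounded-distance containment of Proposition~\ref{Prop Hamm 3}, exploiting that the center foliation of the \emph{linear} automorphism $A$ consists of affine subspaces parallel to $\widetilde{E}^{c}_A$. Since $E^{c}_A$ is a sum of eigenspaces of $A$, it is $A$-invariant, so for every $x\in\mathbb{R}^d$ one has $\widetilde{W}^{c}_A(x)=x+\widetilde{E}^{c}_A$; in particular $\pi_A^{c\perp}$ annihilates every displacement vector lying in $\widetilde{W}^{c}_A(x)-x$. This is the geometric fact that makes the whole argument work.

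First I would fix $x\in\mathbb{R}^d$ and $y\in\widetilde{W}^{c}_f(x)$. By Proposition~\ref{Prop Hamm 3} there is a constant $R_c>0$, independent of $x$ and $y$, and a point $z\in\widetilde{W}^{c}_A(x)=x+\widetilde{E}^{c}_A$ with $||y-z||<R_c$. Decomposing $x-y=(x-z)+(z-y)$ and using $x-z\in\widetilde{E}^{c}_A$, the orthogonal-to-center part collapses to the small correction term:
$$
||\pi_A^{c\perp}(x-y)||=||\pi_A^{c\perp}(z-y)||\le ||z-y||<R_c .
$$
So no matter how far apart $x$ and $y$ are inside the leaf, $||\pi_A^{c\perp}(x-y)||$ stays uniformly bounded.

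Next I would bound the center component from below by a triangle inequality: since $\pi_A^{c}$ is $1$-Lipschitz and $x-z\in\widetilde{E}^{c}_A$,
$$
||\pi_A^{c}(x-y)||\ge ||x-z||-||\pi_A^{c}(z-y)||\ge ||x-z||-R_c\ge ||x-y||-||y-z||-R_c>||x-y||-2R_c .
$$
Combining the two estimates, for every $x$ and every $y\in\widetilde{W}^{c}_f(x)$ with $||x-y||>2R_c$,
$$
\frac{||\pi_A^{c\perp}(x-y)||}{||\pi_A^{c}(x-y)||}<\frac{R_c}{||x-y||-2R_c},
$$
and the right-hand side tends to $0$ as $||x-y||\to\infty$. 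Thus, given $\varepsilon>0$, it suffices to choose $M>2R_c$ large enough that $R_c/(M-2R_c)<\varepsilon$; the required inequality then holds for all $x$ and all $y\in\widetilde{W}^{c}_f(x)$ with $||x-y||>M$. Because $R_c$ and hence $M$ do not depend on the base point, this is precisely the claimed uniform convergence $\frac{x-y}{||x-y||}\to\widetilde{E}^{c}_A(x)$.

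There is no serious obstacle in this argument; it is a two-line estimate once Proposition~\ref{Prop Hamm 3} is available. The only points that require care are (i) identifying $\widetilde{W}^{c}_A(x)$ with the affine subspace $x+\widetilde{E}^{c}_A$, so that the $E^{c\perp}_A$-projection kills the in-leaf displacements of $A$, and (ii) checking that the constant $R_c$ furnished by Proposition~\ref{Prop Hamm 3} is genuinely uniform in $x\in\mathbb{R}^d$, which is exactly what upgrades the pointwise limit to a uniform one.
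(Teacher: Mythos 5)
Your derivation is correct. The paper states this Corollary as a result cited from Hammerlindl~\cite{H} and does not reprove it, so there is no in-paper argument to compare against; your route — use the uniform $R_c$ from Proposition~\ref{Prop Hamm 3} to pick $z\in\widetilde{W}^c_A(x)=x+\widetilde{E}^c_A$ with $\|y-z\|<R_c$, observe $\pi_A^{c\perp}$ kills $x-z$, hence $\|\pi_A^{c\perp}(x-y)\|<R_c$ while $\|\pi_A^{c}(x-y)\|>\|x-y\|-2R_c$, and let $\|x-y\|\to\infty$ — is the natural one and every step checks out, with uniformity in $x$ correctly inherited from the uniformity of $R_c$.
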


From Brin, \cite{B} we have.

\begin{proposition}[\cite{B}]\label{B1} Let $W$ be a $k-$dimensional foliation of $\mathbb{R}^d.$ Suppose that there is an $(d - k)$ dimensional plane $P$ such that $T_x W(x)\cap P = \{0\}$ and $\angle(T_x W(x), P) > \beta > 0$ for every $x \in \mathbb{R}^d.$ Then $W$ is quasi-isometric.
\end{proposition}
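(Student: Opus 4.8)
The plan is to show that the orthogonal projection along $P$ carries every leaf diffeomorphically, with uniformly bounded distortion, onto a fixed $k$-plane, and then to build short connecting paths inside leaves by lifting straight segments through this projection. First I would set $E=P^{\perp}$ and let $\pi\colon\mathbb{R}^d\to E$ be the orthogonal projection. The hypotheses $T_xW(x)\cap P=\{0\}$ and $\angle(T_xW(x),P)>\beta>0$ say exactly that every unit vector $v\in T_xW(x)$ satisfies $\|\pi(v)\|\ge c$ for some $c=c(\beta)>0$ (one may take $c=\sin\beta$); in particular $\pi|_{T_xW(x)}\colon T_xW(x)\to E$ is a linear isomorphism, so by the inverse function theorem $\pi|_{W(x)}\colon W(x)\to E$ is a local diffeomorphism, and along any $C^1$ curve $\gamma$ lying in a leaf one gets $\operatorname{length}(\gamma)\le c^{-1}\operatorname{length}(\pi\circ\gamma)$.

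Next I would promote this to a global statement: for each leaf $L$, the map $\pi|_{L}\colon L\to E$ is a diffeomorphism. It is a local diffeomorphism whose inverse differentials are uniformly bounded, $\|(D\pi|_L)^{-1}\|\le c^{-1}$, so every $C^1$ path $\eta$ in $E$ admits a lift to $L$ from a prescribed starting point: a maximal partial lift has length at most $c^{-1}\operatorname{length}(\eta)$, hence stays in a bounded region, and therefore extends to the full parameter interval. This makes $\pi|_L$ a covering map of $E\cong\mathbb{R}^k$, and since $\mathbb{R}^k$ is simply connected, $\pi|_L$ is a global diffeomorphism; that is, every leaf is a graph over $E$.

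Finally, given $x,y$ on the same leaf $L$, I would lift the segment $\sigma(t)=(1-t)\pi(x)+t\pi(y)$ of $E$ to the path $\widetilde\sigma$ in $L$ with $\widetilde\sigma(0)=x$; by the previous step $\widetilde\sigma(1)=y$, because $\pi(\widetilde\sigma(1))=\pi(y)$ and $\pi|_L$ is injective. Then
\[
d_{\widetilde W}(x,y)\le\operatorname{length}(\widetilde\sigma)\le c^{-1}\operatorname{length}(\sigma)=c^{-1}\|\pi(x)-\pi(y)\|\le c^{-1}\|x-y\|,
\]
so $W$ is quasi-isometric with $Q=c^{-1}$ (the additive constant in the definition is not even needed here).

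The step I expect to be the main obstacle is the global extension of lifts used in the second paragraph: one must verify that a partial lift of bounded length cannot fail to extend, i.e. that leaves are complete in their induced Riemannian metric. This is where the uniformity of $\beta$ together with continuity of the tangent distribution is essential, since it yields foliation boxes of a uniform size adapted to $E$, enough to close up the limit of a bounded-length lift inside $L$; this is precisely the delicate ingredient in Brin's original argument, and everything else is a routine packaging of the expansion estimate from the first paragraph.
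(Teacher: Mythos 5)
Your argument is correct and follows essentially the same route as the cited Brin argument, which the paper in fact reproduces when it proves Claim 1 inside the proof of Proposition \ref{Prop vol center}: the uniform angle condition gives a uniform lower bound $c=\sin\beta$ on $\|D\pi\|$ restricted to leaves, the inverse function theorem with that uniform bound yields foliated boxes of uniform size making $\pi|_L$ a covering of $\mathbb{R}^k\cong P^{\perp}$, simple connectedness upgrades it to a global bi-Lipschitz diffeomorphism, and lifting straight segments gives $d_{\widetilde W}(x,y)\le c^{-1}\|x-y\|$. You also correctly identify the only delicate step (completeness of the lift, i.e.\ that a bounded-length partial lift closes up inside the same leaf), which is exactly the point the paper handles via the ``open and closed'' argument using the uniform $\delta$ and $\varepsilon$ from the inverse function theorem.
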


\begin{theorem}[\cite{B}]\label{B2} Let $f: M \rightarrow M$ a partially hyperbolic diffeomorphism. If $ W^s_f$ and $ W^u_f$ are quasi-isometric foliations, then $f$ is dynamically coherent.
\end{theorem}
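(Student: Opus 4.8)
It suffices to integrate the bundles $E^{cs}=E^c\oplus E^s$ and $E^{cu}=E^c\oplus E^u$: once these are uniquely integrable to (automatically invariant) foliations $W^{cs}$ and $W^{cu}$, one sets $W^c:=W^{cs}\cap W^{cu}$, and since $E^{cs}+E^{cu}=TM$ this intersection is transverse, so $W^c$ is a foliation whose $C^1$ leaves are tangent to $E^{cs}\cap E^{cu}=E^c$ --- which is exactly dynamical coherence in the sense of the definition above. The two bundles are handled symmetrically, exchanging the roles of $W^s$ and $W^u$, so I only describe the construction of $W^{cs}$.

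The plan is to pass to the universal cover $\widetilde M$, where $\widetilde W^s$ and $\widetilde W^u$ are quasi-isometric, and to work with the leaf space $Q:=\widetilde M/\widetilde W^s$ of the stable foliation. The first block of work records the consequences of the hypothesis: quasi-isometry forces the stable (and unstable) leaves to be uniformly properly embedded and ``quasi-flat'' --- a large intrinsic displacement along a leaf produces a large ambient displacement, so no leaf comes back near itself and distinct leaves stay uniformly apart --- and from this one shows that $Q$ is a simply connected manifold of dimension $d_c+d_u$ and that $\pi\colon\widetilde M\to Q$ is a locally trivial fibration whose fibres are the stable leaves. Using in addition the quasi-isometry of $\widetilde W^u$, together with the simple connectedness of $\widetilde M$ to run a monodromy argument, one checks that the foliation $\widetilde W^u$ is compatible with this fibration, that is, $\pi(\widetilde W^u(x))$ depends only on $\pi(x)$; this produces a $d_u$-dimensional foliation $\widehat W^u$ of $Q$, permuted and expanded by the diffeomorphism $\widehat f\colon Q\to Q$ induced by $\widetilde f$. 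Finally, $d\pi$ restricted to $E^{cu}$ is a fibrewise isomorphism onto $TQ$ conjugating $d\widehat f$ to $d\widetilde f|_{E^{cu}}$, so the splitting $E^c\oplus E^u$ descends to a dominated splitting $TQ=\widehat E^c\oplus\widehat E^u$ with $\widehat E^u=T\widehat W^u$ expanded, and $\widehat W^u$ is itself quasi-isometric in $Q$.

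The core step is to integrate the ``neutral'' distribution $\widehat E^c$ on $Q$. When $\dim E^c=1$ this is automatic, a continuous line field being integrable; when $E^c$ is uniformly expanded or uniformly contracted one gets the foliations from classical invariant-manifold theorems, with complete leaves and no appeal to quasi-isometry. In general I would use a graph transform in $Q$: iterate by $\widehat f^{-1}$ small $C^1$ disks, through a given point, tangent to a narrow cone field around $\widehat E^c$; the domination $\widehat E^c\prec\widehat E^u$ makes this cone field strictly $\widehat f^{-1}$-invariant and squeezes the tangent planes onto $\widehat E^c$, and a limit of such disks is a candidate integral leaf, giving a foliation $\widehat W^c$. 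Pulling everything back, $\widetilde W^{cs}:=\pi^{-1}(\widehat W^c)$ is $\widetilde f$-invariant and tangent to $E^s\oplus E^c=E^{cs}$, hence descends to the desired foliation $W^{cs}$ of $M$; the symmetric construction on $\widetilde M/\widetilde W^u$ produces $W^{cu}$, and $W^c=W^{cs}\cap W^{cu}$ completes the proof.

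The hard part --- and the only place where the full strength of the hypothesis is really needed --- is the passage from local to global: that $Q$ really is a manifold, that $\widehat W^u$ really is a foliation, and, above all, that the candidate leaves of $\widehat E^c$ coming out of the graph transform are properly embedded, pairwise disjoint, and glue into a $C^0$ foliation with $C^1$ leaves (and that the integrating foliation is unique). Without quasi-isometry an integral manifold of $\widehat E^c$ could spiral or bifurcate --- precisely the phenomenon that obstructs dynamical coherence for general partially hyperbolic diffeomorphisms --- while here the quasi-isometry of $\widetilde W^s$ and of $\widehat W^u$ keeps every object uniformly efficiently embedded and rules this out. When $M=\T^d$ this quasi-flatness is made concrete by the uniform large-scale comparison of $\widetilde f$ with its linearization recorded in Proposition \ref{Prop Hamm 1}.
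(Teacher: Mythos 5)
The statement you are proving is not proved in this paper at all: it is imported verbatim from Brin's article \cite{B}, so the relevant comparison is with Brin's own argument. Brin does not pass to the leaf space $Q=\widetilde M/\widetilde W^s$. He works directly in the universal cover with the Hirsch--Pugh--Shub plaque families tangent to $E^{cs}$ and $E^{cu}$, and shows that the unstable (resp.\ stable) saturation of a local $cu$-plaque (resp.\ $cs$-plaque) is a well-defined integral manifold and that two such plaques through nearby points either coincide locally or are disjoint. Quasi-isometry enters precisely there: it prevents an unstable leaf from re-intersecting a center-stable plaque and forces the saturations to glue consistently into a single foliation, giving unique integrability of $E^{cu}$ (and symmetrically $E^{cs}$). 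Your route, by contrast, tries to quotient away $W^s$ first and then treat the induced system on $Q$.

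Even setting aside the substantial unargued claims about $Q$ (that it is a manifold, that $\pi$ is a locally trivial fibration, that $\widetilde W^u$ genuinely descends -- none of these is free even with quasi-isometry), the ``core step'' is a genuine gap rather than a deferred detail. After reducing to $Q$ you are left with a partially hyperbolic system $\widehat f$ with splitting $\widehat E^c\oplus\widehat E^u$, $\widehat E^u$ expanded, and $\widehat W^u$ quasi-isometric, and you want to integrate $\widehat E^c$. This is not an easier problem: it is exactly the special case $\dim E^s=0$ of the theorem you are trying to prove, so the reduction is essentially circular unless you supply an independent argument in that case. The graph-transform paragraph does not supply one: the $\widehat f^{-1}$-invariance of the cone around $\widehat E^c$ produces candidate disks, but it gives neither convergence nor uniqueness of the limit, and distinct choices of initial disk can a priori produce genuinely different ``integral'' manifolds through the same point -- this nonuniqueness is precisely the known obstruction to dynamical coherence. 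You acknowledge that quasi-isometry of $\widehat W^u$ must somehow rule this out, but you never say how it enters the graph-transform estimate, and that is exactly the content of Brin's Lemma. Also, ``a continuous line field is integrable'' in the $\dim E^c=1$ case gives integral curves through each point, not a foliation, and certainly not unique integrability, so even the base case of your argument is incomplete. As written, the proposal reorganizes the problem but does not solve the part that makes it nontrivial.
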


\section{Proof of Theorem A}

\begin{lemma}
Let  $f$ be as in the Theorem A, then the foliation $\widetilde{W}^c_f$ is quasi-isometric.
\end{lemma}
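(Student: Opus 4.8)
The plan is to obtain the lemma as a direct application of Brin's quasi-isometry criterion, Proposition \ref{B1}, to the lifted center foliation $\widetilde{W}^c_f$ of $\mathbb{R}^d$, taking as transversal complement the fixed plane $P := (\widetilde{E}^c_A)^\perp$. Note first that hypothesis $3)$ of Theorem \ref{Teo E} makes $f$ dynamically coherent, so $E^c_f$ integrates to a genuine foliation $W^c_f$ of $\mathbb{T}^d$, whose lift $\widetilde{W}^c_f$ is an honest $d_c$-dimensional foliation of the universal cover $\mathbb{R}^d$. Since $A$ is linear, $\widetilde{E}^c_A$ is one and the same $d_c$-dimensional subspace of $\mathbb{R}^d$ at every point, so $P = (\widetilde{E}^c_A)^\perp$ is a single $(d-d_c)$-dimensional plane; and by hypothesis $1)$ the dimensions are complementary, $\dim T_x\widetilde{W}^c_f(x) + \dim P = d_c + (d-d_c) = d$ for every $x \in \mathbb{R}^d$.

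Next I would verify the two hypotheses of Proposition \ref{B1} for the pair $(\widetilde{W}^c_f, P)$. The angle condition $\angle(T_x\widetilde{W}^c_f(x), P) > \alpha > 0$ for all $x$ is precisely hypothesis $3)$; and from it the transversality $T_x\widetilde{W}^c_f(x) \cap P = \{0\}$ is immediate, since a nonzero vector common to $T_x\widetilde{W}^c_f(x)$ and $P$ would make that angle zero. Proposition \ref{B1} then yields that $\widetilde{W}^c_f$ is quasi-isometric, which is exactly the conclusion.

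There is no real obstacle here: the only points deserving a line of care are that the strict inequality $\angle(\cdot,\cdot) > \alpha > 0$ is what forces $T_x\widetilde{W}^c_f(x) \cap P = \{0\}$, and that hypothesis $1)$ supplies the complementary dimensions needed for $P$ to play exactly the role of the plane in Proposition \ref{B1}. Hypothesis $2)$ is not used in this lemma; it enters only later, in the volume-comparison step of the proof of Theorem \ref{Teo E}.
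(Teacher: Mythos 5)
Your proposal is correct and follows exactly the paper's own argument: apply Proposition \ref{B1} to $\widetilde{W}^c_f$ with $P = (\widetilde{E}^c_A)^\perp$, using hypothesis $3)$ of Theorem \ref{Teo E} to supply the uniform angle bound (and hence the transversality). The only thing you add is the routine verification of the dimension count and the observation that the angle bound forces $T_x\widetilde{W}^c_f(x)\cap P=\{0\}$, which the paper leaves implicit.
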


\begin{proof}
We just need to apply the Proposition \ref{B1}, doing $P=(\widetilde{E}^c_A)^{\bot}.$ By the third item of the assumptions of Theorem A, we have
$\angle(T_x\widetilde{W}^c_f,P)>\alpha>0$, thus by the Proposition \ref{B1}, $\widetilde{W}^c_f$ is
quasi-isometric.
\end{proof}

\begin{proposition}\label{Prop vol center}
Let   $f:{\T}^d\rightarrow {\T}^d$ and $A$ be as in the Theorem A,

then given  $\varepsilon>0$, there is  $M > 0$ such that
$$
\vol(\tilde{f}^n\pi^{-1}(R))\leq C_0(1+\varepsilon)^{nd_c}e^{n\sum\lambda_i^c(A)}\vol(R)
$$
 for any $n,$  where $\pi$ is the  orthogonal projection from  $\widetilde{W}^c_f(z)$ to $\widetilde{E}^c_A(z)$ (parallel to $(\widetilde{E}^c_A)^{\bot}$) and  $R$ is a hypercube in  $\widetilde{E}^c_A(z)$ contained $z,$ with dimension equal to  $d$ and the length of each edge is bigger than  $M.$
\end{proposition}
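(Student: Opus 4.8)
The plan is to estimate the $d_c$-dimensional volume of the center-leaf piece $\tilde f^{\,n}(\pi^{-1}(R))$ by projecting it orthogonally onto an affine copy of $\widetilde E^c_A$ and controlling the resulting set through the eigenvalue arithmetic of $A|_{E^c_A}$. Translate so that $z=0$; let $\Pi^c$ be the linear orthogonal projection of $\R^d$ onto $\widetilde E^c_A$ (so $\pi=\Pi^c|_{\widetilde W^c_f(0)}$), and let $\pi^{cc}$ be the projection onto $\widetilde E^c_A$ along the $A$-invariant complement $\widetilde E^s_A\oplus\widetilde E^u_A$, so that $\pi^{cc}A=A\pi^{cc}$. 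Fix real unit eigenvectors $v_1,\dots,v_{d_c}$ of $A$ spanning $\widetilde E^c_A$, with eigenvalues $\mu_i$, $|\mu_i|=e^{\lambda_i^c(A)}>1$ by hypothesis~(2), and write $(\cdot)_i$ for the $v_i$-coordinate. Finally write $\tilde f=A+P$ with $P$ being $\Z^d$-periodic, so $K_0:=\sup\|P\|<\infty$, and record the telescoping identity $\tilde f^{\,n}-A^n=\sum_{j=0}^{n-1}A^{\,n-1-j}\circ P\circ\tilde f^{\,j}$.

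I would first isolate two uniform facts. By Proposition~\ref{Prop Hamm 3} every lifted center leaf of $f$ lies in the $R_c$-neighbourhood of an affine $\widetilde E^c_A$-plane; since $\pi^{cc}$ annihilates $\widetilde E^c_A$, this gives for any $w$ on a center leaf with base point $y$ the bounds $\|\pi^{cc}(w-y)\|\le\|\pi^{cc}\|R_c$ and $\|(\Pi^c-\pi^{cc})(w-y)\|\le K_4:=(1+\|\pi^{cc}\|)R_c$. Next, by hypothesis~(3) the tangent planes of $\widetilde W^c_f$ make angle $\ge\alpha$ with $\ker\Pi^c=(\widetilde E^c_A)^\bot$, so the restriction of $\Pi^c$ to any $C^1$ surface tangent to $T\widetilde W^c_f$ is a local diffeomorphism with Jacobian $\ge(\sin\alpha)^{d_c}$ (it is $1/\sqrt{\det(I+(Dg)^{*}Dg)}$ for the local graph $g$, and $\|Dg\|\le\cot\alpha$); combining local injectivity with the previous bound and the quasi-isometry of $\widetilde W^c_f$ from the Lemma, $\Pi^c$ has multiplicity at most a uniform constant $N_0$ on any lifted center leaf (one in fact expects it to be a homeomorphism onto the plane). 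Writing $\Sigma:=\tilde f^{\,n}(\pi^{-1}(R))\subset\widetilde W^c_f(\tilde f^{\,n}(0))$ and letting $\pi_n$ be the orthogonal projection onto $\tilde f^{\,n}(0)+\widetilde E^c_A$, the area formula then yields
$$
\vol\big(\tilde f^{\,n}(\pi^{-1}(R))\big)\ \le\ (\sin\alpha)^{-d_c}\!\int_\Sigma|\jac(\pi_n|_\Sigma)|\ \le\ N_0(\sin\alpha)^{-d_c}\,\vol\big(\pi_n(\Sigma)\big).
$$

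It remains to bound $\vol(\pi_n(\Sigma))$, and here hypothesis~(2) enters decisively. Put $S_n:=\pi_n(\Sigma)-\tilde f^{\,n}(0)\subset\widetilde E^c_A$; applying the first fact with base point $\tilde f^{\,n}(0)$ gives $S_n\subset\pi^{cc}\big(\tilde f^{\,n}(\pi^{-1}(R))-\tilde f^{\,n}(0)\big)\oplus B_{K_4}$. For $w\in\pi^{-1}(R)$ the telescoping identity together with $\pi^{cc}A=A\pi^{cc}$ and $\pi^{cc}(0)=0$ gives
$$
\pi^{cc}\big(\tilde f^{\,n}(w)-\tilde f^{\,n}(0)\big)=A^n\pi^{cc}(w)+\sum_{j=0}^{n-1}(A|_{\widetilde E^c_A})^{\,n-1-j}\,\pi^{cc}\big(P(\tilde f^{\,j}w)-P(\tilde f^{\,j}0)\big).
$$
Reading off the $v_i$-coordinate with $|(A^k u)_i|=|\mu_i|^k|u_i|$: the first term contributes at most $|\mu_i|^n(a_i+K_4')$, where $R$ is contained in the box $\prod_i\{|(\cdot)_i|\le a_i\}$ with each $a_i$ comparable to the edge length, hence $\ge cM$ (this replacement of $R$ by an eigen-adapted box costs a further factor depending only on $A$, absorbed below); the sum contributes at most $\big(2K_0\|\pi^{cc}\|/(|\mu_i|-1)\big)|\mu_i|^n$, the geometric series $\sum_j|\mu_i|^{\,n-1-j}$ being comparable to its last term precisely because $|\mu_i|>1$. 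Hence $|(\pi^{cc}(\tilde f^{\,n}w-\tilde f^{\,n}0))_i|\le|\mu_i|^n(a_i+K^{*})$ for a constant $K^{*}=K^{*}(f,A)$ and all $i$, i.e. $\pi^{cc}(\tilde f^{\,n}(\pi^{-1}(R))-\tilde f^{\,n}(0))\subset A^n(Q^{+})$ with $Q^{+}:=\prod_i\{|(\cdot)_i|\le a_i+K^{*}\}$. Since $\prod_i|\mu_i|^n=e^{\,n\sum_i\lambda_i^c(A)}$ and every edge of $A^n(Q^{+})$ has length $\ge|\mu_i|^n a_i\ge cM$, enlarging $A^n(Q^{+})$ by $B_{K_4}$ and passing from $Q$ to $Q^{+}$ each cost only a factor $(1+\mathrm{const}/M)^{d_c}$, so
$$
\vol\big(\tilde f^{\,n}(\pi^{-1}(R))\big)\ \le\ N_0(\sin\alpha)^{-d_c}(1+C_1/M)^{d_c}\,e^{\,n\sum_i\lambda_i^c(A)}\,\vol(R).
$$
Choosing $M$ large enough, depending only on $\varepsilon$ (and on $f,A,\alpha$), that $(1+C_1/M)^{d_c}\le(1+\varepsilon)^{d_c}\le(1+\varepsilon)^{nd_c}$ for $n\ge1$ — the case $n=0$ being the distortion bound alone — and setting $C_0:=N_0(\sin\alpha)^{-d_c}$ finishes the proof.

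The delicate point is the projection step: one must rule out that $\Pi^c$ simultaneously distorts and folds the leaf piece uncontrollably. Pointwise distortion is handled by the angle hypothesis~(3), but the uniform multiplicity bound $N_0$ genuinely needs the quasi-isometry of $\widetilde W^c_f$ together with the $R_c$-shadowing of Proposition~\ref{Prop Hamm 3}. A second thing to keep in mind is that the estimate must be uniform in $n$: this is why the displacement of $\tilde f^{\,n}$ from $A^n$ is treated through the telescoping sum rather than through Proposition~\ref{Prop Hamm 1} (whose threshold depends on $n$), and it is exactly the per-eigendirection control of that sum that uses $\lambda_i^c(A)>0$.
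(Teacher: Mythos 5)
Your argument is correct in substance, but it takes a genuinely different route from the paper's, and in fact proves something slightly sharper.

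The paper's proof of Proposition~\ref{Prop vol center} stays inside the leaf: via Claim~1 it shows $\pi:\widetilde W^c_f(z)\to\widetilde E^c_A$ is a uniformly bi-Lipschitz \emph{bijection}; via Claim~2 (a consequence of Corollary~\ref{Cor Hamm 2}) it compares $\|\tilde A^n\pi^{-1}(x)-\tilde A^n\pi^{-1}(y)\|$ with $e^{n\lambda_i^c(A)}\|\pi^{-1}(x)-\pi^{-1}(y)\|$ for endpoints in the $i$-th eigendirection; and via Claim~3 it establishes the \emph{set containment} $\tilde f^{\,n}(\pi^{-1}(R))\subset\pi^{-1}\bigl((1+\varepsilon)^n\tilde A^n(R)\bigr)$ by iterating the one-step estimate $\tilde f(\pi^{-1}(R))\subset\pi^{-1}((1+\varepsilon)\tilde A(R))$, maintaining the ``edge $>M$'' condition at each step precisely because $e^{\lambda_i^c(A)}>1$. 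The $(1+\varepsilon)^{nd_c}$ factor in the statement is the price of accumulating a multiplicative $(1+\varepsilon)$-error per iterate. You instead push the whole problem through the $A$-equivariant oblique projection $\pi^{cc}$, use the telescoping identity $\tilde f^{\,n}-A^n=\sum_{j}A^{n-1-j}\circ P\circ\tilde f^{\,j}$, and sum a geometric series whose ratio $e^{\lambda_i^c(A)}>1$ makes the total correction comparable to the last term. This is where hypothesis~(2) enters for you, replacing the paper's ``edges stay $>M$'' induction. The payoff is that your correction factor $(1+C_1/M)^{d_c}$ is uniform in $n$, which is strictly stronger than the claimed $(1+\varepsilon)^{nd_c}$; you then weaken it to match. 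Both proofs rely on the same three structural inputs — transversality (hypothesis~3) to control the Jacobian of the orthogonal projection, the $R_c$-shadowing of Proposition~\ref{Prop Hamm 3}, and positivity of the center eigenvalues — but deploy them differently: the paper in a one-step-at-a-time containment, you in a single global eigenvalue computation.

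Two small blemishes, neither fatal. First, the stated bound $\|\pi^{cc}(w-y)\|\le\|\pi^{cc}\|R_c$ is wrong, and so is the justification ``$\pi^{cc}$ annihilates $\widetilde E^c_A$'': $\pi^{cc}$ is the \emph{identity} on $\widetilde E^c_A$, and $w-y$ can be arbitrarily large in that direction. What you actually use in the displayed inclusion is the second bound $\|(\Pi^c-\pi^{cc})(w-y)\|\le K_4$, which is correct since $\Pi^c$ and $\pi^{cc}$ agree on $\widetilde E^c_A$ and the $R_c$-shadowing bounds the orthogonal remainder; so the argument goes through, but the first inequality should be deleted or replaced by a bound on $(I-\pi^{cc})(w-y)$. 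Second, your uniform multiplicity bound $N_0$ is asserted rather than proved; this is exactly the content of the paper's Claim~1, which shows $\pi$ is a covering map of a simply connected base and hence a diffeomorphism, so in fact $N_0=1$. Citing that (or proving the surjective-covering argument yourself, as the paper does via Proposition~\ref{Prop recob}) closes the only genuine gap.
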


To prove the above proposition we need the following auxiliar results:

\begin{lemma}\label{lema Lp}
Let  $f:M\rightarrow M$ be a Lipschitzian map, then there is $K > 0$ such that $|\jac f(x)|\leq K,$ for any $x\in M$ such that  $f$ is differentiable at $x.$
\end{lemma}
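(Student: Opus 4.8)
The plan is to bound the operator norm of $D_xf$ uniformly by a Lipschitz constant of $f$, and then control $\jac f(x)$ by a power of that constant. Since this estimate is pointwise and only involves linear algebra at points of differentiability, no regularity of $f$ beyond being Lipschitz is needed.

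Let $L>0$ be a Lipschitz constant for $f$ with respect to the Riemannian distance $d_M$ on $M$. Fix a point $x$ at which $f$ is differentiable and a unit vector $v\in T_xM$, and consider the curve $c(t)=f(\exp_x(tv))$, which satisfies $c(0)=f(x)$ and $c'(0)=D_xf(v)$. On one hand, for $t$ smaller than the injectivity radius at $x$ we have $d_M(x,\exp_x(tv))\le t$, so the Lipschitz condition gives $d_M(c(0),c(t))\le L\,t$. On the other hand, as $t\to 0^+$ one has $d_M(c(0),c(t))=\|c'(0)\|\,t+o(t)$. Dividing by $t$ and letting $t\to 0^+$ yields $\|D_xf(v)\|\le L$, and since $v$ was an arbitrary unit vector, $\|D_xf\|\le L$ in the operator norm.

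Now $\jac f(x)$ is, by definition, the absolute value of the determinant of $D_xf\colon T_xM\to T_{f(x)}M$ taken with respect to the volume elements induced by the Riemannian metric, which equals the product of the singular values $\sigma_1(x),\dots,\sigma_{\dime M}(x)$ of $D_xf$. Each singular value is bounded by $\|D_xf\|\le L$, hence
$$\jac f(x)=\prod_{j=1}^{\dime M}\sigma_j(x)\le L^{\dime M}.$$
Setting $K=L^{\dime M}$ completes the argument, and this bound does not depend on $x$. The only step that requires a modicum of care is the transition from the metric Lipschitz bound to the operator-norm bound on the differential, which is why it is phrased through the exponential chart; compactness of $M$ enters only implicitly, ensuring that $f$ admits a finite global Lipschitz constant in the first place.
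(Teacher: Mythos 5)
Your proof is correct and follows essentially the same strategy as the paper's: first bound the operator norm $\|D_xf\|$ by the Lipschitz constant $L$ (you do this cleanly via the exponential chart, the paper via a difference quotient), then bound the Jacobian. The only difference is in the final step, where you obtain the explicit bound $|\jac f(x)|\le L^{\dim M}$ from the singular values, whereas the paper invokes compactness of the ball $\{A:\|A\|\le L\}$ in matrix space together with continuity of $\det$; your version is a little more explicit but mathematically equivalent.
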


\begin{proof}[Proof of Lemma]
Let $L$ be the the Lipschitz constant of $f$, give  $x\in M$ such that $f$ is differentiable and  $v\in T_xM$, then
$$
||D_xf(v)||=\displaystyle\lim_{t\rightarrow 0}\left\|\frac{f(x + tv)-f(x)}{t}\right\|\leq\lim_{t\rightarrow 0}\frac{L||x+tv-x||}{|t|}=L||v||
$$
It implies that  $||D_xf||=\displaystyle\sup_{v\in T_xM}\frac{||D_xf(v)||}{||v||}\leq L.$

Since $\R^{n^2}$  is isomorphic to $M_n(\R),$ the space of all $n\times n-$matrixes with real coefficients,
by equivalence between norms of $\R^{n^2}$ we have \mbox{$\{A \in M_n(\R)|\; ||A||\leq L\}$} is compact.
Since $\det: M_n(\R) \rightarrow \R $ is continuous, then there is $K \geq 0$ such that
$$
|\jac f(x)|=|\det D_xf| \leq K.
$$
It concludes the proof.
\end{proof}

\begin{proposition}\label{Prop recob}
Let $f:X\rightarrow Y$ be a covering map such that $X$ is connected by path and $Y$ is simply connected, then $f$ is a homeomorphism.
\end{proposition}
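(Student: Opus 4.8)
The plan is to use the standard lifting criterion from covering space theory. Let $f : X \to Y$ be a covering map with $X$ path-connected and $Y$ simply connected. First I would observe that since $Y$ is simply connected, in particular it is path-connected and locally path-connected (covering maps are local homeomorphisms, so $Y$ inherits the relevant local properties; in our applications $Y$ is a manifold, so this is automatic). I want to construct a continuous section $s : Y \to X$ of $f$, i.e. a map with $f \circ s = \mathrm{id}_Y$, and then show that $s$ and $f$ are mutually inverse homeomorphisms.

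The key step is the existence of $s$. Fix a basepoint $y_0 \in Y$ and a point $x_0 \in X$ with $f(x_0) = y_0$. Apply the lifting criterion to the identity map $\mathrm{id}_Y : (Y, y_0) \to (Y, y_0)$: a continuous lift $s : (Y, y_0) \to (X, x_0)$ with $f \circ s = \mathrm{id}_Y$ exists if and only if $(\mathrm{id}_Y)_* \pi_1(Y, y_0) \subseteq f_* \pi_1(X, x_0)$ as subgroups of $\pi_1(Y, y_0)$. Since $Y$ is simply connected, $\pi_1(Y, y_0)$ is trivial, so the inclusion holds vacuously and the lift $s$ exists. Moreover $s$ is continuous by construction.

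It remains to check that $s$ is a two-sided inverse of $f$. By construction $f \circ s = \mathrm{id}_Y$. For the other composition, consider $s \circ f : X \to X$; it satisfies $f \circ (s \circ f) = (f \circ s) \circ f = f$, so $s \circ f$ is a lift of $f : X \to Y$ through the covering $f$ itself, and it agrees with $\mathrm{id}_X$ at the point $x_0$ (indeed $s(f(x_0)) = s(y_0) = x_0$). Since $X$ is connected and two lifts of the same map that agree at one point must agree everywhere (uniqueness of lifts for covering maps), we conclude $s \circ f = \mathrm{id}_X$. Hence $f$ is a bijection which is a local homeomorphism, so $f$ is a homeomorphism.

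I do not expect a serious obstacle here; the only point requiring a little care is making sure the hypotheses for the lifting criterion are in force, namely that $Y$ is locally path-connected (so that lifts of maps out of $Y$ are governed purely by the $\pi_1$ condition) and that $X$ is connected (so that uniqueness of lifts applies). Both are available: in the paper's setting $Y$ is (an open subset of) a manifold, hence locally path-connected, and $X$ is assumed path-connected, hence connected.
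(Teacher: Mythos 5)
The paper does not give a proof of Proposition~\ref{Prop recob} at all; it is stated as a standard fact from covering space theory and invoked directly in the proof of Claim~1 (to conclude that the local-homeomorphism-and-surjection $\pi$ is injective). Your argument via the lifting criterion -- lifting $\mathrm{id}_Y$ to a section $s$ using that $\pi_1(Y)$ is trivial, then invoking uniqueness of lifts on the connected space $X$ to get $s\circ f=\mathrm{id}_X$ -- is the standard textbook proof and is correct. You are also right to flag that the lifting criterion requires $Y$ to be locally path-connected; this hypothesis is not written in the proposition's statement but holds automatically in the paper's application, where $Y$ is the linear subspace $\widetilde{E}^c_A\subset\mathbb{R}^d$.
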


\begin{proof}[Proof of the Proposition \ref{Prop vol center}]

\textbf{Claim  1:} For  $z\in{\R}^d$, the orthogonal projection \mbox{$\pi: \widetilde{W}^c_f(z)\rightarrow \widetilde{E}^c_A$}
(parallel to  $(\widetilde{E}^c_A)^{\bot}$) is a uniform bi-Lipschitz diffeomorphism.
\begin{proof}[Proof of Claim 1]
The argument used here is similar to that used in \cite{B}, Proposition 4.  By item 3 of the Theorem \ref{Teo E} the plane $(\widetilde{E}_A^c)^{\bot}$ is transversal to
the foliation $\widetilde{W}^c_f$ then there is  $\beta>0$ such that
		\begin{equation}\label{eq10}
		||d\pi(y)(v)||\geq\beta||v||
		\end{equation}
for any $x\in \widetilde{W}^c_f(z)$ and $v\in T_x\widetilde{W}^c_f(z)$, it implies that $\jac \pi(x)\neq 0$,
by Inverse Function Theorem  for each $x\in \widetilde{W}^c_f(z)$ there is a ball
$B(\delta,x)\subset \widetilde{W}^c_f(z)$ such that $\pi|_{B(\delta,x)}$ is a diffeomorphism. From proof of Inverse
Function Theorem and Equation \ref{eq10}, $\delta$ can be taken independent of $x$. Again, by Equation \ref{eq10}
there is $\varepsilon>0$, independent of $x$ such that $B(\varepsilon,\pi(x))\subset \pi(B(\delta,x))$. For to prove which
 $\pi$ is  surjective, we will show that $\pi(\widetilde{W}^c_f(z))$ is open and closed set. As $\pi$ is a local homeomorphism, then
 $\pi(\widetilde{W}^c_f(z))$ is open. To verify that it is also closed let
$y_n\in \pi(\widetilde{W}^c_f(z))$ be a sequence converging to $y$, hence there is a $n_0$ enough large
such that $y\in B(\varepsilon,y_{n_0})$ and therefore $y\in \pi(\widetilde{W}^c_f(z))$, then $\pi$ is surjective.
Moreover $\pi$ is a covering map, in fact for any $y\in \widetilde{E}^c_A$ there is a neighborhood
$B(\varepsilon,y)$ with $\pi^{-1}(B(\varepsilon,y))=\bigcup U_i$, where $\pi:U_i\rightarrow B(\varepsilon,y)$ is
a diffeomorphism. The injectivity follows of the Proposition \ref{Prop recob}.

The map $\pi$ is Lipschitz.  In fact $||\pi(x)-\pi(y)||\leq||x-y||\leq d_{\widetilde{W}_f^c}(x,y)$. Let us show that
$\pi^{-1}$ is  Lipschitz.
From the Equation \ref{eq10},  consider $L=\frac{1}{\beta}$ such that  $||d\pi^{-1}(y)(v)||\leq L||v||$ for any
$y\in W^c_f(z)$ e $v\in T_yW^c_f(z)$. Let  $[x,y]$ be the rect segment in  $\widetilde{E}^c_A$ connecting $x$ to  $y,$ so the set
$\pi^{-1}([x,y])=\gamma$ is a smooth curve connecting the points $\pi^{-1}(x)$ and $\pi^{-1}(y)$ in $\widetilde{W}^c_f(z)$. So,
$$
d_{\widetilde{W}^c}(\pi^{-1}(x),\pi^{-1}(y))\leq {length}(\gamma)=\int_{[x,y]}|d\pi^{-1}(t)|dt\leq L||x-y||.
$$
\end{proof}

As $E^c(A)=E^c_1\oplus E^c_2\oplus\cdots\oplus E^c_{d_c}$, suppose that the edges of the  $d_c$-cube $R$ lies in hypercubes parallel to  $E^c_i.$ \\

As in  (Lemma 3.6, \cite{MT}), we can prove:

\textbf{Claim 2:} Let   $x=(x_1,\ldots,x_i,\ldots,x_{d_c})$ and
$y=(x_1,\ldots,y_i,\ldots,x_{d_c})$ with  $x,y\in\partial R$, $x - y \in E_i^c,$ then given    $n$ and  $\varepsilon>0$, there is $M>0$ such that, if $||x-y||>M,$ then
$$
(1-\varepsilon)e^{n\lambda_i^c(A)}||\pi^{-1}(x)-\pi^{-1}(y)||\leq||\tilde{A}^n\pi^{-1}(x)-\tilde{A}^n\pi^{-1}(y)||
\leq(1+\varepsilon)e^{n\lambda_i^c(A)}||\pi^{-1}(x)-\pi^{-1}(y)||
$$
for all  $i=1,2,\ldots,d_c$

\begin{figure}[!htb]
\centering
\includegraphics[scale=1.1]{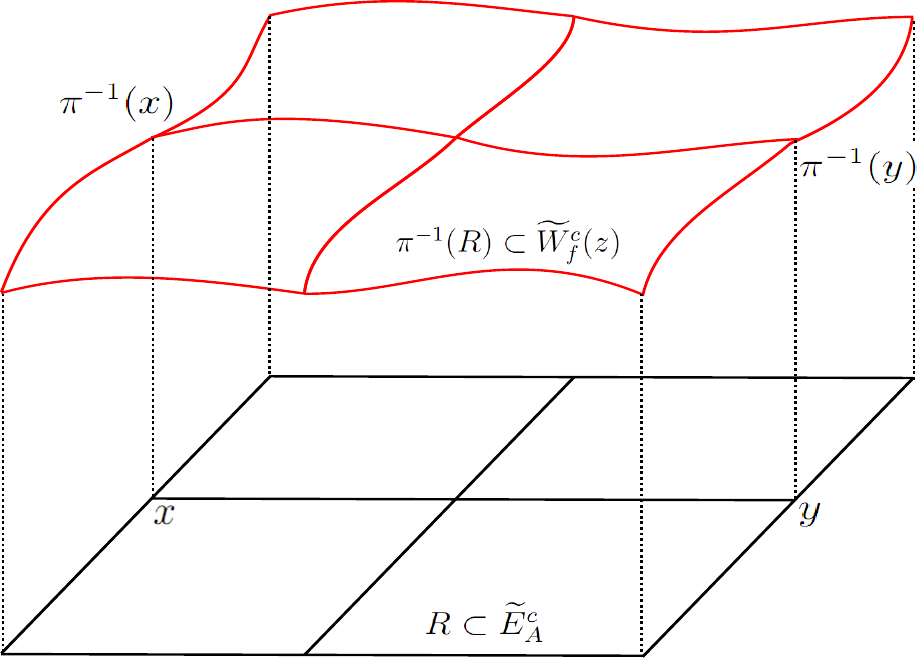}
\caption{}\label{cone1}
\end{figure}

\begin{proof}[Proof of Claim 2]
 Let $\widetilde{E}^c_i(A)$ be the eigenspace corresponding to  $\lambda_i^c(A)$ and  denote by  $\mu:=e^{\lambda_i^c(A)},$ the absolute value of the an eigenvalue $\tilde{A}$ corresponding $\widetilde{E}^c_i(A)$. By Corollary \ref{Cor Hamm 2} we have
$$
\frac{\pi^{-1}(x)-\pi^{-1}(y)}{||\pi^{-1}(x)-\pi^{-1}(y)||}=v_i^c+e_M
$$
where  $v_i^c$ is an  unit eigenvector of $\tilde{A}$ in $\widetilde{E}^c_i(A)$ direction  and  $e_M$ is a vector correction,
converges uniformly  to  zero when  $M \rightarrow +\infty.$ It follows that
$$
\tilde{A}^n\left(\frac{\pi^{-1}(x)-\pi^{-1}(y)}{||\pi^{-1}(x)-\pi^{-1}(y)||}\right)=\mu^n v_i^c+\tilde{A}^ne_M=
\mu^n\left(\frac{\pi^{-1}(x)-\pi^{-1}(y)}{||\pi^{-1}(x)-\pi^{-1}(y)||}\right)-\mu^n e_M + \tilde{A}^ne_M
$$
it implies that
$$
\frac{||\tilde{A}^n(\pi^{-1}(x)-\pi^{-1}(y))||}{||\pi^{-1}(x)-\pi^{-1}(y)||}=\left\|\mu^n\left(\frac{\pi^{-1}(x)-\pi^{-1}(y)}{||\pi^{-1}(x)-\pi^{-1}(y)||}\right)
-\mu^n e_M +\tilde{A}^ne_M\right\|
$$
thus
\begin{align*}
||\pi^{-1}(x)-\pi^{-1}(y)||&(\mu^n-\mu^n||e_M||-||\tilde{A}^n||||e_M||)\leq||\tilde{A}^n(\pi^{-1}(x)-\pi^{-1}(y))||\\
                           &\leq||\pi^{-1}(x)-\pi^{-1}(y)||(\mu^n+\mu^n||e_M||+||\tilde{A}^n||||e_M||)
\end{align*}
Since  $n$ is fixed and  $||e_M||\rightarrow 0$ when $M\rightarrow\infty$, choose a large  $M$ such that
$$
\mu^n||e_M||+||\tilde{A}^n||||e_M||\leq\varepsilon\mu^n
$$
we conclude that
$$
(1-\varepsilon)e^{n\lambda_i^c(A)}||\pi^{-1}(x)-\pi^{-1}(y)||\leq||\tilde{A}^n\pi^{-1}(x)-\tilde{A}^n\pi^{-1}(y)||
\leq(1+\varepsilon)e^{n\lambda_i^c(A)}||\pi^{-1}(x)-\pi^{-1}(y)||
$$
\end{proof}

\textbf{Claim 3:}  There is $M > 0,$ such that $\tilde{f}^n(\pi^{-1}(R)) \subset \pi^{-1}((1+ \varepsilon)^n\tilde{A}^n(R)),$ for every $n \geq 1,$ up to translation of $(1+ \varepsilon)^n\tilde{A}^n(R).$  We are considering $R$  a $d_c$ hypercube with edges parallel to each direction $E^c_i, i =1, \ldots, d_c,$ (see Figure 1), and the length of each edge of $R$ is at least $M.$

\begin{proof}

Since $||\tilde{f} - \tilde{A}|| < K,$ for some $K > 0,$ We have $$\tilde{A}(\pi^{-1}(R)) \subset B_K(\tilde{f}(\pi^{-1}(R)).$$
Moreover, $\tilde{A}(\pi^{-1}(R))$ projects on  $\tilde{A}(R),$ via $\pi.$
We can choose $M,$ very large, such that $\tilde{f}(\pi^{-1}(R)) \subset \pi^{-1}((1+ \varepsilon)\tilde{A}(R)),$ where $(1+ \varepsilon)\tilde{A}(R)$ denotes a dilatation of $\tilde{A}(R),$ by a factor $1+ \varepsilon.$  For this,  is suffices that
$$\lambda M (1+ \varepsilon ) > \lambda M + 2K,$$ for every $\lambda$ a center eigenvalue of $A.$ So it is sufficient that $\frac{2K}{\lambda M} < \varepsilon.$

Thus, for $n = 1, $ we have
\begin{equation}
\tilde{f}(\pi^{-1}(R)) \subset \pi^{-1}((1+ \varepsilon)\tilde{A}(R)),
\end{equation}
 up to a possible translation of $\tilde{A}(R).$

Now, denote $R_1 = (1+ \varepsilon)\tilde{A}(R) .$  It is important to note that
\mbox{$||\tilde{f}\pi^{-1}(x)-\tilde{f}\pi^{-1}(y)||>M.$} In fact, we can take $M$ satisfying also the Proposition \ref{Prop Hamm 1} and  Claim 2, so

\begin{align*}
||\tilde{f}\pi^{-1}(x)-\tilde{f}\pi^{-1}(y)||&\geq (1+\varepsilon)^{-1}||
\tilde{A}\pi^{-1}(x)-\tilde{A}\pi^{-1}(y)||\\
&\geq (1+\varepsilon)^{-1}(1-\varepsilon)e^{\lambda^c_i(A)}||\pi^{-1}(x)-\pi^{-1}(y)||\\
&\geq ||x - y|| \geq M,
\end{align*}

if $\varepsilon$ is enough small.

So we can do the same process before, for $R_1,$ we obtain:
$$\tilde{f}(\pi^{-1}(R_1)) \subset \pi^{-1}((1+ \varepsilon)\tilde{A}(R_1)),$$

Since $\tilde{f}^2(\pi^{-1}(R)) \subset \tilde{f}(\pi^{-1}(R_1)),$ it implies
$$\tilde{f}^2(\pi^{-1}(R)) \subset \pi^{-1}((1+ \varepsilon)^2\tilde{A}^2(R)). $$

Following this inductive argument, we conclude that

\begin{equation}\label{contem}
\tilde{f}^n(\pi^{-1}(R)) \subset \pi^{-1}((1+ \varepsilon)^n\tilde{A}^n(R)),
\end{equation}
for every $n \geq 1.$ \end{proof}

By Lemma \ref{lema Lp} follows that
\begin{align*}
\vol(\tilde{f}^n\pi^{-1}(R))&\leq\vol(\pi^{-1}((1+\varepsilon)^{2n}\tilde{A}^n(R))= C_0(1+ \varepsilon)^{nd_c} e^{n \sum_{i = 1}^{d_c} \lambda^c_i(A)}\vol(R),
\end{align*}
The constant  $C_0,$ comes from Lemma \ref{lema Lp}. It concludes the proof of the Proposition \ref{Prop vol center}.
\end{proof}

\begin{proof}[Proof of Theorem \ref{Teo E}]

Suppose by contradiction, $W^c_f$ is absolutely continuous and there exists a positive volume set  $Z\in {\T}^d$ such that
$\sum_i\lambda_i^c(f,x)>\sum_i\lambda_i^c(A)$ for any $x\in Z$.

Let $P:{\R}^d\rightarrow {\T}^d$ be the covering map, $D\subset{\R}^d$ a fundamental domain  and $\widetilde{Z}=P^{-1}(Z)\cap D$.  We have $\vol(\widetilde{Z})>0$. Since $D\tilde{f}^n$ and $Df^n$ are conjugate matrices, $ D\tilde{f}^n=DP^{-1}\circ Df^n\circ DP$, it follows that
$\sum_i\lambda_i^c(\tilde{f},x)>\sum_i\lambda_i^c(A)$ for any $x\in \widetilde{Z}$.

For each $q\in {\N}-\{0\}$ we define the set
$$
Z_q=\left\{x\in \widetilde{Z};\sum_i\lambda_i^c(\tilde{f},x)>\sum_i\lambda_i^c(A)+\log\left(1+\frac{1}{q}\right)\right\}.
$$
We have $\bigcup_{q=1}^{\infty}Z_q=\widetilde{Z}$, thus there is  $q$ such that $m(Z_q)>0$. For each $x\in Z_q$ follows that
$$
\displaystyle\lim_{n\rightarrow\infty}\frac{1}{n}\log|\jac \tilde{f}^n(x)|_{\widetilde{E}^c}|>\sum_i\lambda_i^c(A)+\log\left(1+\frac{1}{q}\right).
$$
So there is  $n_0$ such that for   $n\geq n_0$  we have
\begin{align*}
\frac{1}{n}\log|\jac \tilde{f}^n(x)|_{\widetilde{E}^c}|&>\sum_i\lambda_i^c(A)+\log\left(1+\frac{1}{q}\right)\\
                                         &>\frac{1}{n}\log e^{n\sum_i\lambda_i^c(A)}+\frac{1}{n}\log\left(1+\frac{1}{q}\right)^{n},
\end{align*}
it implies that
$$
|\jac \tilde{f}^n(x)|_{\widetilde{E}^c}|>\left(1+\frac{1}{q}\right)^{n}e^{n\sum_i\lambda_i^c(A)}.
$$
By this fact, for every $n>0$ we define
$$
Z_{q,n}=\left\{x\in Z_q;|\jac \tilde{f}^k(x)|_{\widetilde{E}^c}|>\left(1+\frac{1}{q}\right)^{k}e^{k\sum_i\lambda_i^c(A)},\,\,\forall\,\,k\geq n \right\}.
$$
There is $N>0$ with $\vol(Z_{q,N})>0,$ for some integer $q > 0.$

 For each $x\in D$, consider $B_x\subset{\R}^d$ a foliated box of  $\widetilde{W}^c_f.$
By compactness there is finite cover $\{B_{x_i}\}_{i=1}^{j}$ covering $\overline{D}.$ Since $W^c_f$ is absolutely continuous and the covering map is smooth, then  $\widetilde{W}^c_f$ is absolutely continuous, thus there is some $i$ and $p\in B_{x_i}$
such that $\vol(B_{x_i}\cap \widetilde{W}^c_f(p)\cap Z_{q,N})>0$.

There is a set $\pi^{-1}(R)\subset \widetilde{W}^c_f(p)$, where $\pi^{-1}(R)$ is as in the Proposition \ref{Prop vol center}
containing $p$ and the edges of the  cube $R$ are enough large
such that \mbox{$\vol(\pi^{-1}(R)\cap Z_{q,N})>0.$} Choose a   number $\varepsilon > 0$ as in Proposition \ref{Prop vol center}, such that $(1 + \varepsilon)^{d_c} < \left(1 + \frac{1}{q}\right).$ By Proposition \ref{Prop vol center} we have

\begin{equation}\label{eq4}
\vol(\tilde{f}^n(\pi^{-1}(R)))\leq C_0(1+\varepsilon)^{n d_c} e^{n\sum\lambda_i^c(A)}\vol(R),
\end{equation}
for any $n \geq 1.$

On the other hand, let  $\alpha>0$ be such that $\vol(\pi^{-1}(R)\cap Z_{q,N})=\alpha \vol(\pi^{-1}(R)),$ for $n > N,$ we have
\begin{align}\label{eq5}
\vol(\tilde{f}^n(\pi^{-1}(R)))&=\displaystyle\int_{\pi^{-1}(R)}|\jac \tilde{f}^n(x)|_{\widetilde{E}^c}| d \vol \nonumber\\
                &\geq\displaystyle\int_{\pi^{-1}(R)\cap Z_{q,N}}|\jac \tilde{f}^n(x)|_{\widetilde{E}^c}| d \vol\nonumber\\
                &>\displaystyle\int_{\pi^{-1}(R)\cap Z_{q,N}}\left(1+\frac{1}{q}\right)^{n}e^{n\sum_i\lambda_i^c(A)}d \vol\nonumber\\
                &>\left(1+\frac{1}{q}\right)^{n}e^{n\sum_i\lambda_i^c(A)} \vol(\pi^{-1}(R)\cap Z_{q,N})\nonumber\\
                &>\left(1+\frac{1}{q}\right)^{n}e^{n\sum_i\lambda_i^c(A)} \alpha\vol(\pi^{-1}(R))\nonumber\\
                &>\left(1+\frac{1}{q}\right)^{n}e^{n\sum_i\lambda_i^c(A)} \alpha\vol(R).
\end{align}
The equations $(\ref{eq4})$ and $(\ref{eq5})$ given us a contradiction when  $n$ is enough large, thus proving the Theorem \ref{Teo E}.

\end{proof}

\section{Proof of Theorem C}

For the construction we need the following results.

\begin{proposition}[\cite{BB}]\label{babo} Let $(M,m)$ be a compact manifold endowed with a $C^r$ volume
	form, $r \geq 2.$ Let $f$  be a $C^1$ and $m-$preserving diffeomorphisms of $M,$  admitting a dominated
	partially hyperbolic splitting $TM= E^s
	\oplus E^c \oplus E^u.$ Then there are arbitrarily $C^1-$close
	and $m-$preserving perturbation $g$ of $f,$ such that
	
	$$\displaystyle \int_M \log(J^c_g(x))dm > \displaystyle \int_M \log(J^c_f(x))dm,$$
	where $J^c_f(x)$ is the absolute value of the determinant of $Df$  restricted to $E^c_f(x).$
\end{proposition}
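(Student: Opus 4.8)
The proof is by localized volume-preserving ``rotation'' perturbations. Since a dominated partially hyperbolic splitting with prescribed dimensions of the bundles is a $C^1$-open condition, every $g$ that is $C^1$-close enough to $f$ has an invariant splitting $TM = E^s_g \oplus E^c_g \oplus E^u_g$ with $\dime E^\sigma_g = \dime E^\sigma_f$, the bundles depending continuously on $g$; moreover, by domination, $E^c_g(x)$ depends on the piece of orbit $\{g^k x : |k|\le n\}$ up to an error exponentially small in $n$, a fact used below to control a localized perturbation away from its support. Using invariance of $E^c_g$ one has the exact identity
\[
\int_M \log J^c_g \, dm = \int_M \log \left| \det\!\big(Dg(x)|_{E^c_g(x)}\big)\right| dm,
\]
so only the one-step restricted Jacobian, not an asymptotic quantity, has to be compared.

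For the local mechanism, fix a Lyapunov-regular, recurrent point $x_0$ that is a density point of a positive-measure set on which the Oseledets splitting and the center exponents are defined, and work in a chart around $f(x_0)$ in which $E^s_f, E^c_f, E^u_f$ are almost constant and orthogonal. Let $h$ be a volume-preserving diffeomorphism, $C^1$-close to the identity, supported on a small ball $B$ centered at $f(x_0)$ and equal, on a slightly smaller ball, to a rotation $R_\theta$ by a small angle $\theta$ in a $2$-plane spanned by a unit vector of $E^c_f$ and a unit vector of $E^u_f$; set $g = h \circ f$. Because an isometric rotation preserves Euclidean norms, a direct computation with the linear model $R_\theta \circ Df(x_0)$ shows that the $g$-invariant continuation of the center direction is tilted by an angle of order $\theta$ towards $E^u$, while
\[
\log J^c_g(x_0) - \log J^c_f(x_0) = \tfrac12\, c(x_0)\, \theta^2 + o(\theta^2), \qquad c(x_0) > 0,
\]
with \emph{no} first-order term, the positivity of $c(x_0)$ coming precisely from the fact that $E^u$ is more expanded than $E^c$; the mirror computation gives a decrease of $\log J^u_g$ by the same second-order amount, consistently with $h$ preserving volume.

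It remains to promote this pointwise, second-order gain to a strict increase of the integral. Split $M$ into $f^{-1}(B)$, where $Dg \neq Df$ and the computation above contributes a gain bounded below by a constant times $\theta^2 m(B)$, and its complement, where $Dg = Df$ but still $E^c_g \neq E^c_f$. On the complement the first-order variation of the integrand, coming only from the displacement of the bundle, integrates to zero: telescoping along orbits together with $f_*m = m$ turns it into an exact term — this is exactly where one uses again that the infinitesimal perturbation is a (divergence-free) rotation — so the remaining contribution is $O(\|E^c_g(x) - E^c_f(x)\|^2)$, which by the exponential decay of $\|E^c_g - E^c_f\|$ in the number of iterates separating $x$ from the orbit of $x_0$ sums to $O(\theta^2 m(B))$. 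The crux is then a quantitative comparison: one must arrange the shape and location of the perturbation so that the genuine gain strictly dominates this second-order error. I expect this to be the main obstacle, and the natural way around it is to distribute infinitesimally small rotations over a long, almost-disjoint Rokhlin tower $B, f(B), \dots, f^{N-1}(B)$ exhausting all but an arbitrarily small fraction of $M$: then the gain accumulates to a definite fraction of $m(M)$ along the whole orbit while the error region and the $C^1$-size $\max_i \theta_i$ are kept as small as one wishes. Everything else — robustness of domination, the exact Birkhoff identity, the norm-preserving linear computation, and the existence of the volume-preserving bump $h$ — is routine.
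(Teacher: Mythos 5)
The paper does not prove this proposition; it is quoted from Baraviera--Bonatti \cite{BB} and used as a black box, so there is no ``paper's own proof'' to compare against. Judged against the actual argument in \cite{BB}, your sketch has the right local mechanism (a small volume-preserving rotation in a plane mixing $E^c$ and $E^u$, a vanishing first-order term, and a second-order gain proportional to the domination), but it stalls at what you yourself call ``the main obstacle,'' and the fix you propose there does not work and is not needed.

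The obstacle you describe is illusory. You treat the displacement of $E^c_g$ away from $E^c_f$ on $M\setminus f^{-1}(B)$ as an uncontrolled $O(\theta^2)$ \emph{error} competing with the $O(\theta^2)$ gain inside $f^{-1}(B)$. In fact, for a rotation-type perturbation (with the splitting orthogonalized by an adapted metric, which is literally the case in the linear-torus application in this paper), the effect is \emph{pointwise} nonnegative. Concretely, write $E^c_g(x)$ as the graph of a small $L(x)\colon E^c_f(x)\to E^u_f(x)$. Since a rotation preserves the adapted norm, for $v\in E^c_f(x)$ one gets
\begin{equation*}
\frac{J^c_g(x)}{J^c_f(x)} \;=\; \sqrt{\frac{1+\dfrac{\|Df(x)|_{E^u}\,L(x)v\|^2}{\|Df(x)|_{E^c}\,v\|^2}}{1+\dfrac{\|L(x)v\|^2}{\|v\|^2}}} \;\ge\; 1,
\end{equation*}
with strict inequality wherever $L(x)\neq 0$, precisely because domination gives $\dfrac{\|Df|_{E^u}Lv\|}{\|Lv\|}>\dfrac{\|Df|_{E^c}v\|}{\|v\|}$. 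So the tilt of the bundle off the support of the perturbation is \emph{additional gain}, not loss, and the integral strictly increases the moment $L\not\equiv 0$. This positivity-through-domination is the actual crux of \cite{BB}, and it is the idea missing from your write-up.

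The Rokhlin-tower workaround you suggest would not help even if the competition were real: distributing rotations by $\theta_i$ over the levels of a tower produces a total gain of order $\max_i\theta_i^2$ and bundle tilts of order $\max_i\theta_i$, so the gain-to-``error'' ratio is exactly the same as for a single localized bump; nothing improves as the tower gets taller. It is also unnecessary once the pointwise sign is established. The rest of your outline (robustness of domination, the one-step Birkhoff identity, exponential localization of the influence of the perturbation on $E^c_g$) is fine, but the proof is incomplete until the sign of the bundle-displacement term is settled, and settling it is where the content lies.
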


\begin{remark}
	When $f$ is $C^r, r\geq 1,$ in  Proposition \ref{babo} above  the perturbation $g$ also can be taken $C^r.$
\end{remark}

\begin{theorem}[\cite{FisherPotrieSambarino2014dynamical}]\label{Teo PFS}
Let $f:{\T}^d\rightarrow{\T}^d$ be a partially hyperbolic diffeomorphism that is isotopic to a linear Anosov automorphism along a path of partially hyperbolic diffeomorphisms, then $f$ is dynamically coherent. Moreover, for each $x\in{\T}^d$, \mbox{$W^c_f(x)=h^{-1}(W^c_A(h(x)))$}, where $h$ is the semiconjugacy between $f$ and $A$.
\end{theorem}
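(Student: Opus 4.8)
This is the dynamical coherence theorem of Fisher--Potrie--Sambarino, so the plan is to recover it along the lines of \cite{FisherPotrieSambarino2014dynamical}, working in the universal cover. First I would fix a lift $\tilde f$ of $f$: since $f$ is isotopic to $A$ along a path of partially hyperbolic diffeomorphisms, $\tilde f$ induces $A$ on $\pi_1(\T^d)$, the difference $\tilde f-A$ is bounded, and the dimensions of $E^s,E^c,E^u$ are constant along the isotopy, with $A$ carrying the invariant splitting $\R^d=E^s_A\oplus E^c_A\oplus E^u_A$ into the matching dimensions. The backbone of the argument is coarse control of the invariant leaves relative to these linear subspaces, of the type recorded in \cite{H}: using estimates in the spirit of Proposition \ref{Prop Hamm 1}, each strong leaf $\widetilde W^s_f(x)$ (resp. $\widetilde W^u_f(x)$) stays in a bounded neighbourhood of the affine plane $x+E^s_A$ (resp. $x+E^u_A$), because its forward (resp. backward) $\tilde f$-iterates contract while $A$ is hyperbolic.

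Next I would produce the center-stable and center-unstable foliations and hence coherence. Following Burago--Ivanov one gets branching foliations tangent to $E^{cs}_f$ and $E^{cu}_f$; then the coarse control---that each (branching) $cs$-surface lies within bounded Hausdorff distance of a linear plane $\widetilde W^{cs}_A(x)$, and that these linear planes are totally ordered and pairwise disjoint---forces the surfaces to be coherently ordered, ruling out branching and giving genuine, uniquely integrable foliations $W^{cs}_f$ and $W^{cu}_f$; intersecting them produces $W^c_f$, so $f$ is dynamically coherent. (In favourable cases one can instead verify uniform transversality of $E^s_f$ and $E^u_f$ to the complementary linear subspaces, invoke Proposition \ref{B1} to get quasi-isometry of $\widetilde W^s_f$ and $\widetilde W^u_f$, and conclude coherence from Theorem \ref{B2}.) Once coherence is available, Proposition \ref{Prop Hamm 3} and Corollary \ref{Cor Hamm 2} give that $\widetilde W^{cs}_f(x),\widetilde W^{cu}_f(x),\widetilde W^c_f(x)$ each stay boundedly close to the corresponding linear plane through $x$.

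For the identity $W^c_f(x)=h^{-1}(W^c_A(h(x)))$ I would use the Franks--Manning semiconjugacy $h\circ f=A\circ h$, with $\tilde h-\mathrm{id}$ bounded. If $y\in W^s_f(x)$, uniform continuity of $h$ together with $h f^n=A^n h$ gives $h(y)\in W^s_A(h(x))$; combined with the bounded-distance control of $cs$-leaves this yields $h(W^{cs}_f(x))\subseteq W^{cs}_A(h(x))$, and symmetrically $h(W^{cu}_f(x))\subseteq W^{cu}_A(h(x))$, so $h(W^c_f(x))\subseteq W^c_A(h(x))$, i.e. $W^c_f(x)\subseteq h^{-1}(W^c_A(h(x)))$. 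For the reverse inclusion I would check, in the universal cover, that $\tilde h^{-1}$ of a linear $cs$-plane $L$ is $\widetilde W^{cs}_f$-saturated (distinct $cs$-planes are disjoint and each $cs$-leaf maps into one of them), lies in a bounded neighbourhood of $L$, and is therefore a single $cs$-leaf by the coarse ordering and absence of branching; likewise for $cu$, and intersecting shows $\tilde h^{-1}$ of a linear center leaf is exactly one center leaf of $\tilde f$. Projecting to $\T^d$ then gives the equality.

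The hard part is the coherence step: establishing that the center-stable and center-unstable (branching) surfaces are coherently ordered and non-branching in dimension larger than three. Unlike the three-dimensional case this is not formal---it is the technical core of \cite{FisherPotrieSambarino2014dynamical} and rests on a careful analysis of how these surfaces, and the strong foliations, sit with respect to the linear invariant subspaces of $A$.
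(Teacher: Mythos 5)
The paper does not prove this statement: it is imported verbatim from \cite{FisherPotrieSambarino2014dynamical} and used as a black box (both here and again in the proof of Lemma~\ref{Lem Prop 3}), so there is no ``paper's own proof'' to compare against. Your sketch is therefore being graded as a reconstruction of the Fisher--Potrie--Sambarino argument.

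As such, the overall shape is right --- work in the universal cover, exploit that $\tilde f - A$ is bounded and that the isotopy through partially hyperbolic maps fixes the dimensions of the bundles, establish Hammerlindl-style coarse control of the strong and center(-stable/-unstable) leaves relative to the corresponding linear planes, and then derive the leaf identity $W^c_f(x)=h^{-1}(W^c_A(h(x)))$ by a two-inclusion argument using the Franks--Manning semiconjugacy and the bounded-distance/saturation structure. One caveat: the Burago--Ivanov branching-foliation machinery you lean on in the middle step is a three-manifold result (one-dimensional center), so it is not available as stated on $\T^d$ with $\dim E^c > 1$, which is exactly the situation this paper needs. The route Fisher--Potrie--Sambarino actually take for general tori is closer to the parenthetical alternative you mention --- verify transversality/quasi-isometry of $\widetilde W^s_f$, $\widetilde W^u_f$ relative to the linear subspaces of $A$ (in the spirit of Proposition~\ref{B1}), invoke Brin-type coherence (Theorem~\ref{B2}), and then use the global product structure of $A$ together with bounded-distance estimates for $W^{cs}_f$, $W^{cu}_f$ to pin down the semiconjugacy's behaviour on center leaves --- rather than via branching foliations. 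You do flag that the coherent-ordering / non-branching step is ``not formal'' and is the technical core of the cited paper, which is an honest and accurate assessment; but since the present paper simply cites the result, the correct disposition here is to keep it as a cited theorem rather than to reprove it.
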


\begin{proof}[Proof of Theorem C]
Consider a linear Anosov diffeomorphism of the ${\T}^4$ induced by the matrix:

$$A_n = \left[\begin{array}{cccc}
0 & 0 & 0 & -1\\
1 & 0 & 0 & 14\\
0 & 1 & 0 & -19\\
0 & 0 & 1 & 8
\end{array}\right].$$

The characteristic polynomial of $A_n$ is  $p(t)  = t^4 - 8t^3 + 19t^2 - 14t + 1$ and its eigenvalues are approximately
$\beta^s=0,08;\,\,\, \beta^c_1=1,2;\,\,\, \beta^c_2=2,3$ \,and \, $\beta^u=4,3$.

We use the Proposition \ref{babo}  to  get a $C^2-$ volume preserving Anosov diffeomorphism  $f=A \circ \phi^{cu},$ where $\phi^{cu}$ is a volume preserving perturbation that preserves $E^s_{A_n}$ such that
$$
\lambda_1^c(f)+\lambda_2^c(f)=\displaystyle \int_{{\T}^4} \log(J^c_f(x))dm > \displaystyle \int_{{\T}^4} \log(J^c_A)dm=\lambda_1^c(A)+\lambda_2^c(A),
$$
By Theorem \ref{Teo E}, the foliation $W^c_f$ is non absolutely continuous, thus the disintegration of the volume along center box foliated is not Lebesgue.
By conjugacy between $f$ and $A,$ the stable index of $f$ is equal to one of $A.$ Using the Pesin's formula, we get
$$
h_m(f)=h_m(f^{-1})=-\lambda^{s}(f)=-\lambda^{s}(A)=h_{Top}(A)=h_{Top}(f),
$$
then the volume $m$ is the maximal measure entropy of  $f.$ By uniqueness of the maximal entropy measure of $f$ it follows that $h_{\ast}m=m,$ it means that $h$ preserves $m.$

By Theorem \ref{Teo PFS}, $W^c_f(x)=h^{-1}(W^c_A(h(x)))$, hence the disintegration of the volume along $W^c_f$ induces, by $h$, a disintegration of the volume along $W^c_A$, hence the disintegration
along $W^c_f$ is not atomic, because the disintegration along $W^c_A$ is Lebesgue.
\end{proof}

\section{Proof of Theorem B}

For to begin the construction, for each $n \geq 1,$  we  consider $A_n: \mathbb{T}^4 \rightarrow \mathbb{T}^4$ the Anosov automorphism the $\mathbb{T}^4$ induced by the matrices

$$A_n = \left[\begin{array}{cccc}
0 & 1 & 0 & 0\\
0 & 0 & 1 & 0\\
0 & 0 & 0 & 1\\
-1 & 3n+2 & -4n-3 & n + 4
\end{array}\right].$$

Denote by $p_n(t)$ a characteristic polynomial of $A_n,$ we have
$$p_n(t)  = t^4 - (n + 4)t^3 + (4n + 3)t^2 - (3n + 2)t + 1.$$

\begin{proposition} For large $n,$ the matrix $A_n$ have four eigenvalues $0 < \beta^s_n < 1 < \beta^c_{1,n} < \beta^c_{2,n} < \beta^u_n,$ such that
\begin{equation*}
	\frac{\beta^u_n}{n} \rightarrow 1, \hspace{0.3cm} \beta^c_{1,n} \rightarrow 1^+, \hspace{0.3cm} \beta^c_{2,n} \rightarrow 3^- \hspace{0.3cm}\mbox{and}\hspace{0.3cm} 3n \cdot \beta^s_n \rightarrow 1.
	\end{equation*}
%\begin{equation}
%\frac{\beta^u(n)}{n} \rightarrow 1, \label{unstable}
%\end{equation}
%\begin{equation}
%\beta^c_1(n) \rightarrow 1^+, \label{center}
%\end{equation}
%\begin{equation}
%\beta^c_2(n) \rightarrow 3^-, \label{center}
%\end{equation}
%\begin{equation}
%3n \cdot \beta^s(n) \rightarrow 1, \label{stable}
%\end{equation}
\end{proposition}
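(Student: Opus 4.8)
The plan is to isolate the $n$-dependence of the characteristic polynomial. Expanding, one checks that
\[
p_n(t)=q(t)-n\,r(t),\qquad r(t)=t(t-1)(t-3)=t^3-4t^2+3t,\qquad q(t)=t^4-4t^3+3t^2-2t+1.
\]
The point is that $r$ carries the whole factor $n$; its zeros $0,1,3$ will be the limits of three of the four eigenvalues, while the fourth escapes to $+\infty$ at rate $n$.

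First I would localize the four roots by the intermediate value theorem. Since $r(1)=r(3)=0$ we have $p_n(0)=q(0)=1>0$, $p_n(1)=q(1)=-1<0$, $p_n(2)=q(2)-n\,r(2)=2n-7$, and $p_n(3)=q(3)=-5<0$. Hence for every $n\ge 4$ the values of $p_n$ at $0,1,2,3$ have signs $+,-,+,-$, so $p_n$ has a root in each of $(0,1),(1,2),(2,3)$; and since $p_n(t)\to+\infty$ as $t\to+\infty$ while $p_n(3)<0$, there is a fourth root in $(3,+\infty)$. A quartic has at most four roots, so these are all of them and each is simple, which already gives
\[
0<\beta^s_n<1<\beta^c_{1,n}<2<\beta^c_{2,n}<3<\beta^u_n.
\]
In particular no eigenvalue lies on the unit circle and $\det A_n=p_n(0)=1$, so $A_n\in \mathrm{SL}(4,\mathbb{Z})$ is Anosov, as asserted above.

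Next I would read off the asymptotics. If $\xi_n$ is either of $\beta^c_{1,n}\in(1,2)$ or $\beta^c_{2,n}\in(2,3)$, then $p_n(\xi_n)=0$ gives $r(\xi_n)=q(\xi_n)/n$, and since $q$ is bounded on $[0,3]$ this forces $r(\xi_n)\to 0$. On $[1,2]$ the unique zero of $r$ is $t=1$ and on $[2,3]$ it is $t=3$, and off a neighbourhood of that zero $|r|$ is bounded below; hence $\beta^c_{1,n}\to 1$ (from above, as $\beta^c_{1,n}>1$) and $\beta^c_{2,n}\to 3$ (from below, as $\beta^c_{2,n}<3$). For the largest eigenvalue, the trace relation $\beta^s_n+\beta^c_{1,n}+\beta^c_{2,n}+\beta^u_n=\operatorname{tr}A_n=n+4$ together with $\beta^s_n\in(0,1)$ and $\beta^c_{1,n}+\beta^c_{2,n}\to 4$ gives that $\beta^u_n-n=4-\beta^s_n-\beta^c_{1,n}-\beta^c_{2,n}$ is bounded, so $\beta^u_n/n\to 1$. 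Finally the product relation $\beta^s_n\,\beta^c_{1,n}\,\beta^c_{2,n}\,\beta^u_n=p_n(0)=1$ yields
\[
3n\,\beta^s_n=\frac{3\,(\beta^u_n/n)^{-1}}{\beta^c_{1,n}\,\beta^c_{2,n}}\longrightarrow\frac{3}{1\cdot 3}=1,
\]
which also gives $\beta^s_n\to 0$ (and then $\beta^u_n-n\to 0$). This proves all four asymptotic statements.

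The computation is essentially routine; the only points requiring care are checking that the sign of $p_n$ at the test points $0,1,2,3$ is the claimed one for all large $n$ (the borderline case is $p_n(2)=2n-7$, which needs $n\ge 4$) and upgrading ``$r(\xi_n)\to 0$'' to convergence of $\xi_n$ to the relevant zero of $r$, which uses that $\xi_n$ stays in a fixed compact interval meeting the zero set of $r$ in a single point. Handling $\beta^u_n$ and $\beta^s_n$ through the trace and product relations rather than directly is convenient and sidesteps the mild nuisance that $r$ vanishes at both endpoints of $[0,1]$.
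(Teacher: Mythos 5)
Your proof is correct, and it takes a cleaner conceptual route than the paper. The key device is the decomposition $p_n(t)=q(t)-n\,r(t)$ with $r(t)=t(t-1)(t-3)$; this makes manifest at a glance why three of the roots converge to the zeros $0,1,3$ of $r$ (for $\xi_n$ in a compact interval, $p_n(\xi_n)=0$ forces $r(\xi_n)=q(\xi_n)/n\to 0$) and why the fourth must escape to infinity. The sign check at $0,1,2,3,\infty$ then simultaneously establishes that all four roots are real and simple and pins down the interlacing intervals. The paper instead localizes each root individually by direct evaluation: $\beta^u_n\in(n,n+1)$ from $p_n(n)=1-2n<0$ and $p_n(n+1)=n^3-n^2-4n-1>0$; $\beta^c_{1,n}\in(1,1+\varepsilon)$ and $\beta^c_{2,n}\in(3-\varepsilon,3)$ from the signs of $p_n$ at $1,1+\varepsilon$ and $3-\varepsilon,3$; and then $3n\beta^s_n\to 1$ from the product relation, exactly as you do. Your use of the trace relation $\beta^s_n+\beta^c_{1,n}+\beta^c_{2,n}+\beta^u_n=n+4$ to handle $\beta^u_n$ is tidier than the paper's direct bracketing in $(n,n+1)$, though it buys a bit less (boundedness of $\beta^u_n-n$ first, with $\beta^u_n-n\to 0$ only recovered at the end). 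Both arguments are elementary intermediate-value-theorem bookkeeping; yours trades a handful of specific evaluations for a small amount of structure, which makes the limiting values $0,1,3$ look inevitable rather than lucky.
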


\begin{proof}
 We have $p_n(n) =  1 - 2n \; \mbox{and} \; p_n(n + 1)  = n^3 - n^2 -4n - 1,$
for $n$ enough large  $p_n$ has a root $\beta^u_n \in (n, n+ 1),$ so $\displaystyle\lim_{n \rightarrow +\infty} \frac{\beta^u_n}{n } = 1.$

Fix $\varepsilon >0$, we have
$p_n(1) = -1$ and  \mbox{$p_n( 1 +  \varepsilon)  =  (2 + \varepsilon - \varepsilon^2 )\varepsilon n + \varepsilon^4 -3\varepsilon^2 - 4\varepsilon - 1,
$}
if $\varepsilon $ is very small, when $n$ is large we have $p_n(1 +  \varepsilon ) > 0,$ so $p_n$ has a root $\beta^c_{1,n} \in (1, 1 + \varepsilon),$ for large values of $n.$

Again, fix $ \varepsilon >0$ a small arbitrary number, we have
$p_n(3) =-5$ and \\ \mbox{ $p_n( 3 - \varepsilon)  =  (6 - 5\varepsilon + \varepsilon^2)\varepsilon n -16\varepsilon + 21 \varepsilon^2 -8\varepsilon^3 + \varepsilon^4 - 5,$}
if $\varepsilon $ is very small, when $n$ is large we have $p_n(3 - \varepsilon ) > 0,$ so $p_n$ has a root $\beta^c_{2,n} \in (3 - \varepsilon, 3),$ for large values of $n.$

Since $p_n$ has four real roots,
$
\beta^u_n \cdot \beta^c_{1,n} \cdot \beta^c_{2,n}  \cdot \beta^s_n = 1,
$
 and
$$
\displaystyle\lim_{n \rightarrow +\infty} \left[\frac{n}{\beta^u_n} \cdot \frac{1}{\beta^c_{1,n}}\cdot \frac{3}{\beta^c_{2,n}}\right] = 1,
$$
then as $\beta^s_n = \frac{1}{\beta^u_n\cdot \beta^c_{1,n}\cdot \beta^c_{2,n}},$ finally we have $\displaystyle\lim_{n \rightarrow +\infty} 3n \beta^s_n = 1.$
\end{proof}

We go to denote $\beta^c_{1,n} = 1 + \alpha_n,$ such that $\alpha_n \rightarrow 0^+. $ %The center exponent $\lambda^c_n : = \lambda^c_{A_n} = \log( 1 + \alpha_n) \rightarrow 0^+.$

\begin{proposition} Let $v^u_n, v^c_{2, n}, v^c_{1, n}, v^s_{n} $ be corresponding unit eigenvectors of $\beta^u_n$, $\beta^c_{2,n}$, $\beta^c_{1,n}$ and $\beta^s_n$ respectively. There are unit vectors $ v^u, v^c_{2}, v^c_{1}, v^s,$ such that $$v^u_n \rightarrow v^u, v^c_{2, n}\rightarrow v^c_{2}, v^c_{1, n}\rightarrow v^c_{1}, v^s_{n}\rightarrow v^s ,$$
moreover $\mathcal{B}  = \{v^u, v^c_{2}, v^c_{1}, v^s\}$ is a basis of $\mathbb{R}^4.$
\end{proposition}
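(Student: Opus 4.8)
The strategy is to exploit that each $A_n$ is a \emph{companion matrix}, which makes every eigenvector explicit, and then to pass to the limit using the asymptotics of the eigenvalues obtained in the previous proposition.

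First, since $A_n$ is the companion matrix of $p_n$, the eigenvalue equation $A_n v = \beta v$ forces $v_2 = \beta v_1$, $v_3 = \beta^2 v_1$, $v_4 = \beta^3 v_1$, and the remaining (fourth) scalar equation is automatically satisfied because $p_n(\beta)=0$. Hence, for every eigenvalue $\beta$ of $A_n$, the line of eigenvectors is spanned by $(1,\beta,\beta^2,\beta^3)$. For $n$ large the four eigenvalues $\beta^s_n<\beta^c_{1,n}<\beta^c_{2,n}<\beta^u_n$ are real and pairwise distinct, as established above, so each eigenspace is a real line and the unit eigenvectors equal, up to sign,
\[
w(\beta)\;=\;\frac{(1,\beta,\beta^2,\beta^3)}{\sqrt{1+\beta^2+\beta^4+\beta^6}}.
\]
I fix the signs so that $v^s_n=w(\beta^s_n)$, $v^c_{1,n}=w(\beta^c_{1,n})$, $v^c_{2,n}=w(\beta^c_{2,n})$ (these have positive first coordinate), and for the unbounded eigenvalue I rewrite $w(\beta^u_n)$ by dividing numerator and denominator by $(\beta^u_n)^3$.

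Next I pass to the limit. The map $\beta\mapsto w(\beta)$ is continuous on $\mathbb{R}$, its denominator being $\ge 1$, so from $\beta^s_n\to 0$, $\beta^c_{1,n}\to 1$ and $\beta^c_{2,n}\to 3$ one gets
\[
v^s_n\to v^s=(1,0,0,0),\qquad v^c_{1,n}\to v^c_1=\tfrac12(1,1,1,1),\qquad v^c_{2,n}\to v^c_2=\tfrac{1}{\sqrt{820}}(1,3,9,27);
\]
and since $\beta^u_n\to\infty$, dividing through by $(\beta^u_n)^3$ gives $v^u_n\to v^u=(0,0,0,1)$. These are the claimed limit vectors.

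Finally, to see that $\mathcal{B}=\{v^u,v^c_2,v^c_1,v^s\}$ is a basis of $\mathbb{R}^4$ it suffices to show that the four limit directions $(0,0,0,1)$, $(1,3,9,27)$, $(1,1,1,1)$, $(1,0,0,0)$ are linearly independent: three of them are genuine Vandermonde vectors $(1,\beta,\beta^2,\beta^3)$ for the distinct nodes $\beta=0,1,3$, and the fourth is the ``$\beta=\infty$'' direction. A one-line cofactor expansion along the row $(0,0,0,1)$ reduces the $4\times4$ determinant, up to sign, to the $3\times3$ Vandermonde determinant in the distinct nodes $0,1,3$, which equals $\pm 6\neq 0$; hence the four vectors are independent and $\mathcal{B}$ is a basis. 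There is no genuinely hard step here: the content is the recognition of the companion-matrix structure, plus the mild bookkeeping of signs and normalisation — harmless, since unit eigenvectors are defined only up to sign — and the small determinant check.
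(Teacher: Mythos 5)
Your proof is correct and follows essentially the same route as the paper: both recognize $A_n$ as a companion matrix, read off the Vandermonde eigenvectors $(1,\beta,\beta^2,\beta^3)$, pass to the limit using $\beta^s_n\to 0$, $\beta^c_{1,n}\to 1$, $\beta^c_{2,n}\to 3$ and the rescaled $\beta^u_n\to\infty$ direction, and observe linear independence of the limit vectors. You are slightly more careful than the paper in two respects (explicitly normalizing to unit vectors as the proposition's statement requires, and replacing the paper's bare assertion ``which are LI vectors'' with a cofactor-reduction to a $3\times 3$ Vandermonde determinant), but these are refinements of the same argument rather than a different method.
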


\begin{proof}
The vectores\\
$w^s_{n}=\left(
           \begin{array}{c}
             1 \\
             \beta^s_{n} \\
             (\beta^s_{n})^2 \\
             (\beta^s_{n})^3 \\
           \end{array}
         \right)
$,
$w^c_{1,n}=\left(
           \begin{array}{c}
             1 \\
             \beta^c_{1,n} \\
             (\beta^c_{1,n})^2 \\
             (\beta^c_{1,n})^3 \\
           \end{array}
         \right)$,
$w^c_{2,n}=\left(
           \begin{array}{c}
             1 \\
             \beta^c_{2,n} \\
             (\beta^c_{2,n})^2 \\
             (\beta^c_{2,n})^3 \\
           \end{array}
         \right)
$ and
$w^u_{n}=\left(
           \begin{array}{c}
             1 \\
             \beta^u_{n} \\
             (\beta^u_{n})^2 \\
             (\beta^u_{n})^3 \\
           \end{array}
         \right)
$
 are eigenvectors of $A_n$ associated with the respective eigenvalues $\beta^s_{n}$, $\beta^c_{1,n}$, $\beta^c_{2,n}$ and $\beta^u_{n}$. When $n\rightarrow\infty$, we have
\begin{align*}
w^s_{n}=\left(
           \begin{array}{c}
             1 \\
             \beta^s_{n} \\
             (\beta^s_{n})^2 \\
             (\beta^s_{n})^3 \\
           \end{array}
         \right)
\,\,\rightarrow \,\,
\left(
  \begin{array}{c}
    1 \\
    0 \\
    0 \\
    0 \\
  \end{array}
\right)
, \,\,\,\,\,\,\,\,\,
w^c_{1,n}&=\left(
           \begin{array}{c}
             1 \\
             \beta^c_{1,n} \\
             (\beta^c_{1,n})^2 \\
             (\beta^c_{1,n})^3 \\
           \end{array}
         \right)
\,\,\rightarrow \,\,
\left(
  \begin{array}{c}
    1 \\
    1 \\
    1 \\
    1 \\
  \end{array}
\right)\\
w^c_{2,n}=\left(
           \begin{array}{c}
             1 \\
             \beta^c_{2,n} \\
             (\beta^c_{2,n})^2 \\
             (\beta^c_{2,n})^3 \\
           \end{array}
         \right)
\,\,\rightarrow \,\,
\left(
  \begin{array}{c}
    1 \\
    3 \\
    9 \\
    27 \\
  \end{array}
\right)
\,\,\,\,\, \mbox{and} \,\,\,\,\,
\frac{1}{n^3}w^u_{n}&=\left(
           \begin{array}{c}
             1/n^3 \\
             \beta^u_{n}/n^3 \\
             (\beta^u_{n})^2/n^3 \\
             (\beta^u_{n})^3/n^3 \\
           \end{array}
         \right)
\,\,\rightarrow \,\,
\left(
  \begin{array}{c}
    0 \\
    0 \\
    0 \\
    1 \\
  \end{array}
\right)
\end{align*}
which are LI vectors.
\end{proof}

Denote by $H^u_n = \langle v^u_n \rangle,$$ $$H^c_{1,n} = \langle v^c_{1,n} \rangle,$ $H^c_{2,n} = \langle v^c_{2,n} \rangle$ and $H^s_n = \langle v^s_{n} \rangle.$

We can see $A_n: \mathbb{T}^4 \rightarrow \mathbb{T}^4$ with two partially hyperbolic decomposition.
\begin{equation}\label{first}
TM =  F^u_n \oplus F^c_n \oplus F^s_n
\end{equation}
where $F^u_n = H^u_n \oplus H^c_{2,n} ,$  $F^c_n = H^c_{1,n}$ and $F^s_n= H^s_n$ and
\begin{equation}\label{second}
TM =  E^u_n \oplus E^c_n \oplus E^s_n
\end{equation}
where $E^u_n = H^u_n  ,$  $E^c_n = H^c_{1,n} \oplus H^c_{2,n}$ and $E^s_n= H^s_n.$

\begin{remark}\label{simple}
From now on, for simplicity we can consider $v^u = (1,0,0,0)$, \mbox{$v^c_2 = (0,1,0,0)$}, $v^c_1 = (0,0,1,0) $ and $v^s = (0,0,0,1).$
\end{remark}

\subsection{Perturbing $A_n$} \label{perturb}

%\begin{proposition}\label{franks}[Conservative Version of  Franks Lemma, Proposition 7.4 of \cite{BDP}] Consider a conservative diffeomorphism $f$ and a finite $f-$ invariant set
%$E.$ Assume that $B$ is a conservative $\varepsilon-$ perturbation of $Df$ along $E.$ Then for every
%neighbourhood $V$ of $E$ there is a conservative diffeomorphism $g$ arbitrarily $C^1-$ close to
%$f$  coinciding with $f$ on $E$ and out of $V$, and such that $Dg$ is equal to $B$ on $E.$
%\end{proposition}

By  Proposition \ref{babo}, consider $g_n$ a small perturbation of $A_n,$ an Anosov diffeomorphism such that $A_n$ and $g_n$ are close in the $C^1$ topology and

\begin{equation}\label{ineqcenter}
\displaystyle\int_{\mathbb{T}^4} \log (\jac g_n| E^c_{g_n}) dm > \displaystyle\int_{\mathbb{T}^4} \log (\jac A_n| E^c_{A_n}) dm.
\end{equation}

Also we go to require that

\begin{equation}
||Dg_n(\cdot)| F^c_{g_n}|| < 1 + 2\alpha _n,   ||Dg_n(\cdot)| E^c_{g_n}|| < 3 + 3\alpha _n  \;\; \mbox{and} \;\; H_n := A_n^{-1}\circ g_n \rightarrow_{C^1} Id.\label{convergeId1}
\end{equation}

It is possible by Proposition \ref{babo}, since $g_n = A_n \circ H_n$ and $g_n$ can be found  $C^1-$arbitrarely close to $A_n.$

By continuity of the partially hyperbolic decomposition, if $g_n$ is $C^1$ close to $A_n,$ then there is an invariant partially hyperbolic splitting  $T \mathbb{T}^4 = E^u_{g_n} \oplus E^c_{g_n} \oplus E^s_{g_n},$ where $\dim (E^{\sigma}_{g_n}) = \dim (E^{\sigma}_{n}), \sigma \in \{s,c,u\}$ and  $E^{\sigma}_{g_n}$ and  $E^{\sigma}_{n},  \sigma \in \{s, c, u\}$ are close.
Again, by continuity of partially hyperbolic splitting, each $g_n$ admits a splitting $T\mathbb{T}^4 = F^u_{g_n} \oplus F^c_{g_n} \oplus F^s_{g_n},$  close to $F^u_{n} \oplus F^c_{n} \oplus F^s_{n}.$

Now we go to modify the stable index of a fixed point of $g_n$. For this we use the arguments of a well-known lemma of
\cite[Lema 1.1]{Franks1971necessary}, which allows us to make a change in the differential of $g_n$
in a finite number of points. In \cite[Proposition 7.4]{BDP} shows that if the initial diffeomorphism is conservative then we can take the perturbation also conservative. There is a more general version of this result in
\cite[Lema 2.4]{LLS}. For our proposes we only need the following lemma.

\begin{lemma}[\cite{BDP}]\label{LemBDP}
For any $N\in{\N}$ e $\rho>0$, there is a $\delta>0$ and a neighborhood of the identity
$B(\delta, Id)\subset SL(N,{\R})$ such that for any $A\in B(\delta, Id)$ there is a
$h\in \diff^r_m({\R}^N)$ satisfying the properties:
\begin{itemize}
  \item $h=Id$ out of a unitary ball centered on $0$,
  \item $h(0)=0$ and $Dh(0)=A$,
  \item $||Dh-Id||<\rho$.
\end{itemize}
\end{lemma}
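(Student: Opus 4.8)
The plan is to realize the prescribed linear germ at the origin by flowing along a suitably chosen divergence-free vector field with small compact support. Since $A$ lies close to the identity in the connected Lie group $SL(N,\R)$, fix once and for all finitely many matrices $X_1,\dots,X_p$ spanning the space of trace-zero real $N\times N$ matrices (the Lie algebra of $SL(N,\R)$). Then the smooth map $\Phi(t_1,\dots,t_p)=\exp(t_1X_1)\cdots\exp(t_pX_p)$ is a submersion at the origin with $\Phi(0)=\mathrm{Id}$ (its differential at $0$ sends $(s_1,\dots,s_p)$ to $\sum_k s_kX_k$, which is onto by the spanning hypothesis), so by the inverse function theorem there are a number $\delta_0>0$ and a smooth section $B\mapsto(t_1(B),\dots,t_p(B))$ defined on $B(\delta_0,\mathrm{Id})\subset SL(N,\R)$ with $\Phi(t_1(B),\dots,t_p(B))=B$, $t_k(\mathrm{Id})=0$, and $\sum_{k}|t_k(B)|\le C_0\|B-\mathrm{Id}\|$.

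For a single generator $X=X_k$, I construct a compactly supported, volume-preserving diffeomorphism whose germ at $0$ is $\exp(tX)$. The linear field $x\mapsto Xx$ is divergence-free since $\operatorname{tr}X=0$. Choose $\chi\in C^\infty_c(\R^N)$ with $\chi\equiv1$ on $B_{1/2}$ and $\operatorname{supp}\chi\subset B_1$; the field $\chi(x)Xx$ is then compactly supported, equals $Xx$ on $B_{1/2}$, but acquires a divergence $g:=\operatorname{div}(\chi Xx)$ supported in the annulus $B_1\setminus B_{1/2}$. Because $\int_{\R^N}g\,dx=0$ (divergence theorem applied to the compactly supported field $\chi Xx$) and $g$ vanishes near $0$, a standard solvability statement gives $W\in C^\infty_c(B_1)$, vanishing on a neighborhood of $0$, with $\operatorname{div}W=g$. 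Hence $Y_k:=\chi Xx-W$ is a divergence-free field in $C^\infty_c(B_1)$ agreeing with $Xx$ on a neighborhood of $0$. Its time-$t$ flow $h^t_k$ is volume-preserving, equals the identity off $B_1$, satisfies $h^t_k(x)=\exp(tX_k)x$ for $x$ near $0$ (for $|t|$ small, since such trajectories stay in $B_{1/2}$), and obeys $\|Dh^t_k-\mathrm{Id}\|\le K|t|$ with $K$ depending only on $N$ and the fixed data $\chi,W,X_k$.

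Given $\rho>0$, shrink $\delta_0$ to $\delta>0$ so that $pKC_0\delta<\rho/2$. For $A\in B(\delta,\mathrm{Id})$ put $h:=h^{t_1(A)}_1\circ\cdots\circ h^{t_p(A)}_p$. Each factor lies in $\diff^r_m(\R^N)$ (in fact is $C^\infty$) and is the identity outside $B_1$, hence so is $h$; near $0$ each factor coincides with the linear map $\exp(t_k(A)X_k)$, so $h(0)=0$ and $Dh(0)=\exp(t_1(A)X_1)\cdots\exp(t_p(A)X_p)=A$; and telescoping the bound on each $\|Dh^{t_k(A)}_k-\mathrm{Id}\|$ together with the product rule gives $\|Dh-\mathrm{Id}\|<\rho$ after possibly shrinking $\delta$ a little more. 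This provides the required $\delta$ and neighborhood $B(\delta,\mathrm{Id})$. The main obstacle is precisely the volume-preservation of the local model: one cannot simply multiply the linear map by a radial bump, because $x\mapsto x+t\chi(|x|)(Ax-x)$ has Jacobian determinant $\ne1$ on the transition annulus; the device above circumvents this by correcting the cut-off field inside the annulus using the vanishing-flux identity $\int\operatorname{div}(\chi Xx)\,dx=0$, everything else being the inverse function theorem for the parametrization of $SL(N,\R)$ and routine estimates for compositions of $C^1$-small maps.
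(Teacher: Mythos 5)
The paper only cites this lemma from \cite{BDP} and provides no proof of its own, so there is no in-paper argument to compare against; taken on its own terms, your proof is correct. The two essential points are handled cleanly: (i) the local parametrization $A=\exp(t_1X_1)\cdots\exp(t_pX_p)$ of $SL(N,\R)$ near $\mathrm{Id}$, with $\{X_k\}$ spanning $\mathfrak{sl}(N,\R)$ and $\sum_k|t_k(A)|\le C_0\|A-\mathrm{Id}\|$, which is exactly the rank/inverse function theorem; and (ii) the realization of each one-parameter factor as the time-$t$ flow of a compactly supported divergence-free field agreeing with $x\mapsto X_kx$ near $0$. You correctly identify that merely cutting off with a bump $\chi$ destroys the divergence-free property, introducing the source term $g=\nabla\chi\cdot X_kx$ supported in the transition annulus, and that it can be corrected by $W$ with $\operatorname{div}W=g$ compactly supported in that annulus because $g$ has zero mean (divergence theorem applied to $\chi X_kx$) and the annulus is connected for $N\ge 2$ (for $N=1$ the statement is vacuous since $SL(1,\R)=\{1\}$). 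That solvability fact for the divergence equation with mean-zero $C^\infty_c$ data on a connected bounded domain is standard (Bogovskii-type operator; Galdi's divergence lemma) and propagates $C^\infty$ regularity, so each $h^t_k$ is a $C^\infty$ volume-preserving diffeomorphism, identity off $B_1$ and linear near $0$ for small $|t|$; a written version should cite it explicitly, but it is not a gap. The flow estimate $\|Dh^t_k-\mathrm{Id}\|\le K|t|$ and the telescoping estimate for the $p$-fold composition are routine. For comparison with the usual proofs in the literature: many treatments instead factor $A$ near $\mathrm{Id}$ into elementary shears near $\mathrm{Id}$ and realize each shear by a cut-off Hamiltonian supported in a coordinate $2$-plane, which avoids any divergence correction but requires a more bookkeeping-heavy matrix factorization; your Lie-algebra/flow route handles all of $\mathfrak{sl}(N,\R)$ at once at the modest cost of the divergence-solvability lemma, and both yield the same conclusion.
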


Let $p_n$ be a fixed point for $g_n.$ For each fixed $n,$ consider a system $\{V_{n,j}\}_{j = 0}^{+\infty}$ of  small open balls centered in $p_n.$ The neighborhoods $V_{n,j} $ are constructed after.

Fixed $n > 0,$ enough large, consider matrixes $D_n$ satisfying:

\begin{enumerate}
\item $D_n $ and $Dg_n(p_n)$ have the same eigenspaces and the corresponding eigenvalues have the same sign.
\item $D_n| F^u_{g_{n}}(p_n) =  Dg_{n}(p_n)| F^u_{g_{n}}(p_n),$
\item $|| D_n| F^c_{g_{n}}(p_n)|| = 1 - \alpha_n, \;\;$
\item $D_n| F^s_{g_{n}}(p_n)$ is taken coherently with $\det(D_n) = \det(Dg_n(p_n)).$
\end{enumerate}

Since $g_n$ can be taken $C^1-$arbitrarily close to $A_n,$ we can to choose $g_n$ such that $\angle(F^{\sigma}_{g_n}(\cdot), F^{\sigma}_{n}(\cdot) )\rightarrow 0 $  uniformly in $\mathbb{T}^4,$ for each $ \sigma \in\{s, c, u\}.$ For each $n > 0$ and $x \in \mathbb{T}^4$ it is possible to find a basis $\{w^u_n(x), w^c_{2,n}(x), w^c_{1,n}(x) w^s_n(x)\}$ composed by unit vectors of $T_x\mathbb{T}^4$ uniformly close to $\{v^u_n, v^c_{2,n}, v^c_{1,n} v^s_n\}. $  By uniform convergence, the  change matrix $C_n(x),$ from the basis $\mathcal{B}_n' = \{w^u_n(x), w^c_{2,n}(x), w^c_{1,n}(x), w^s_n(x)\}$ to the canonical basis,  is uniform close to the identity matrix (see Remark \ref{simple}).

The matrixes $D_n$ will be  diagonal matrixes, in the basis $\mathcal{B}_n'.$ Denote by $\theta^u_n,$ $\theta^c_{1,n},$ $\theta^c_{2,n}$ and  $\theta^s_n = \frac{1}{\theta^u_n\cdot \theta^c_{1,n} \cdot \theta^c_{2,n}},$ the four eigenvalues of $Dg_n(p_n)$ respectively in the directions of the basis the basis $\mathcal{B}_n'.$  Then, using the items above, in the basis $\mathcal{B}_n'$ we have:
\begin{align*}
	[Dg_{n}(p_n)]^{-1} D_n &=\left[\begin{array}{cccc}
		\theta^u_n & 0 & 0& 0\\
		0 & \theta^c_{2,n}& 0 & 0\\
		0 & 0 &  \theta^c_{1,n}& 0\\
		0 & 0 & 0 & \frac{1}{\theta^u_n\theta^c_{1,n}\theta^c_{2_n}}
	\end{array}\right]^{-1}  \left[\begin{array}{cccc}
		\theta^u_n & 0 & 0& 0\\
		0 & \theta^c_{2,n} & 0 & 0\\
		0 & 0 &  1 - \alpha_n& 0\\
		0 & 0 & 0 & \frac{1}{\theta^u_n\theta^c_{2,n} (1 - \alpha_n)}
	\end{array}\right]\\
	&=\left[\begin{array}{cccc}
		1 & 0 & 0 & 0\\
		0 & 1 & 0 & 0\\
		0 & 0 & \frac{1-\alpha_n}{\theta^c_{1,n}} & 0\\
		0& 0 & 0 & \frac{\theta^c_{1,n}}{1 - \alpha_n}
	\end{array}\right].
\end{align*}
 From $\ref{convergeId1},$ we have $1<\theta^c_{1,n}<1+2\alpha_n$  it leads that
 \mbox{$\max\left\{|1 - \frac{\theta^c_{1,n}}{1-\alpha_n}|, |1 - \frac{1-\alpha_n}{\theta^c_{1,n}}|\right\} < 3,5\alpha_n,$} for $n$ large. Since $C_n(x)$ is uniform close to the identity, then in the canonical basis  we have

 \begin{equation}
 ||[Dg_n(p_n)]^{-1} \cdot D_n - Id || < 4\alpha_n, \;\mbox{for large values of $n.$} \label{important}
 \end{equation}

 Fix $\delta > 0,$ such that every $h\in \diff^1_m(\mathbb{T}^4)$ that is $\delta-C^1-$close to $Id$ is homotopical to $Id.$

Since by $\ref{important}$ the norm  $||[Dg_n(p_n)]^{-1} \cdot D_n - Id || \rightarrow 0$  when $n \rightarrow + \infty. $

Now as the Lemma \ref{LemBDP}, fix $\sigma>0$, which depends only on $\delta$, such that for all $h\in \diff^r_m({\R}^4)$
with $||h-Id||_{C^1}\leq \sigma$ is homotopic to identity. For Equation \ref{important} take $n$ so that $4\cdot \alpha_n$ is enough small
such that by Lemma \ref{LemBDP} there is a  $h_n\in \diff^r_m({\R}^4)$  satisfying:
\begin{itemize}
	\item $||h_{n} - Id||_{C^1} < \sigma,$
	\item $h_{n} =Id$ in $p_n$ and out of a unitary ball centered on $p_n$.
	\item $ Dh_{n} (p_n) = [Dg_n(p_n)]^{-1} \cdot D_n.$
\end{itemize}

For each $j$ suppose that $V_{n,j}=B(p_n,\varepsilon_j)$ ($\varepsilon_j$ It will be defined later).
We define $h_{n,j}\in \diff^r_m({\R}^4)$ by
$$
h_{n,j}(x)=\varepsilon_j h_n(\frac{x}{\varepsilon_j}),\,\, \mbox{for\,\,all}\,\,x\in{\R^4}, \,\,\mbox{then}
$$
\begin{itemize}
    \item $||h_{n,j} - Id||_{C^1} < \sigma,$
	\item $h_{n,j}(p_n) = p_n, \; h_{n,j} = Id \; \mbox{out of}\,\, V_{n,j},$
	\item $Dh_{n,j} (p_n)=Dh_{n} (p_n)= [Dg_n(p_n)]^{-1} \cdot D_n.$
\end{itemize}
Let $g_{n,j}=g_n\circ h_{n,j}$. Then
$$
Dg_{n,j}=Dg_{n}(p_n)\circ Dh_{n,j}(p_n)= Dg_{p_n}\circ [Dg_n(p_n)]^{-1} \circ D_n=D_n
$$

\begin{figure}[!htb]
\centering
\includegraphics[scale=0.95]{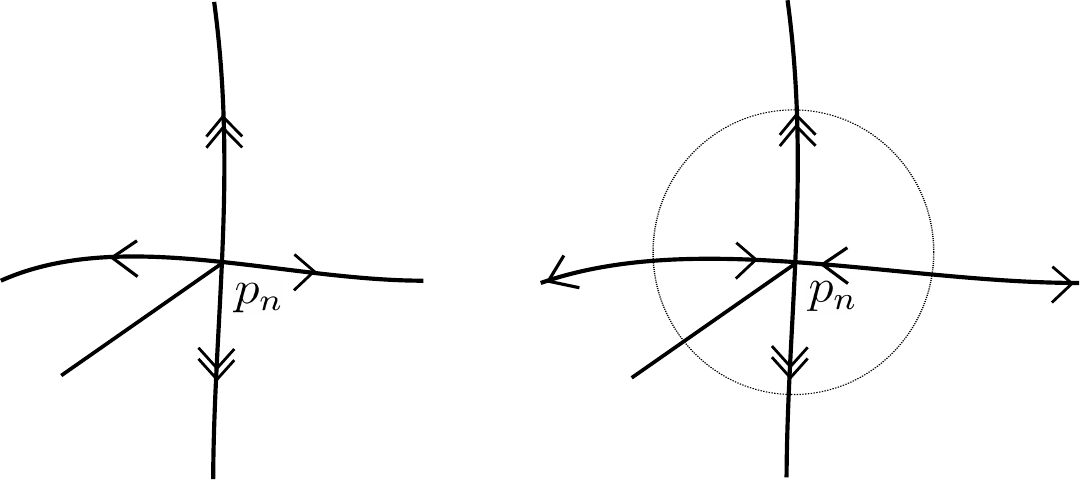}
\caption{The fixed point $p_n$ of $g_n$ and $g_{n,j}$, respectively}\label{PerturbCeter1}
\end{figure}

\begin{remark}
We observe that if $g_{n}$ is $C^r, r \geq 1,$ then each $g_{n,j}$ can be taken $C^r.$  That is because the perturbation of $g_n$
involves exponential function and some bump functions which are $C^{\infty}.$
\end{remark}

\subsection{The choice of the open system $\{V_{n,j}\}_{j = 0}^{+\infty}$}\label{neigh}
Fix $n >0$ and $\varepsilon> 0 $ a small number. Let $V_{n,0} = B(p_n, \varepsilon)$ be the open ball centered in $p_n$ with radius $\varepsilon.$ Since $p_n$ is a fixed point for the diffeomorphism $g_n,$ then the distance between $p_n$ and $g_n^j(\mathbb{T}^4 \setminus V_{n,0})$ is positive for any $j$ a integer number.  Define recursively $\varepsilon_j = (20n)^{-j} \varepsilon_{j -1}, j = 1, 2, \ldots$ and $V_{n,j} = B(p_n, \varepsilon_j), j = 1, 2, \ldots.$ Since the Lipschitz constants of $g_{n,j}$ and its inverse is bounded by $20n,$ we have $g_{n,j}^k(x) \notin V_{n,j}$ for every $x \notin V_{n , j-1},$ $|k| \leq j$ and $j =1,2,\ldots.$

\subsection{Properties of the diffeomorphisms  $g_{n,j}$}

\begin{lemma}\label{Lem Prop 1}
The diffeomorphisms $g_{n,j}$ are partially hyperbolic diffeomorphisms homotopic to $A_n$.
\end{lemma}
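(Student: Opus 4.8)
The plan is to prove the two assertions separately: that $g_{n,j}$ is homotopic to $A_n$, and that it is (absolutely) partially hyperbolic with a dominated splitting of the same type $1\oplus 2\oplus 1$ as $A_n$.

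\textbf{Homotopy.} Write $g_{n,j}=g_n\circ h_{n,j}$. By construction $g_n=A_n\circ H_n$ with $H_n\rightarrow_{C^1} Id$, so $g_n$ induces on $\pi_1(\mathbb{T}^4)$ the same automorphism as $A_n$ and is homotopic to it; and $\|h_{n,j}-Id\|_{C^1}<\sigma$, where $\sigma$ was chosen precisely so that every such map is homotopic to the identity. Since homotopy classes are multiplicative under composition, $g_{n,j}=g_n\circ h_{n,j}$ is homotopic to $A_n\circ Id=A_n$; in particular the linearization of $g_{n,j}$ is $A_n$.

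\textbf{Partial hyperbolicity.} The naive route — ``partial hyperbolicity is $C^1$-open and $g_{n,j}$ is $C^1$-close to $g_n$'' — is not available: $g_{n,j}=g_n\circ h_{n,j}$ differs from $g_n$ in the $C^1$ metric by about $\|Dg_n\|\cdot\sigma$, which is of order $n\sigma$ and not small. I would argue instead with cone fields. Outside $V_{n,j}$ one has $g_{n,j}=g_n$, which is partially hyperbolic, so only the behaviour on the small ball $V_{n,0}\supset V_{n,j}$ is at issue. Observe first that for $n$ large the eigenvalues of $A_n$ satisfy $\beta^u_n/\beta^c_{2,n}\sim n/3$ and $\beta^c_{1,n}/\beta^s_n\sim 3n$, so the two domination ratios of the splitting $E^s_n\oplus E^c_n\oplus E^u_n$ tend to $\infty$, while the eigendirections converge to the coordinate directions $e_1,\dots,e_4$ (Remark \ref{simple}), hence are uniformly separated for large $n$. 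Therefore $A_n$ admits constant cone fields $\mathcal C^u$ (around $e_1$), $\mathcal C^s$ (around $e_4$), $\mathcal C^{cu}$ (around $\langle e_1,e_2,e_3\rangle$) and $\mathcal C^{cs}$ (around $\langle e_2,e_3,e_4\rangle$) with openings bounded below by a constant independent of $n$ and strictly invariant with a margin uniform in $n$. As $g_n$ is $C^1$-close to $A_n$, these cone fields are strictly invariant for $Dg_n$ too, and the bounds on $\|Dg_n|E^c_{g_n}\|$ and $\|Dg_n|E^s_{g_n}\|$ in (\ref{convergeId1}) give strong expansion in $\mathcal C^u$, strong contraction in $\mathcal C^s$, and intermediate behaviour in $\mathcal C^c:=\mathcal C^{cu}\cap\mathcal C^{cs}$.

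Since $\|Dh_{n,j}(x)-Id\|<\sigma$ for every $x$, the linear map $Dh_{n,j}(x)$ sends each cone at $x$ into a cone of only slightly larger opening; writing $g_{n,j}(x)=g_n(h_{n,j}(x))$ and $Dg_{n,j}(x)=Dg_n(h_{n,j}(x))\circ Dh_{n,j}(x)$, and using that the cone fields vary little over the small ball $V_{n,0}$, one checks that $Dg_{n,j}(x)$ still maps $\mathcal C^u$ strictly inside $\mathcal C^u$ and $\mathcal C^{cu}$ inside $\mathcal C^{cu}$ at the image point (and symmetrically for $\mathcal C^s,\mathcal C^{cs}$ under the inverse), provided $\sigma$ is small compared with the uniform cone margins — legitimate because those margins do not shrink with $n$. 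The expansion and contraction estimates survive up to a factor $1\pm c\sigma$, and at the single delicate point $p_n$ one reads off directly from $Dg_{n,j}(p_n)=D_n$ that the eigenvalue magnitudes $\theta^u_n\sim n$, $\theta^c_{2,n}\sim 3$, $1-\alpha_n$ and $\theta^s_n\theta^c_{1,n}/(1-\alpha_n)\sim 1/(3n)$ satisfy the absolute partially hyperbolic inequalities for $\langle w^u_n\rangle\oplus\langle w^c_{1,n},w^c_{2,n}\rangle\oplus\langle w^s_n\rangle$. Hence $g_{n,j}$ carries a $Dg_{n,j}$-invariant dominated splitting $E^u\oplus E^c\oplus E^s$ of type $1\oplus 2\oplus 1$ with uniform rates, i.e. $g_{n,j}$ is (absolutely) partially hyperbolic.

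\textbf{Main obstacle.} The one genuine difficulty is the failure of $C^1$-openness noted above: $h_{n,j}$ is $C^1$-small only as a perturbation of the identity, hence $g_{n,j}$ is not a $C^1$-small perturbation of $g_n$ since $\|Dg_n\|\sim n$. The whole argument therefore rests on producing cone fields for $A_n$ with opening bounded away from $0$ uniformly in $n$ — which is exactly what the growth of the domination ratios $\beta^u_n/\beta^c_{2,n}$ and $\beta^c_{1,n}/\beta^s_n$, together with the uniform separation of the limiting eigendirections, provides — and on the fact that $\sigma$ in the construction was fixed independently of $n$, so that post-composition with a map $\sigma$-$C^1$-close to the identity preserves those cones.
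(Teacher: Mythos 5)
Your argument is correct and is, in substance, the one the paper uses; you have simply re-derived what the paper delegates to its appendix. The paper's proof is two lines: it regroups $g_{n,j}=A_n\circ(H_n\circ h_{n,j})$, notes that $H_n\circ h_{n,j}$ is arbitrarily $C^1$-close to the identity for $n$ large (which gives the homotopy statement immediately), and invokes Lemma~\ref{lema Adp Ponce-Tahzibi}. That lemma is exactly the cone-field argument you reconstruct: for a linear partially hyperbolic $f$ there exists $\varepsilon>0$, depending only on the domination ratio $\Theta=\min\{\lambda^c_1/\lambda^s_i,\,\lambda^u_1/\lambda^c_j\}$, such that $f\circ g$ is absolutely partially hyperbolic whenever $\|g-Id\|_{C^1}<\varepsilon$. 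Since $\Theta_n\to\infty$ for the family $A_n$, this $\varepsilon$ does not degenerate with $n$, which is precisely your ``uniform cone margin'' observation, and it also neutralizes the $\|Dg_n\|\sim n$ obstruction you correctly flag. The only cosmetic difference is the grouping: writing $g_{n,j}=g_n\circ h_{n,j}$ makes you check cone invariance in two stages (first for $g_n=A_n\circ H_n$, then for the further post-composition with $h_{n,j}$), whereas the paper's grouping applies the lemma once to a single near-identity factor. Both routes rest on the same key point — the admissible perturbation size is governed by the eigenvalue gaps of $A_n$, not by $\|DA_n\|$ — so the conclusion and the mechanism coincide.
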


\begin{proof}
It is because $g_{n,j} = g_n \circ h_{n,j}=A_n\circ H_n\circ h_{n,j}$ and  $H_n\circ h_{n,j}$
is arbitrarily $C^1-$close of the identity, for $n$ enough large. The partial hyperbolicity follows from Lemma \ref{lema Adp Ponce-Tahzibi}.
\end{proof}

\begin{lemma}\label{convergence} For each large $n$ the diffeomorphisms $g_{n,j}$  are (absolute) partially hyperbolic
and there are two decompositions $F_{g_{n,j}}^u \oplus F_{g_{n,j}}^c \oplus F_{g_{n,j}}^s$ and
$E_{g_{n,j}}^u \oplus E_{g_{n,j}}^c \oplus E_{g_{n,j}}^s$ such that
\begin{enumerate}
\item $E^{\sigma}_{g_{n,j}}(x) \rightarrow  E^{\sigma}_{g_{n}}(x), \sigma \in \{s,c,u\},  \; \mbox{where} \; j \rightarrow + \infty,$
\item $F^{\sigma}_{g_{n,j}}(x) \rightarrow  F^{\sigma}_{g_{n}}(x), \sigma \in \{s,c,u\},  \; \mbox{where} \; j \rightarrow + \infty,$
\end{enumerate}
\end{lemma}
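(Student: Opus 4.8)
The plan is to prove the two assertions in turn: first that for $n$ large every $g_{n,j}$ inherits, by robustness, both the $E$- and the $F$-splitting of $A_n$, and then that as $j\to\infty$ these bundles converge to those of $g_n$ because $g_{n,j}$ and $g_n$ carry identical derivative cocycles along longer and longer orbit arcs, after which domination does the rest.

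For the first assertion I would note that, by Lemma~\ref{LemBDP} together with estimate~\ref{important}, once $n$ is large the map $h_n$, hence also each $h_{n,j}(x)=\varepsilon_j h_n(x/\varepsilon_j)$, is $C^1$-close to $Id$ with a bound independent of $j$ (take the constant $\rho$ in Lemma~\ref{LemBDP} small and then $n$ large). Since $g_{n,j}=g_n\circ h_{n,j}$ and $g_n=A_n\circ H_n$ with $H_n\to_{C^1}Id$, the whole family $\{g_{n,j}\}_{j\ge 0}$ lies in a fixed small $C^1$-neighbourhood of $A_n$. As $A_n$ is absolutely partially hyperbolic for both decompositions~\ref{first} and~\ref{second}, and absolute partial hyperbolicity together with its dominated splittings is a $C^1$-open condition (cone-field criterion), each $g_{n,j}$ is absolutely partially hyperbolic and carries splittings $F^{u}_{g_{n,j}}\oplus F^{c}_{g_{n,j}}\oplus F^{s}_{g_{n,j}}$ and $E^{u}_{g_{n,j}}\oplus E^{c}_{g_{n,j}}\oplus E^{s}_{g_{n,j}}$, all $C^0$-close to the corresponding bundles of $A_n$; this is exactly the content of Lemma~\ref{lema Adp Ponce-Tahzibi}, which I would quote. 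Moreover the invariant cone fields and the domination rate $\mu\in(0,1)$ realising these splittings can be taken common to $A_n$, $g_n$ and all the $g_{n,j}$.

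For the convergence, fix $n$ large and $x\ne p_n$. Because $\bigcap_j V_{n,j-1}=\{p_n\}$, there is $j_0$ with $x\notin V_{n,j-1}$ for every $j\ge j_0$; then the choice of the nested family $\{V_{n,j}\}$ in \S\ref{neigh} gives $g_{n,j}^{k}(x)\notin V_{n,j}$ for $|k|\le j$, and since $g_{n,j}$ coincides with $g_n$, together with its derivative, off $V_{n,j}$, a short induction yields $g_{n,j}^{k}(x)=g_n^{k}(x)$ and $Dg_{n,j}(g_{n,j}^{k}(x))=Dg_n(g_n^{k}(x))$ for $|k|\le j$. I would then invoke the standard cone estimate: for any $g$ in the common $C^1$-neighbourhood and $\sigma\in\{s,c,u\}$, the bundle $E^{\sigma}_g(x)$ (resp.\ $F^{\sigma}_g(x)$) is the exponentially fast limit of the image, under the derivative cocycle along $\{g^{k}(x):|k|\le j\}$, of the fixed cone around the corresponding bundle of $A_n$; so there is a uniform $C>0$ with $\mathrm{dist}(E^{\sigma}_g(x),\Xi^{\sigma}_j(g,x))\le C\mu^{j}$, where $\Xi^{\sigma}_j(g,x)$ depends only on that finite piece of the cocycle. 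Combined with the coincidence above, $\Xi^{\sigma}_j(g_{n,j},x)=\Xi^{\sigma}_j(g_n,x)$, whence $\mathrm{dist}(E^{\sigma}_{g_{n,j}}(x),E^{\sigma}_{g_n}(x))\le 2C\mu^{j}\to 0$ as $j\to\infty$, and likewise for $F^{\sigma}$. Finally, at $x=p_n$ the orbit is the single point $p_n$ and $Dg_{n,j}(p_n)=D_n$ for every $j$; by properties $(1)$--$(4)$ of $D_n$ and the ordering of its eigenvalues, $D_n$ and $Dg_n(p_n)$ share their eigenspaces and the same partition into the $s/c/u$ blocks of both decompositions, so $E^{\sigma}_{g_{n,j}}(p_n)=E^{\sigma}_{g_n}(p_n)$ and $F^{\sigma}_{g_{n,j}}(p_n)=F^{\sigma}_{g_n}(p_n)$ for all $j$ and the convergence there is trivial.

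The step I expect to demand the most care is the uniform cone estimate above: one must check that a single cone field and a single contraction rate $\mu$ work simultaneously for $A_n$, $g_n$ and every $g_{n,j}$, so that $\Xi^{\sigma}_j(g,x)$ really depends only on $2j+1$ consecutive derivatives; this is precisely where the uniform $C^1$-closeness to $A_n$ from the first step enters, and where a little attention is needed for the three-dimensional bundles $E^{cu},E^{cs},F^{cu},F^{cs}$ whose intersections produce $E^c$ and $F^c$. Everything else is routine.
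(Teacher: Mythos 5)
Your proposal is correct and follows essentially the same route as the paper's own proof: first invoke Lemma~\ref{lema Adp Ponce-Tahzibi} with constants depending only on $\Theta$, together with the uniform $C^1$-closeness of $H_n\circ h_{n,j}$ to the identity (scaling preserves the $C^1$-bound), to get both dominated splittings for every $g_{n,j}$; then, for $x\neq p_n$, use that the orbit and derivatives of $g_{n,j}$ and $g_n$ coincide on an arc of length $\sim 2j$ and push the fixed cone fields along that arc to conclude convergence, handling $x=p_n$ separately via $Dg_{n,j}(p_n)=D_n$ sharing eigenspaces with $Dg_n(p_n)$. The only cosmetic difference is that you assert a quantitative exponential rate $C\mu^j$ for the cone contraction, while the paper contents itself with the nesting $E^{cu}_{g_{n,j}}(x)\subset Dg_n^j(g_n^{-j}(x))\bigl(C^{cu}(g_n^{-j}(x),\beta)\bigr)$ and its convergence to $E^{cu}_{g_n}(x)$; both are valid and rest on the same uniform cone-field estimates.
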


\begin{proof}
The existence of two partially hyperbolic decompositions follow, as above, from the Lemma \ref{lema Adp Ponce-Tahzibi}. In fact, for both
decompositions we can take $\varepsilon$ (depends only on $\Theta$) constant, because $\Theta>0$ for $n$ great enough, moreover
$H_n\circ h_{n,j}$ is arbitrarily $C^1-$close of the identity.

%tem uma decomposição parcialmente parcialmente hiperbólica com dimensão central $2$,
%pois $\varepsilon$ pode ser tomado arbitrariamente grande.
%E tem uma decomposição PH com dimensão central $1$, pois neste caso  $\varepsilon$ pode ser tomado
%constante próximo de 3, e $H_n\circ h_{n,j}$ pode ser tomado $C^1$ arbitrariamente próximo da identidade.

Now fix the decomposition $E_{g_{n,j}}^u \oplus E_{g_{n,j}}^c \oplus E_{g_{n,j}}^s,$ for this decompositions the are the cone fields (constructed in the Lemma \ref{lema Adp Ponce-Tahzibi}).

By construction of the family $\{V_{n,j}\}$, we have
$\diam (V_{n,j})\rightarrow 0$ where $j\rightarrow \infty$ and for any $x\neq p_n$,
there is $j_x\geq 0$ such that

\begin{itemize}
  \item[(1)] $x \notin \overline{V_{n,j}}$ for all $j\geq j_x$
  \item[(2)] $g^k_{n,j}(x)=g^k_n(x)$ for all $-j< k< j$ and $j>j_x$
  \item[(3)] $g_{n,j}=g_n$ out of $V_{n,j}$ for all $j>0$
\end{itemize}

For $\sigma=cu$, suppose that $C^{cu}(x,\beta)$ is the cone field in the direction $E^{cu}$, then
$$
E^{cu}_{g_n}(x)=\displaystyle\lim_{k\rightarrow +\infty}
Dg^k_j\left(g^{-k}_n(x)\right)\left(C^{cu}\left(g^{-k}_n(x),\beta\right)\right)
$$
and
$$
\displaystyle\lim_{j\rightarrow +\infty} E^{cu}_{g_{n,j}}(x)=
\displaystyle\lim_{j\rightarrow +\infty}\left(\displaystyle\lim_{k\rightarrow +\infty}Dg^k_{n,j}
\left(g^{-k}_{n,j}(x)\right)\left(C^{cu}\left(g^{-k}_{n,j}(x),\beta\right)\right)\right)
$$

By  cone construction, the sequence of cones $Dg^k_{n,j}(g^{-k}_{n,j}(x))(C^{cu}(g^{-k}_{n,j}(x),\beta))$
is decreasing nested for $k=1,2,3,\ldots$. If $j>j_x$ we have,  by definition of $j_x$, that
$$
E^{cu}_{g_{n,j}}(x)\subset Dg^j_{n,j}
\left(g^{-j}_{n,j}(x)\right)\left(C^{cu}\left(g^{-j}_{n,j}(x),\beta\right)\right)=
Dg^j_{n}
\left(g^{-j}_{n}(x)\right)\left(C^{cu}\left(g^{-j}_{n}(x),\beta\right)\right).
$$

Since
$Dg^j_{n} \left(g^{-j}_{n}(x)\right)\left(C^{cu}\left(g^{-j}_{n}(x),\beta\right)\right) \rightarrow E^{cu}_{g_n}$
where $j\rightarrow \infty$, then
$$
E^{cu}_{g_{n,j}}(x)\rightarrow E^{cu}_{g_n}(x).
$$
Moreover by construction $E^{cu}_{g_{n,j}}(p_n)=E^{cu}_{g_{n}}(p_n)$.

Analogously, we proof for the other directions $E_{g_{n,j}}^{cs}, E_{g_{n,j}}^{u}, E_{g_{n,j}}^{s}$  and also for the other decomposition
$F_{g_{n,j}}^u \oplus F_{g_{n,j}}^c \oplus F_{g_{n,j}}^s$.
\end{proof}

\begin{lemma}\label{Lem Prop 3}
For $n$ and $j$ enough large and for both partially hyperbolic decompositions of $g_{n,j}$ holds:
\begin{itemize}
 \item[(1)] There is $\alpha>0$ such that  $\angle(E^{\sigma}_{g_{n,j}}, E^{\sigma}_{A_n})>\alpha$ and $\angle(F^{\sigma}_{g_{n,j}}, F^{\sigma}_{A_n})>\alpha,$
 \item[(2)] The foliations $W^s(g_{n,j})$ and $W^u(g_{n,j})$ are quasi-isometric in the universal cover,
 \item[(3)] $g_{n,j}$ is dynamically coherent and the foliation $W^c(g_{n,j})$ is quasi-isometric in the universal cover.
  \item[(4)] If  $h$ is the semiconjugacy between $A_n$  and $g_{n,j}$, then $W^c_{g_{n,j}}(x)=h^{-1}(W^c_{A_n}(h(x)))$.
In particular the two dimensional foliation  $W^{c}_{g_{n,j}}$ is not compact.

\end{itemize}
\end{lemma}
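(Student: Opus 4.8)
The plan is to deduce all four items from two facts already established: the $C^{1}$-proximity of $g_n$ to $A_n$, and the convergences $E^{\sigma}_{g_{n,j}}(x)\to E^{\sigma}_{g_n}(x)$ and $F^{\sigma}_{g_{n,j}}(x)\to F^{\sigma}_{g_n}(x)$ as $j\to\infty$ furnished by Lemma \ref{convergence}. Putting these together, for $n$ and $j$ large each sub-bundle $E^{\sigma}_{g_{n,j}}$, $F^{\sigma}_{g_{n,j}}$ lies in an arbitrarily thin cone around the corresponding \emph{constant} sub-bundle of $A_n$; and by Remark \ref{simple} the latter are, for $n$ large, as close as we please to coordinate subspaces of $\mathbb{R}^{4}$, hence pairwise uniformly transverse. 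This is what item $(1)$ records, for both decompositions with one $\alpha>0$, and the same closeness also delivers the quantitative input needed below: a uniform positive lower bound for the angle between each lifted sub-bundle and the orthogonal complement of the relevant $A_n$-eigenspace sum. Here I would be careful that one cone aperture, and one threshold for the index $j$, work uniformly over the relevant range of $n$; this is exactly the $j$-independence of the cone fields in Lemma \ref{convergence}, whose aperture depends only on the dominated splitting of $A_n$.

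Item $(2)$ is then one application of Brin's Proposition \ref{B1}. In the universal cover $\widetilde{W}^{s}(g_{n,j})$ is the one-dimensional foliation tangent to $\widetilde{E}^{s}_{g_{n,j}}$, which stays in a thin cone about the fixed line $E^{s}_{A_n}$; with $P=(E^{s}_{A_n})^{\bot}$ one has $T_{x}\widetilde{W}^{s}\cap P=\{0\}$ and $\angle(T_{x}\widetilde{W}^{s},P)>\beta>0$ uniformly, so Proposition \ref{B1} yields quasi-isometry of $\widetilde{W}^{s}(g_{n,j})$; the same argument with $E^{u}_{A_n}$ handles $\widetilde{W}^{u}(g_{n,j})$. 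Item $(3)$ follows at once: quasi-isometry of $W^{s}$ and $W^{u}$ gives dynamical coherence of $g_{n,j}$ via Theorem \ref{B2}, so $E^{c}_{g_{n,j}}$ integrates to $W^{c}_{g_{n,j}}$; its lift is the two-dimensional foliation tangent to $\widetilde{E}^{c}_{g_{n,j}}$, which lies in a thin cone about the plane $E^{c}_{A_n}$, and Proposition \ref{B1} with $P=(E^{c}_{A_n})^{\bot}$ shows $\widetilde{W}^{c}(g_{n,j})$ is quasi-isometric.

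For item $(4)$: by Lemma \ref{Lem Prop 1}, $g_{n,j}$ is homotopic to $A_n$, and being $C^{1}$-close to the partially hyperbolic $A_n$ it is isotopic to $A_n$ along a path of partially hyperbolic diffeomorphisms, so Theorem \ref{Teo PFS} applies and gives $W^{c}_{g_{n,j}}(x)=h^{-1}(W^{c}_{A_n}(h(x)))$, with $h$ the semiconjugacy. To obtain non-compactness I would first check that $p_n$ is irreducible over $\mathbb{Q}$ for $n$ large: it has no rational root, since $p_n(1)=-1$ and $p_n(-1)=8n+11\neq 0$, and it does not split into two monic integer quadratics, since in every pairing of the four roots neither product equals $\pm 1$ for large $n$ (the two products tend to $0,\infty$ or to $\tfrac13,3$). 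Consequently $E^{c}_{A_n}$ is not a rational $2$-plane — a rank-$2$ $A_n$-invariant sublattice inside it would force a rational quadratic factor of $p_n$ — so the leaf $W^{c}_{A_n}(y)$ through any $y$ is a non-closed, hence non-compact, immersed submanifold of $\mathbb{T}^{4}$. Since $h$ is onto it carries the leaf $W^{c}_{g_{n,j}}(x)$ onto $W^{c}_{A_n}(h(x))$; a compact leaf would have compact image, a contradiction, so $W^{c}_{g_{n,j}}$ is not a compact foliation.

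The step I expect to be the main obstacle is the uniformity in $(1)$: the cone around each $A_n$-sub-bundle must be thin enough, and the entry index $j$ controlled, uniformly over the relevant range of $n$, which means leaning honestly on the $j$-uniform cone fields of Lemma \ref{convergence} rather than merely quoting them, and keeping track that $A_n$ has a genuine spectral gap for $n$ large. The remainder is routine — items $(2)$ and $(3)$ are single invocations of Propositions \ref{B1} and \ref{B2}, and the number-theoretic check in $(4)$, though it must be carried out with some care, is elementary.
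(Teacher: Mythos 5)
Your proof follows the paper's own route for items (1)--(3) almost verbatim: item (1) from the convergence furnished by Lemma~\ref{convergence}, item (2) from item (1) and Brin's Proposition~\ref{B1}, item (3) from Theorem~\ref{B2} plus a second application of Proposition~\ref{B1}. The two diverge in the treatment of item (4). The paper makes the isotopy completely explicit: it takes the linear path $I_t=(1-t)Id + t(H_n\circ h_{n,j})$, invokes Lemma~\ref{lema Adp Ponce-Tahzibi} from the appendix to show that each $A_n\circ I_t$ is (absolutely) partially hyperbolic, and concludes that $g_{n,j}$ sits in the partially hyperbolic path component of $A_n$ so that Theorem~\ref{Teo PFS} applies. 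You assert the existence of such a path but do not construct it; quoting the appendix lemma and the explicit segment $I_t$, as the paper does, is the clean way to make this rigorous, so you should not skip it.

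On the other hand, you supply something the paper leaves implicit: a genuine argument for the final clause ``$W^{c}_{g_{n,j}}$ is not compact.'' The paper presents this as an immediate consequence of item (4) with no further explanation. Your number-theoretic check is correct and worth recording: $p_n$ has no rational roots since $p_n(1)=-1$ and $p_n(-1)=8n+11$, and for $n$ large there is no factorization into monic integer quadratics because the three possible pairings of roots have products tending to $0,\infty$, to $0,\infty$, and to $\tfrac13,3$, never to $\pm1$. Hence $p_n$ is irreducible over $\mathbb{Q}$, a rational $A_n$-invariant $2$-plane would force a quadratic factor, so $E^c_{A_n}$ is irrational, the leaves of $W^c_{A_n}$ are non-closed hence non-compact, and since $h$ carries $W^c_{g_{n,j}}(x)$ onto $W^c_{A_n}(h(x))$, no leaf of $W^c_{g_{n,j}}$ can be compact. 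This fills a real gap in the paper's exposition and is a useful addition.
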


\begin{proof}
The item (1) follows of the Lemma \ref{convergence}. The item (2) follows of the item (1) and Proposition \ref{B1}.
The item (2) together with
the Theorem \ref{B2} implies that $g_{n,j}$ is dynamically coherent and again by item (1) and
Proposition \ref{B1} implies that $W^c(g_{n,j})$ is quasi-isometric in the universal cover. The item (4) follows of the Theorem \ref{Teo PFS}. In fact, since $H_n \circ h_{n,j} $ is arbitrarily close to $Id$ in the $C^1-$topololy, its combined with the Lemma 6.2 of the appendix, leads us to each $A_n \circ I_t, t \in [0,1],$ is partially hyperpolic, where $I_t = (1-t)Id + t(H_n \circ h_{n,j}).$ It is clear, because if $H_n \circ h_{n,j}$ is $C^1-$close to $Id,$ the $I_t,$ so are. The path $t \mapsto A_n \circ I_t$ is fully contained in the partially hyperbolic path connect component of $A_n,$ we are able to apply Theorem \ref{Teo PFS}.
\end{proof}

\subsection{Conclusion of the proof of Theorem C}

Fix a large $n$ and $x \neq p_n.$ Since $E^c_{g_{n,j}}(x) \rightarrow E^{c}_{g_n}(x) $ and $g_{n,j} = g_{n}$ out of $V_{n,j},$  thus  for every $j > j_x$ we have \mbox{$Dg_{n,j}(x) =   Dg_{n}(x),$}   then by Lemma \ref{convergence}

$$\jac g_{n,j}(x)| E^c_{g_{n,j}} \rightarrow \jac g_n(x)| E^c_{g_n}(x).$$

Since $H_n$ and $(H_n \circ h_{n,j})$ are $C^1$ close to  $Id$  and $g_{n,j} = A_n \circ H_n \circ h_{n,j},$ we have $||Dg_{n,j}|| < 20n$ for large $n,$ then by dominated convergence:

%\begin{equation}\label{integral3}
%\int_{\mathbb{T}^4 } \log( \jac g_{n,j}(x)| F^c_{g_{n,j}})dm \rightarrow \int_{\mathbb{T}^4}\log( \jac g_{n}(x)| F^c_{g_n}) dm >  0,
%\end{equation}
%and
\begin{equation}\label{last}
\int_{\mathbb{T}^4 } \log(\jac g_{n,j}(x)| E^c_{g_{n,j}})dm \rightarrow \int_{\mathbb{T}^4}\log(\jac g_{n}(x)| E^c_{g_n}) dm >  \int_{\mathbb{T}^4}\log( \jac A_{n}(x)| E^c_{n})dm.
\end{equation}

The DA-diffeomorphisms $g_{n,j}$ are not Anosov. In fact, if $g_{n,j}$ was Anosov, then they would be conjugated to $A_n,$ in particular the stable index of fixed points of $A_n$ and $g_{n,j}$ would be the same.

Take a volume  preserving partially hyperbolic diffeomorphism $g_{n,j}$ satisfying

\begin{equation*}
\int_{\mathbb{T}^4 } \log(\jac g_{n,j}(x)| E^c_{g_{n,j}})dm  >  \int_{\mathbb{T}^4}\log(\jac A_{n}(x)| E^c_{n})dm.
\end{equation*}

it is possible by the expression $(\ref{last}).$

Since $g_{n,j}$ is not Anosov, we have $g_{n,j} \in \partial (\overline{\mathcal{A}(\mathbb{T}^4)}) \cup DA^r_m(\mathbb{T}^4) \setminus \overline{\mathcal{A}(\mathbb{T}^4)},$ in both cases it is possible to perturb $g_{n,j}$ to an $f_n \in DA^r_m(\mathbb{T}^4) \setminus \overline{\mathcal{A}(\mathbb{T}^4)}, r \geq 2,$ an stably ergodic partially hyperbolic diffeomorphism, such that
\begin{equation*}
\int_{\mathbb{T}^4 } \log(\jac f_{n}(x)| E^c_{f_{n}})dm  >  \int_{\mathbb{T}^4}\log(\jac A_{n}(x)| E^c_{n})dm.
\end{equation*}

%The existence and uniquely integrability of $E^c_{f_{n}}$ is due by Proposition \ref{B1} and  Theorem \ref{B2} and Lemma \ref{convergence}. The same holds for any $f$ that is $C^1-$close to $f_n.$

The existence of such $f_n$  is due to the   continuity of the structure
\mbox{$F^u \oplus F^c \oplus F^s,$}
$\dim F^c = 1$ and  Theorems A and B in \cite{HHU}. Now consider \mbox{$U \subset DA^r_m(\mathbb{T}^4) \setminus \overline{\mathcal{A}(\mathbb{T}^4)}$}, $r \geq 2, $ an small $C^1-$neighborhood of $f_n.$ If we take $U$ a suitable neighborhood, $f_n,$ such that any $f \in U$ is ergodic, isotopic to $A_n,$ and

\begin{equation}\label{finalmente2}
\lambda^c_1(f) + \lambda^c_2(f)  = \int_{\mathbb{T}^4 } \log(\jac f(x)| E^c_{f})dm  > \log(\beta^c_{1,n} ) + \log(\beta^c_{2,n} ).
\end{equation}
%by continuity $f_n$ and $f$ can be chosen such that
%\begin{equation}\label{finalmente3}
%\lambda^c_1(f) = \int_{\mathbb{T}^4 } \log(\jac f(x)| F^c_f)dm  >  0,
%\end{equation}
%since $\beta^c_{2,n} \rightarrow 3,$ then for large $n$ we have
%\begin{equation}\label{finalmente4}
%\lambda^c_2(f) > \log(2,99) + \log(\beta^c_{1,n} )  - \lambda^c_1(f) > \log(2,9).
%\end{equation}

Equation $(\ref{finalmente2}),$  joint with Theorem B implies that the bi-dimensional center foliation tangent to $E^c_f$ is not absolutely continuous.

%When $g_{n,j} \in  DA^r_m(\mathbb{T}^4) \setminus \overline{\mathcal{A}(\mathbb{T}^4)}, r \geq 2,$  is a non stably ergodic partially hyperbolic diffeomorphism, we can apply the same argument as above. Finally, when $g_{n,j}\in  DA^r_m(\mathbb{T}^4) \setminus \overline{\mathcal{A}(\mathbb{T}^4)}, r \geq 2,$  is an stably ergodic partially hyperbolic diffeomorphism, the argument holds with $f_n = g_{n,j}$ and $U$ as above.

%--------------------------------------------------
\section{Appendix: Cone Conditions}
%--------------------------------------------------

In this section we adapt a result of appendix of  \cite{PT}
from ${\T}^3$ to ${\T}^d$ using all the notation.
As in \cite{PT} we remember the equivalent definition of absolutely partially hyperbolic diffeomorphism by discretion of the cone.

\begin{definition}
Given an orthogonal splitting of the tangent bundle of $M$, $TM=E\oplus F$, and a constat $\beta>0$,
for any $x\in M$ we define the cone centered in $E(x)$ with angle $\beta$ as
$$
C(E,x,\beta)=\{v\in T_xM:\, ||v_F||\leq\beta ||v_E||,\, \mbox{onde}\,v=v_E+v_F, v_E\in E(x), v_F\in F(x)  \}.
$$
\end{definition}

Given a partially hyperbolic diffeomorphism   $f:{\T}^d\rightarrow{\T}^d$ with invariant splitting $TM=E^s\oplus E^c\oplus E^u$,
there is an adapted inner product (and then an adapted norm) with respect to which the splitting is orthogonal (see \cite{pesin2004lectures}).
Thus, given $\beta>0$ we can define standard families of cones centered  on the fiber bundles $E^{\sigma}(x)$ with angle $\beta>0$,
$C^{\sigma}(x,\beta)$, $\sigma= s,c,u,cs,cu$.

Consider $f:M\rightarrow M$ an absolutely partially hyperbolic diffeomorphism. Using an adapted norm
$||\cdot||$, we can consider the invariant splitting \mbox{$TM=E^s\oplus E^c\oplus E^u$}
As being an orthogonal splitting, and there are numbers
$$
0<\lambda_1\leq \mu_1<\lambda_2\leq\mu_2<\lambda_3\leq\mu_3,\,\,\mu_1<1,\,\,\lambda_3>1
$$
for which
\begin{align*}
\lambda_1\leq||Df(x)|E^s(x)||\leq\mu_1,\\
\lambda_2\leq||Df(x)|E^c(x)||\leq\mu_2,\\
\lambda_3\leq||Df(x)|E^u(x)||\leq\mu_3.
\end{align*}

Partial hyperbolicity can be described in terms of invariant cone families (see \cite{pesin2004lectures}, pg. 15).
More specifically, let $f:{\T}^d\rightarrow{\T}^d$ be a partially hyperbolic diffeomorphism and let
$$
T_x{\T}^d=E^s(x)\oplus E^c(x)\oplus E^u(x)
$$
a orthogonal splitting of $T{\T}^d$. Give $\beta>0$ define the families of cones

\begin{align*}
&C^s(x,\beta)=C(x,E^s(x), \beta),\,\,\,\,\,\,\,\,\,\,\,\,\,\,\,C^u(x,\beta)=C(x,E^u(x), \beta),\\
&C^{cs}(x,\beta)=C(x,E^{cs}(x), \beta),\,\,\,\,\,\,\,\,\,\,\,C^{cu}(x,\beta)=C(x,E^{cu}(x), \beta),
\end{align*}

where $E^{cs}(x)=E^c\oplus E^s(x),\mbox{e} E^{cu}(x)=E^c\oplus E^u(x)$. Then $f$ is absolutely partially hyperbolic if, and only if, there is $0<\beta<1$ and constants
$0< \mu_1<\lambda_2\leq\mu_2<\lambda_3$ with $\mu_1<1$ and $\lambda_3>1$ such that
\begin{align}\label{eq C1}
Df^{-1}(x)(C^{\sigma}(x,\beta))&\subset C^{\sigma}(f^{-1}(x),\beta),\,\, \sigma=s,cs, \nonumber\\
Df(x)(C^{\psi}(x,\beta))&\subset C^{\psi}(f(x),\beta),\,\, \psi=u,cu
\end{align}
and
\begin{align}\label{eq C2}
||Df^{-1}(x)v||&>\mu_1^{-1}||v||,\,\, v\in C^s(x,\beta), \nonumber\\
||Df^{-1}(x)v||&>\mu_2^{-1}||v||,\,\, v\in C^{cs}(x,\beta), \nonumber\\
||Df(x)v||&>\lambda_3||v||,\,\, v\in C^{u}(x,\beta), \\
||Df(x)v||&>\lambda_2||v||,\,\, v\in C^{cu}(x,\beta). \nonumber
\end{align}
For the linear case we find an explicit relation between the angle of the invariant cones families and the ratio of domination
between unstable, stable and central bundles.

Consider $f:{\T}^d\rightarrow{\T}^d$ a linear partially hyperbolic diffeomorphism and denote by
$\lambda_1^s,\lambda_2^s,\ldots,\lambda_i^s,\lambda_1^c\ldots,\lambda_j^c,\lambda_1^u,\ldots,\lambda_{k-1}^u,\lambda_k^u,$
its eigenvalues, where
$$
|\lambda_1^s|<|\lambda_2^s|<\ldots<|\lambda_i^s|<|\lambda_1^c|<\ldots<|\lambda_j^c|<|\lambda_1^u|<\ldots<|\lambda_{k-1}^u|<|\lambda_k^u|\,\,\,
\mbox{with}\,\,\, |\lambda_i^s|<1<|\lambda_1^u|
$$
Consider
$$
\Theta:=\min\left\{\frac{\lambda_1^c}{\lambda^s_i},\frac{\lambda^u_1}{\lambda^c_j}\right\}.
$$
We can choose a constant $\beta>0$ such that
\begin{equation}\label{eq C3}
1<(1+\beta)^2<\Theta.
\end{equation}
By definition of $\beta$ we have
$$
(1+\beta)|\lambda_i^s|<\frac{|\lambda_1^c|}{1+\beta}<(1+\beta)|\lambda_j^c|<\frac{|\lambda_1^u|}{1+\beta}
$$
We can find constants $\mu_1,\lambda_2,\mu_2,\lambda_3$ such that
\begin{equation}\label{eq C4}
(1+\beta)|\lambda_i^s|<\mu_1<\lambda_2<\frac{|\lambda_1^c|}{1+\beta}<(1+\beta)|\lambda_j^c|<\mu_2<\lambda_3<\frac{|\lambda_1^u|}{1+\beta}
\end{equation}
where ${\mu_1<1<\lambda_3}$

Now, we go to check that with the constants defined by \ref{eq C3} and \ref{eq C4}, the families of stable, unstable, center-stable and center-unstable cones satisfies \ref{eq C1} and \ref{eq C2}.
%We will do for each of the cone:

$\bullet$ Let $v=v_{s}+v_{cu}\in C^{cu}(x,\beta)$, then
$$
||Df(x)v_{s}||\leq|\lambda_i^s|||v_{s}||\leq |\lambda_i^s|\beta||v_{cu}||<\beta|\lambda_1^c|||v_{cu}||\leq\beta||Df(x)v_{cu}||
$$
This is $Df(x)(C^{cu}(x,\beta))\subset C^{cu}(f(x),\beta)$.\\

Furthermore
$$
||Df(x)v||^2\geq||Df(x)v_{cu}||^2\geq(\lambda_1^c)^2||v_{cu}||^2.
$$
By \ref{eq C4} we have that $|\lambda_1^c|>(1+\beta)\lambda_2$. Hence,
\begin{align*}
||Df(x)v||^2>(1+\beta)^2(\lambda_2)^2||v_{cu}||^2&\geq(\lambda_2)^2(||v_{cu}||^2+\beta^2||v_{cu}||^2)\\
                                                 &\geq (\lambda_2)^2(||v_{cu}||^2+||v_{s}||^2)=(\lambda_2||v||)^2
\end{align*}
Then $||Df(x)v||>\lambda_2||v||$.

Similarly we do to $C^{cs}(x,\beta)$, $C^{u}(x,\beta)$ and $C^{s}(x,\beta)$.

 The next lemma is completely analogous to (Lemma 6.1, \cite{PT}), as well as its demonstration. This lemma show the size of the $C^1$-neighborhood of the linear partially hyperbolic diffeomorphism such that all diffeomorphism in this neighborhood are absolutely partially hyperbolic diffeomorphism.

\begin{lemma}\label{lema Adp Ponce-Tahzibi}
Let $f:{\T}^d\rightarrow{\T}^d$ be a linear partially hyperbolic diffeomorphism with eigenvalues $\lambda_1^s,\lambda_2^s,\ldots,\lambda_i^s,\lambda_1^c\ldots,\lambda_j^c,\lambda_1^u,\ldots,\lambda_{k-1}^u,\lambda_k^u,$ where
$$
|\lambda_1^s|<|\lambda_2^s|<\ldots<|\lambda_i^s|<|\lambda_1^c|<\ldots<|\lambda_j^c|<|\lambda_1^u|<\ldots<|\lambda_{k-1}^u|<|\lambda_k^u|\,\,\,
\mbox{com}\,\,\, |\lambda_i^s|<1<|\lambda_1^u|,
$$
then there is a constant $\varepsilon>0$ such that, for any diffeomorphism $g:{\T}^d\rightarrow{\T}^d$ with $||g-Id||_{C^1}<\varepsilon$
(adapted norm of $f$), the composition $f\circ g$ is an absolutely partially hyperbolic diffeomorphism.
The constant $\varepsilon$ depends only on
$\Theta:=\min\left\{\frac{\lambda_1^c}{\lambda^s_i},\frac{\lambda^u_1}{\lambda^c_j}\right\}$.
\end{lemma}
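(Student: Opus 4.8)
The plan is to verify the invariant–cone criterion \eqref{eq C1}--\eqref{eq C2} for the composition $f\circ g$, exploiting that those conditions hold for the linear map $f$ with a definite slack that can be expressed solely in terms of $\Theta$. Since $f$ is linear, $D(f\circ g)(x)=Df\circ Dg(x)$ for every $x$; writing $Dg(x)=\mathrm{Id}+E(x)$ with $\sup_x\|E(x)\|<\varepsilon$ in the adapted norm of $f$, a vector $v\in C^{\psi}(x,\beta)$ is sent by $Dg(x)$ to $Dg(x)v\in C^{\psi}(x,\beta+c\varepsilon)$, where $c$ depends only on $\beta$ (through the bound $\|v\|^2=\|v_E\|^2+\|v_F\|^2\le(1+\beta^2)\|v_E\|^2$ valid inside the cone). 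The essential point is that $Df$ is applied \emph{after} this perturbation, and the action of $Df$ on cones --- as well as its expansion/contraction estimates inside cones, already checked in the excerpt for the linear case --- involves only ratios of eigenvalue moduli across the splitting, hence only $\Theta$ (and $\beta$, itself extracted from $\Theta$ via \eqref{eq C3}); the possibly large operator norm $\|Df\|$ never enters.

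Then, cone by cone, I would rerun the computations already carried out for $f$ with $\beta$ replaced by $\beta+c\varepsilon$. For instance, for $v\in C^{cu}(x,\beta)$, setting $w=Dg(x)v=w_s+w_{cu}$ with $\|w_s\|\le(\beta+c\varepsilon)\|w_{cu}\|$, one obtains $\|(Df\,w)_{E^s}\|\le|\lambda_i^s|(\beta+c\varepsilon)\|w_{cu}\|$ and $\|(Df\,w)_{E^{cu}}\|\ge|\lambda_1^c|\|w_{cu}\|$, so $D(f\circ g)(x)v\in C^{cu}\big((f\circ g)(x),(\beta+c\varepsilon)|\lambda_i^s|/|\lambda_1^c|\big)$; since $|\lambda_i^s|/|\lambda_1^c|\le\Theta^{-1}<1$, and $\beta/\Theta<\beta$, for $\varepsilon$ small (depending only on $\beta$ and $\Theta$) this is contained in $C^{cu}\big((f\circ g)(x),\beta\big)$, which is the invariance in \eqref{eq C1}. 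The same computation, using the spacings in \eqref{eq C4}, gives $\|D(f\circ g)(x)v\|\ge\lambda_2'\|v\|$ with $\lambda_2'=\lambda_2(1-O(\varepsilon))$; the analogous arguments for $C^{cs}$ (again using $Df$), and for $C^{u}$, $C^{s}$ (using the estimates for $Df^{-1}$ together with $Dg^{-1}=\mathrm{Id}+\widetilde E$, $\|\widetilde E\|<2\varepsilon$, which are equally robust), yield constants $\mu_1',\lambda_2',\mu_2',\lambda_3'$ still obeying $\mu_1'<\lambda_2'\le\mu_2'<\lambda_3'$ and, because $\mu_1<1<\lambda_3$ held for $f$ with a gap coming from \eqref{eq C4}, still $\mu_1'<1<\lambda_3'$ once $\varepsilon$ is small. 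This is precisely the family of inequalities \eqref{eq C1}--\eqref{eq C2}, so $f\circ g$ is absolutely partially hyperbolic.

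Finally I would collect all the smallness conditions imposed on $\varepsilon$ along the way and check that each threshold was written purely in terms of $\beta$ and eigenvalue ratios bounded by $\Theta^{-1}$; since $\beta$ was produced from $\Theta$ alone through \eqref{eq C3}, the resulting $\varepsilon$ depends only on $\Theta$, not on the dimension $d$, nor on the individual eigenvalues, nor on $\|Df\|$. I expect this last bookkeeping --- verifying that the threshold is \emph{genuinely} uniform in $\Theta$ --- to be the only delicate point, the openness of the cone conditions themselves being routine. The whole argument runs parallel to (Lemma 6.1, \cite{PT}); I would follow that proof, replacing $\mathbb{T}^3$ by $\mathbb{T}^d$ and using the notation set up above.
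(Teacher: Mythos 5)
Your proposal is correct and follows essentially the same route as the paper: verify the invariant-cone criterion for $f\circ g$ by composing the strict angle-contraction of $Df$ on cones (which costs only eigenvalue ratios, hence only $\Theta$ through $\beta$ via \eqref{eq C3}--\eqref{eq C4}) with the $O(\varepsilon)$ angle-widening of $Dg$, and then reconfirm the expansion/contraction rates inside cones. The paper packages the same mechanism slightly differently — it fixes $\gamma:=\max\{\mu_2/\lambda_3,\mu_1/\lambda_2\}<1$, shows $Df$ maps angle-$\beta$ cones into angle-$\gamma\beta$ cones, and then picks $\varepsilon$ so that $Dg$ maps angle-$\gamma\beta$ cones back into angle-$\beta$ cones, plus introduces constants $l,L$ with $l\|v\|\le\|Dg(x)v\|\le L\|v\|$, $L\mu_1<1<l\lambda_3$, $l/L>\gamma$ for the rate estimates — but this is only a difference in bookkeeping, not in substance; the delicate point you flag at the end (that the smallness threshold should really involve only ratios controlled by $\Theta$, not $\|Df\|$ or the absolute eigenvalue sizes) is exactly the place where one must be careful, and the paper handles it the same way, by tracing the thresholds back to $\beta$ and $\gamma$.
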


\begin{proof}
Let $f$ be as in the statement and denote the invariant splitting of $f$ by $T_xM=E^s(x)\oplus E^c(x)\oplus E^u(x)$.
Consider  the adapted norm $||\cdot||$ in a way that the invariant splitting is orthogonal. Now, since $f$ is a linear
partially hyperbolic diffeomorphism we can choose constants
$$
0<\lambda_1\leq \mu_1<\lambda_2\leq\mu_2<\lambda_3\leq\mu_3,\,\,\mu_1<1,\,\,\lambda_3>1
$$
and let $\beta$ as defined in the equations \ref{eq C3} e \ref{eq C4}.

For $v\in T_xM$ we can write $v=v_s+v_c+v_u$ with $v_{\sigma}\in E^{\sigma}(x), \sigma=s,c,u.$\\

$\bullet$ If $v\in C^u(x,\beta)$, then $||v_{cs}||\leq\beta||v_{u}||$, where $v_{cs}=v_{s}+v_{c}$, thus
$$
||Df(x)v_{cs}||\leq\mu_2||v_{cs}||\leq\mu_2\beta||v_{u}||\leq\mu_2\beta(\lambda_3)^{-1}||Df(x)v_{u}||,
$$
this is, $Df(x)(C^u(x,\beta))\subset C^u(f(x),(\mu_2/\lambda_3)\beta)$.

$\bullet$ If $v\in C^{cu}(x,\beta)$, then $||v_{s}||\leq\beta||v_{cu}||$, where $v_{cu}=v_{c}+v_{u}$, thus
$$
||Df(x)v_{s}||\leq\mu_1||v_{s}||\leq\mu_1\beta||v_{cu}||\leq\mu_1\beta(\lambda_2)^{-1}||Df(x)v_{cu}||,
$$
this is, $Df(x)(C^{cu}(x,\beta))\subset C^u(f(x),(\mu_1/\lambda_2)\beta)$.

$\bullet$ If $v\in C^s(x,\beta)$, then $||v_{cu}||\leq\beta||v_{s}||$, where $v_{cu}=v_{c}+v_{u}$, thus
$$
||Df^{-1}(x)v_{cu}||\leq\lambda_2^{-1}||Df\circ Df^{-1}(x) v_{cu}||=\lambda_2^{-1}||v_{cu}||\leq\lambda_2^{-1}\beta||v_{s}||\leq\lambda_2^{-1}\beta\mu_1||Df^{-1}(x)v_{s}||,
$$
this is, $Df^{-1}(x)(C^s(x,\beta))\subset C^s(f^{-1}(x),(\mu_1/\lambda_2)\beta)$.

$\bullet$ If $v\in C^{cs}(x,\beta)$, then $||v_{u}||\leq\beta||v_{cs}||$, where $v_{cs}=v_{c}+v_{s}$, thus
$$
||Df^{-1}(x)v_{u}||\leq\lambda_3^{-1}||Df\circ Df^{-1}(x) v_{u}||=\lambda_3^{-1}||v_{u}||\leq\lambda_3^{-1}\beta||v_{cs}||\leq\lambda_3^{-1}\beta\mu_2||Df^{-1}(x)v_{cs}||,
$$
this is, $Df^{-1}(x)(C^{cs}(x,\beta))\subset C^{cs}(f^{-1}(x),(\mu_2/\lambda_3)\beta)$.

Define $\gamma:=\max\left\{\frac{\mu_2}{\lambda_3},\frac{\mu_1}{\lambda_2}\right\}<1$. So we have
\begin{align*}
Df(x)(C^{\sigma}(x,\beta))&\subset C^{\sigma}(f(x),\gamma\beta),\,\,\sigma=u,cu,\\
Df^{-1}(x)(C^{\psi}(x,\beta))&\subset C^{\psi}(f^{-1}(x),\gamma\beta),\,\,\psi=s,cs.
\end{align*}
Observe that by equations \ref{eq C3} and \ref{eq C4}, $\beta$ and $\gamma$ depends only on the rations
$\frac{\lambda_1^c}{\lambda^s_i}$ and $\frac{\lambda^u_j}{\lambda^c_1}$.

Since the invariant splitting is constant, we can take an $\varepsilon>0$ depending only on the ratios
$\frac{\lambda_1^c}{\lambda^s_i}$ and $\frac{\lambda^u_j}{\lambda^c_1}$ such that if $||g-Id||_0<\varepsilon$, then
\begin{align*}
Dg(x)(C^{\sigma}(x,\gamma\beta))&\subset C^{\sigma}(g(x),\beta),\,\,\sigma=u,cu,\\
Dg^{-1}(x)(C^{\psi}(x,\gamma\beta))&\subset C^{\psi}(g^{-1}(x),\beta),\,\,\psi=s,cs
\end{align*}
let
$$
L<\frac{1}{\mu_1},\,\,l>\frac{1}{\lambda_3},\,\,\frac{l}{L}>\gamma,\,\,
$$
with
$$
l||v||\leq||Dg(x)v||\leq L||v||.
$$
Thus we have (see Figure \ref{Cones2})
\begin{align*}
D(f\circ g)(x)(C^{\sigma}(x,\gamma\beta))&\subset C^{\sigma}(f(g(x)),\gamma\beta),\,\,\sigma=u,cu,\\
D(f\circ g)^{-1}(x)(C^{\psi}(x,\beta))&\subset C^{\psi}(g^{-1}(f^{-1}(x)),\beta),\,\,\psi=s,cs
\end{align*}
\begin{figure}[!htb]
\centering
\includegraphics[scale=0.8]{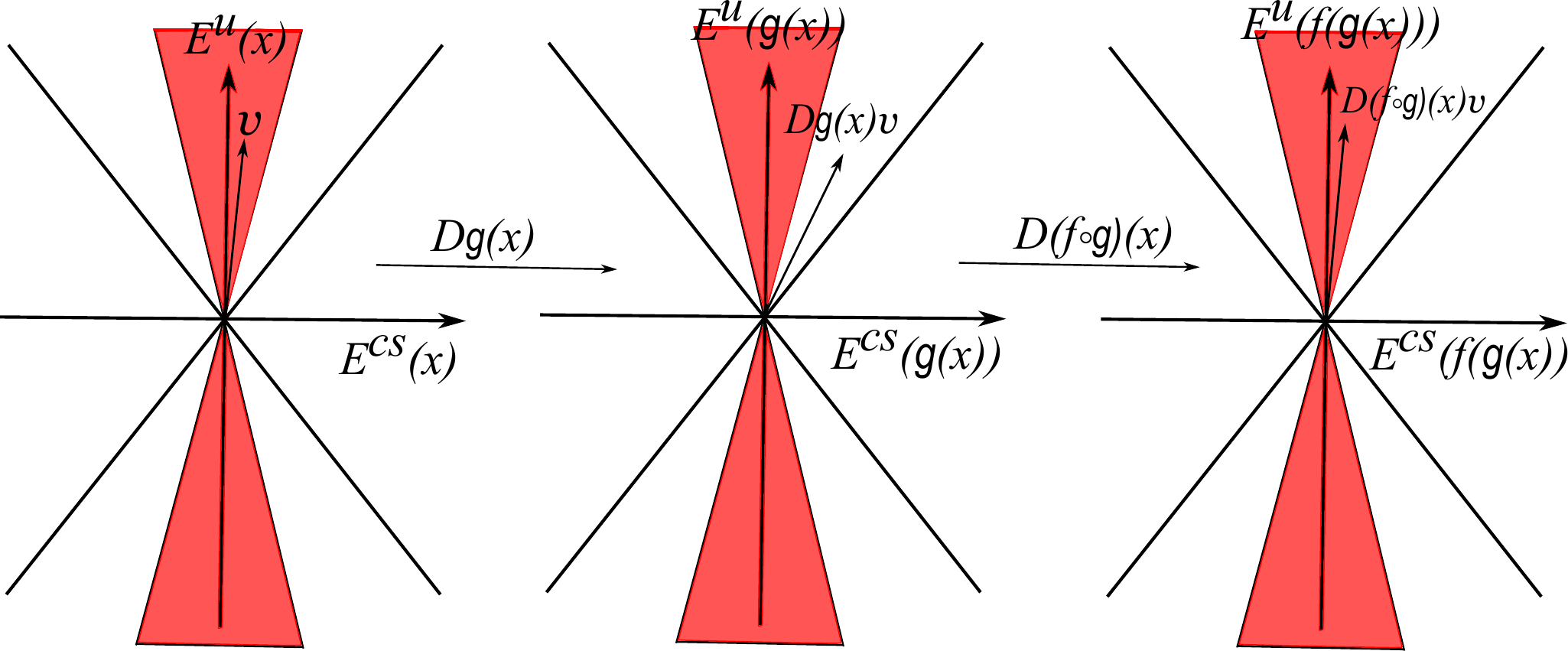}
\caption{}\label{Cones2}
\end{figure}

Now, we need to show uniform contraction and expansion on these families of cones.\\

$\bullet$ If $v\in C^u(x,\gamma\beta)$, then
$$
||D(f\circ g)(x)v||\geq\lambda_3||Dg(x)v||\geq\lambda_3l||v||.
$$
$\bullet$ If $v\in C^{cs}(x,\beta)$, then
$$
||D(f\circ g)^{-1}(x)v||\geq L^{-1}||Df^{-1}(x)v||>L^{-1}\mu_2^{-1}||v||.
$$
$\bullet$ If $v\in C^{cu}(x,\gamma\beta)$, then
$$
||D(f\circ g)(x)v||\geq \lambda_2||Dg(x)v||\geq \lambda_2l||v||.
$$
$\bullet$ If $v\in C^{s}(x,\beta)$, then
$$
||D(f\circ g)^{-1}(x)v||\geq L^{-1}||Df^{-1}(x)v||>L^{-1}\mu_1^{-1}||v||.
$$
Furthermore,
$$
0<L\mu_1<l\lambda_2\leq L\mu_2<l\lambda_3,\,\, \mbox{with}\,\, L\mu_1<1\,\,,\,\,l\lambda_3>1,
$$
so that $f\circ g$ is absolutely partially hyperbolic as we claimed.
\end{proof}

%%--------------------------------------------------------------------------------------
%
%\bibliographystyle{abbrv}
%
%\bibliography{referencesCostaMicena}
%%--------------------------------------------------------------------------------------
%

\end{document}